\def\Z{\mathbb Z}
\def\R{\mathbb R}
\def\C{\mathbb C}
\def\J{\mathcal J}
\def\I{\mathcal I}
\def\F{F}
\def\H{\mathbb H}
\def\Norm{\mathcal{N}}
\def\L{\mathcal L}
\def\Im{\operatorname{Im}}
\def\Re{\operatorname{Re}}
\def\tr{\operatorname{tr}}
\def\res{\operatorname{res}}
\def\covol{\operatorname{covol}}
\def\vol{\operatorname{vol}}
\def\PSL{\operatorname{PSL}}
\def\log{\operatorname{log}}
\numberwithin{equation}{section}
\theoremstyle{plain}
\newtheorem{theorem}{Theorem}[section]
\newtheorem{lemma}[theorem]{Lemma}
\newtheorem{proposition}[theorem]{Proposition}
\theoremstyle{remark}
\theoremstyle{definition}
\theoremstyle{definition}
\title{A Jensen--Rohrlich type formula for the hyperbolic 3-space}
\author[Herrero]{S. Herrero}
\address{
Sebasti\'an Herrero,
Department of Mathematical Sciences, 
Chalmers University of Technology and University of Gothenburg,
SE-412 96 Gothenburg, 
Sweden.}
\email{sebastian.herrero.m@gmail.com}
\author[Imamo\={g}lu]{\"O. Imamo{\={g}}lu}
\address{
\"Ozlem Imamo{\={g}}lu,
Department of Mathematics, 
ETH Z\"urich,
R\"amistrasse 101, 
CH-8092 Z\"urich, Switzerland.}
\email{ozlem@math.ethz.ch}
\author[von Pippich]{A.-M. von Pippich}
\address{
Anna-Maria von Pippich,
Fachbereich Mathematik, 
Technische Universit\"at Darmstadt,
Schlo{\ss}gartenstr. 7, 
D-64289 Darmstadt, 
Germany.
}
\email{pippich@mathematik.tu-darmstadt.de}
\author[T\'oth]{\'A. T\'oth}
\address{
\'Arp\'ad T\'oth,
Department of Analysis, 
E\"otv\"os Lor\'and University and MTA R\'enyi Int\'ezet Lend\"ulet Automorphic Research Group,
South Building Room 3.207, 
Budapest, 
Hungary.}
\email{toth@cs.elte.hu}
\begin{document}
\setcounter{tocdepth}{1}
\setcounter{section}{0}
\maketitle
\date{}

\begin{abstract}
In this article we give a Jensen--Rohrlich type formula for a certain class of automorphic functions 
on the hyperbolic 3-space for the group $\mathrm{PSL}_2(\mathcal{O}_K)$. 
\end{abstract}


\section{Introduction}
\subsection{Rohrlich's formula}
The classical Jensen's formula is a well-known theorem of complex analysis which characterizes, 
for a meromorphic function $f$ on the unit disc, the value of the integral of 
$\log|f(z)|$ on the unit circle in terms of the zeros and poles of $f$ inside the unit disc.
An important theorem of Rohrlich \cite{ROH} establishes a version of Jensen's formula for modular 
functions $f$ with respect to the full modular group $\mathrm{PSL}_2(\mathbb{Z})$  and expresses 
the integral of $\log|f(z)|$ over a fundamental domain in terms of special values of Dedekind's eta 
function. 

To be more precise, let $\H^2=\{ \tau=x+iy\,|\, x,y\in \R, y>0\}$, $\Gamma=\mathrm{PSL}_2(\Z)$, and
$X=\mathrm{PSL}_2(\Z)\backslash \H^2$. Let $\Gamma_{\tau}$ denote the 
stabilizer subgroup of $\tau$ in $\Gamma$ and let  $\nu(\tau)$ denote its order.
The hyperbolic measure on $X$ is given by $d\mu(\tau)=dxdy/y^2$ and the hyperbolic Laplacian on $X$
is given by
\begin{align*}
\Delta=-y^2\left(\frac{\partial^2}{\partial x^2}+\frac{\partial^2}{\partial y^2}\right).
\end{align*}
The quotient space $X$ has the structure of a hyperbolic Riemann surface of finite hyperbolic 
volume $\vol(X)=\pi/3$, admitting one cusp which we denote by $\infty$.
The field of modular functions on $X$ is given by $\mathbb{C}(j(\tau))$, with $j(\tau)$ denoting Klein's $j$-invariant \cite{Serre} satisfying
\begin{align*}
j(\tau)=\frac{1}{q_{\tau}}+744+O(q_{\tau}),
\end{align*}
as $\tau\to\infty$, where $q_{\tau}=e^{2\pi i \tau}$. 

Consider now the class $\mathcal{M}$ of functions $F:\H^2\to \R\cup\{\infty\}$ satisfying the following properties:
\begin{enumerate}
\item[($\mathcal{M}1$)] The function $F(\tau)$ is $\Gamma$-invariant and can therefore be considered as a function on $X$.
\item[($\mathcal{M}2$)] There exist distinct points $\tau_1,\ldots ,\tau_m\in X$ together with constants $n_1,\ldots ,n_m\in \Z$ satisfying $\sum_{\ell=1}^{m}n_{\ell}=0$ such that, for $\ell\in\{1,\ldots ,m\}$, 
the bound
\begin{equation*}
F(\tau)=n_{\ell}\, \nu(\tau_{\ell}) \log|\tau-\tau_{\ell}|^{-1}+O(1),
\end{equation*}
as $\tau \to \tau_{\ell}$, holds 
and such that $F(\tau)$ is smooth 
at any point $\tau\in X$ with $\tau\not= \tau_{\ell}$ for $\ell\in\{1,\ldots ,m\}$.
\item[($\mathcal{M}3$)]  For $\tau\in X$ with $\tau\not=\tau_{\ell}$ for $\ell\in\{1,\ldots ,m\}$, 
we have $\Delta F(\tau)=0$.
\item[($\mathcal{M}4$)] The function $F(\tau)$ is square-integrable on $X$.
\end{enumerate}
If $F:\H^2\to \mathbb{R}\cup\{\infty\}$ satisfies the properties $(\mathcal{M}1)$--$(\mathcal{M}4)$, 
then the limit $F(\infty):=\lim_{\tau \to \infty}F(\tau)$ exists 
and we have the equality 
\begin{align}\label{expression} 
F(\tau)=\log|f(\tau)|,\quad \mbox{ with } f(\tau)=e^{F(\infty)}\prod_{\ell=1}^m \left(j(\tau)-j(\tau_{\ell})\right)^{-n_{\ell}}.
\end{align}
Now, Rohrlich's Theorem can be rephrased as follows 
\begin{theorem}[Rohrlich \cite{ROH}]\label{thm-rohrlich} 
Let  $F:\H^2\to \R\cup\{\infty\}$ be in $\mathcal{M}$, the class of functions satisfying 
the properties $(\mathcal{M}1)$--$(\mathcal{M}4)$. Then, we have the equality
\begin{align*}
\frac{3}{\pi}\int_{X}F(\tau) d\mu(\tau)=F(\infty)+6\sum_{\ell=1}^{m}
n_{\ell}\log \big(|\eta(\tau_{\ell})|^4\, \mathrm{Im}(\tau_{\ell})\big),
\end{align*}
where 
$\eta(\tau)=q_{\tau}^{1/24}\prod_{n=1}^{\infty}\left(1 - q_{\tau}^{n}\right)$
 is the classical Dedekind's eta function.
\end{theorem}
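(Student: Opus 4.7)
The plan is to convert $\int_X F\,d\mu$ into a boundary integral via Green's second identity, using an auxiliary $\Gamma$-invariant function whose hyperbolic Laplacian equals a nonzero constant. The natural choice is
$$
v(\tau) := \log\bigl(|\eta(\tau)|^{4}\operatorname{Im}(\tau)\bigr),
$$
which is $\Gamma$-invariant because $\eta^{24}$ has weight $12$, so $|\eta(\tau)|^{24}\operatorname{Im}(\tau)^{6}$ is $\Gamma$-invariant and hence so is its logarithm up to a constant. A direct computation shows $\Delta v = 1$: indeed $\log|\eta|^{4}$ is twice the real part of a holomorphic function on $\H^{2}$, hence Euclidean-harmonic, while $\Delta\log y = 1$. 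Notice that the quantity $\log(|\eta(\tau_{\ell})|^{4}\operatorname{Im}(\tau_{\ell}))$ appearing on the right-hand side of the statement is precisely $v(\tau_{\ell})$, which is why this $v$ is the right test function.

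With $v$ in hand, fix a fundamental domain $\mathcal{F}$ for $\Gamma\backslash\H^{2}$ and truncate it to $\mathcal{F}_{T,\varepsilon}$ by imposing $\operatorname{Im}(\tau)\le T$ and removing Euclidean disks of radius $\varepsilon$ around each $\tau_{\ell}$. On this region $F$ is smooth with $\Delta F = 0$ by $(\mathcal{M}3)$, and $\Delta v = 1$, so Green's identity for the hyperbolic Laplacian yields
$$
\int_{\mathcal{F}_{T,\varepsilon}} F\,d\mu \;=\; \oint_{\partial \mathcal{F}_{T,\varepsilon}} \bigl(v\,\partial_{n} F - F\,\partial_{n} v\bigr)\,ds_{E},
$$
where $\partial_{n}$ denotes the outward Euclidean normal derivative and $ds_{E}$ the Euclidean arc length.

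The boundary contributions are then analyzed in the limit $T\to\infty$, $\varepsilon\to 0$. The paired vertical and circular sides of $\mathcal{F}$ cancel by the $\Gamma$-invariance of $F$ and $v$. On the top edge $\operatorname{Im}(\tau)=T$, the $q$-expansion of $\eta$ gives $v(\tau) = -\pi y/3 + \log y + O(e^{-2\pi y})$, while harmonicity together with $(\mathcal{M}4)$ and smoothness of $F$ in a cusp neighbourhood forces the non-constant Fourier modes $a_{n}(y)$ of $F$ to decay exponentially, so that $F(x+iy) = F(\infty) + O(e^{-2\pi y})$; hence this piece tends to $\pi F(\infty)/3$. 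On the small arc around each $\tau_{\ell}$, which traces a sector of angular opening $2\pi/\nu(\tau_{\ell})$ inside $\mathcal{F}$, the function $v$ is smooth while $\partial_{n} F \sim n_{\ell}\nu(\tau_{\ell})/\varepsilon$ by $(\mathcal{M}2)$, so a standard residue-type limit produces the contribution $2\pi\,n_{\ell}\,v(\tau_{\ell})$, with the factor $\nu(\tau_{\ell})$ in $\partial_{n}F$ exactly cancelling the sector factor $1/\nu(\tau_{\ell})$.

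Combining all surviving terms gives $\int_{X} F\,d\mu = \tfrac{\pi}{3}F(\infty) + 2\pi\sum_{\ell} n_{\ell}\log\bigl(|\eta(\tau_{\ell})|^{4}\operatorname{Im}(\tau_{\ell})\bigr)$, and multiplication by $3/\pi$ recovers the claimed identity of Theorem~\ref{thm-rohrlich}. The main obstacle is the delicate limit on the top edge: because $v$ grows linearly in $T$ while $\partial_{n}F$ must vanish exponentially, one needs the Fourier expansion argument to ensure that the product $v\,\partial_{n}F$ truly drops out. A preliminary task, before any of this, is to verify from $(\mathcal{M}1)$--$(\mathcal{M}4)$ that $F(\infty):=\lim_{\tau\to\infty}F(\tau)$ actually exists, which follows from the same Fourier expansion argument combined with the boundedness implied by harmonicity and $L^{2}$-integrability in the cusp neighbourhood.
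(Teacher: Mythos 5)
Your argument is correct, but it is not the route taken in the paper: Theorem \ref{thm-rohrlich} is quoted there without proof (attributed to \cite{ROH}, with the remark that it is an application of Kronecker's limit formula), and the method the authors actually develop --- for the three-dimensional analogue, Theorem \ref{mainth} --- is the resolvent-kernel approach. There one first establishes the representation $F=F(\infty)+2\pi\sum_{\ell}c_{\ell}\,\mathcal{L}(\cdot,Q_{\ell})$, where $\mathcal{L}(P,Q)$ is the value at $s=1$ of the automorphic Green's function with its Eisenstein part subtracted (the analogue of $\log|j(\tau_1)-j(\tau_2)|$), and then integrates term by term, using that the regularized Green's function is orthogonal to constants together with Kronecker's limit formula. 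You instead pair $F$ directly, via Green's second identity on a truncated fundamental domain, with the explicit invariant potential $v=\log\bigl(|\eta(\tau)|^{4}\,\mathrm{Im}(\tau)\bigr)$ satisfying $\Delta v=1$ in the paper's sign convention (note $v$ is exactly $\mathrm{PSL}_2(\mathbb{Z})$-invariant, not merely up to a constant, since $6v=\log(|\eta|^{24}y^{6})$). This is essentially Rohrlich's original argument; it is more elementary, requiring only the $q$-expansion of $\eta$ rather than the meromorphic continuation of $E_{\infty}(\tau,s)$. What it costs is generality: on $\mathbb{H}^3$ an explicit eta-like potential is the \emph{output} of Kronecker's limit formula rather than an available input, and the boundary-integral method does not by itself produce the building block $\mathcal{L}(P,Q)$ or the factorization \eqref{expression}, which the resolvent method yields for free and which the authors need for their generalization.

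Two points in your sketch carry the real analytic weight and should be spelled out. First, the cusp: the zero Fourier mode of a $\Delta$-harmonic $F$ is $\alpha+\beta y$ and $(\mathcal{M}4)$ forces $\beta=0$, while square-integrability kills the exponentially growing nonzero modes, so $F-F(\infty)$ and $\partial_{y}F$ decay like $O(e^{-2\pi y})$; this is exactly what makes the product $v\,\partial_{n}F$ vanish on the top edge despite the linear growth of $v$, and it also proves the existence of $F(\infty)$. Second, the elliptic points $i$ and $e^{2\pi i/3}$ lie on the boundary of the standard fundamental domain, so one must check that the excised arcs around a representative of $\tau_{\ell}$ have total angle $2\pi/\nu(\tau_{\ell})$; your normalization with the factor $\nu(\tau_{\ell})$ from $(\mathcal{M}2)$ then cancels this correctly and yields the contribution $2\pi n_{\ell}v(\tau_{\ell})$, giving $\tfrac{3}{\pi}\int_X F\,d\mu=F(\infty)+6\sum_{\ell}n_{\ell}v(\tau_{\ell})$ as claimed.
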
 
Observe that the function on the right hand side of the equality in Theorem \ref{thm-rohrlich}
is given by the constant term in the Laurent expansion of the non-holomorphic Eisenstein series 
$E_{\infty}(\tau,s)$ at $s=1$. For $\tau\in\mathbb{H}^2$ and $s\in\mathbb{C}$ with $\mathrm{Re}(s)>1$, this series is defined by
\begin{align*}
E_{\infty}(\tau,s)=\sum_{\gamma\in \Gamma_{\infty}\backslash \Gamma}\mathrm{Im}(\gamma \tau)^{s}.
\end{align*}
The Eisenstein series is $\Gamma$-invariant with respect to $\tau$ and holomorphic in $s$, and it admits a meromorphic continuation to the whole complex $s$-plane
with a simple pole at $s=1$ with residue 
\begin{align*}
\res_{s=1}E_{\infty}(P,s)=\frac{1}{\vol(X)}=\frac{3}{\pi}.
\end{align*}  
In this context,
the well-known Kronecker's limit formula for $\textrm{PSL}_2(\mathbb{Z})$
(see, e.g., \cite{Siegel80}) states
\begin{align*}
\lim_{s\to1}\left(E_{\infty}(z,s)-
\frac{3}{\pi(s-1)}\right)=
-\frac{3}{\pi}\log\bigl(|\eta(z)|^{4}\Im(z)\bigr)+C,
\end{align*}
where $C=6(1-12\,\zeta'(-1)-\log(4\pi))/\pi$ and $\zeta(s)$ denotes the Riemann zeta function.
Note that the constant $C$ does not appear in Theorem \ref{thm-rohrlich}, since $\sum_{\ell=1}^{m}n_{\ell}=0$.

The proof of Rohrlich's formula is an application of this Kronecker's limit formula.
The formula admits several generalizations and has many applications in number theory, 
see, e.g. \cite{funke}, \cite{kudla}.
There is also an extension of Rohrlich's formula which has applications
to the computation of arithmetic intersection numbers in Arakelov theory, see, e.g., \cite{kuehn}.

\subsection{Purpose of the article}
The goal of this paper is to give an analogue of Rohrlich's formula in $\H^3$, the hyperbolic 3-space. 
We write $\H^3=\{P=z+rj\mid z\in \C,r\in\mathbb{R}_{>0}\}$, which is a subset of the usual quaternions 
$\R[i,j,k]$, and we will view $z$ and $r$ as coordinate functions on $\H^3$. The quaternionic norm on $\R[i,j,k]$ induces a norm on $\H^3$ given 
explicitly by $\|P\|=\sqrt{|z|^2+r^2}$.
We let $K$ be an imaginary quadratic field, $\mathcal{O}_K$ its ring of integers, $h_K$ its class number,
and $d_K$ its discriminant. From now on, we let
$\Gamma=\mathrm{PSL}_2(\mathcal{O}_K)\subset \mathrm{PSL}_2(\mathbb{C})$, which is
a discrete and cofinite subgroup, and we let $X=\Gamma\backslash \H^3$.
By $\Gamma_{P}$ we denote the 
stabilizer subgroup of $P$ in $\Gamma$ and by $\nu(P)$ its order.
By $d\mu(P)$ we denote the hyperbolic measure on $X$
and by $\Delta$ the hyperbolic Laplacian on $X$ (see \ref{hyp}).
The quotient space $X$ has finite hyperbolic volume, which is explicitly given by
\begin{equation}\label{volX}
\vol(X)=\frac{|d_K|^{3/2}}{4\pi^2}\,\zeta_{K}(2)
\end{equation}
with $\zeta_K(s)$ denoting the Dedekind zeta function, and it admits  $h_K$ cusps (see Section \ref{section2}).

For  $P\in \H^3$ and $s\in \C$ with $\Re(s)>1$, the Eisenstein series
associated to the cusp $\infty$ is defined by
\begin{align*}
E_{\infty}(P,s)=
\sum_{\gamma\in \Gamma_{\infty}^{\prime}
\backslash \Gamma}r(\gamma P)^{s+1},
\end{align*}
where $\Gamma_{\infty}^{\prime}$ is the maximal unipotent subgroup of the 
stabilizer group $\Gamma_{\infty}$ of $\infty$ in $\Gamma$. 
The Eisenstein series is $\Gamma$-invariant with respect to $P$ and holomorphic in $s$, 
and it admits a meromorphic continuation to the whole complex $s$-plane
with a simple pole at $s=1$ with residue 
\begin{align}\label{ResEisInfinity}
\res_{s=1}E_{\infty}(P,s) =\frac{\covol(\mathcal{O}_{K})}{\vol(X)}
=\frac{2\pi^2}{|d_K|\zeta_K(2)}.
\end{align}
Here, $\covol(\mathcal{O}_{K})$ denotes the euclidean covolume of the 
lattice $\mathcal{O}_{K}$ in $\C$. In this case, Kronecker's limit formula states 
\begin{align}\label{KLF}
\lim_{s\to 1}\left(E_{\infty}(P,s)- 
\frac{2\pi^2}{|d_K|\zeta_K(2)(s-1)}\right)=-\frac{2\pi^2}{|d_K|\zeta_K(2)} 
\log\left(\eta_{\infty}(P)\, r(P)\right)
+ C_K,
\end{align}
where $C_K$ is an explicit constant depending only on $K$. Here, 
the function $\eta_{\infty}:\H^3\to \R$ satisfies 
$
\eta_{\infty}(\gamma P)=\|cP+d\|^{2}\eta_{\infty}(P)
$
for any $\gamma=\begin{psmallmatrix}a&b\\c&d\end{psmallmatrix}\in\Gamma$
and can be considered as the analogue of the weight 2 real-analytic 
modular form $|\eta(z)|^4$. The function $\eta_{\infty}$ is essentially the function 
defined by Asai in \cite{ASA}. More precisely, we have
\begin{align*}
-\frac{2\pi^2\log\left(\eta_{\infty}(P)\right)}{|d_K|\zeta_K(2)}=
\frac{|\mathcal{O}_K^{\times}|}{2} r^{2}
+
4 \pi
\sum_{\substack{\mu \in \mathcal{D}^{-1}\\ \mu\neq 0}}
|\mu| \varphi_{\infty,\infty}(\mu;1)\, r K_{1}(4\pi |\mu|r)e^{2\pi i \tr(\mu z)}.
\end{align*}
Here, we employed the notation of Section \ref{section2}.
The value $\varphi_{\infty,\infty}(\mu;1)$ can be explicitly given in
terms of special values of certain generalized divisors sums. For these results, 
we refer the reader to \cite{EGM}, Chapter 8, Sections 1--3.

Consider now the class  $\mathcal{A}$ of functions $F:\H^3\to \R\cup\{\infty\}$ satisfying the 
following properties:
\begin{enumerate}
\item[($\mathcal{A}1$)] The function $F(P)$ is $\Gamma$-invariant and can therefore 
be considered
as a function on $X$.
\item[($\mathcal{A}2$)] There exist distinct points $Q_1,\ldots,Q_m\in X$ together 
with constants $c_1,\ldots,c_m\in \R$ satisfying $\sum_{\ell=1}^{m}c_{\ell}=0$ such 
that, for $\ell\in\{1,\ldots,m\}$, the bound
\begin{equation*}
F(P)=c_{\ell}\,\nu(Q_{\ell}) \frac{r_{\ell}}{ \|P-Q_{\ell}\|}+O(1), 
\end{equation*}
as $P \to Q_{\ell}=z_{\ell}+r_{\ell} j$, holds and $F(P)$ is smooth at any point $P\in X$
with $P\not=Q_{\ell}$ for $\ell\in\{1,\ldots ,m\}$.
\item[($\mathcal{A}3$)] For $P\in X$ with $P\not=Q_{\ell}$ for $\ell\in\{1,\ldots ,m\}$, 
we have $\Delta F(P)=0$.
\item[($\mathcal{A}4$)] The function $F(P)$ is square-integrable on $X$.
\end{enumerate}

We note that the bounds in $(\mathcal{M}2)$ and $(\mathcal{A}2)$ are the natural bounds that arise from the type of singularities of the corresponding Green's functions.

In Proposition \ref{thm_block} of Section \ref{section_proofofmainthm}, we 
will show that, if $F:\H^3\to \R\cup\{\infty\}$ satisfies the properties $(\mathcal{A}1)$--$(\mathcal{A}4)$,
then the limit $F(\infty):=\lim_{r \to \infty}F(P)$ exists, and we will 
prove the analogue of \eqref{expression} in this case. Our main theorem is 
\begin{theorem}\label{mainth}
Let $F:\H^3\to \R\cup\{\infty\}$ be in $\mathcal{A}$, the class of functions
satisfying the properties $(\mathcal{A}1)$--$(\mathcal{A}4)$.
Then, we have the equality 
\begin{align*}
\frac{1}{\vol(X)}\int_{X}F(P) d\mu(P)= F(\infty)+\frac{2\pi}{\vol(X)}
\sum_{\ell=1}^{m} c_{\ell} \log\big(\eta_{\infty}(Q_{\ell})\, r_{\ell}\big).
\end{align*}
\end{theorem}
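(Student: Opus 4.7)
The plan is to adapt Rohrlich's original strategy by pairing $F$ against the Eisenstein series $E_\infty(\cdot,s)$ via Green's identity, and then extracting the stated identity from the Laurent expansion of $E_\infty$ at $s = 1$ provided by Kronecker's limit formula~\eqref{KLF}. To establish the existence of $F(\infty)$ claimed by Proposition~\ref{thm_block}, I expand $F$ in its Fourier series at the cusp $\infty$ with respect to the translation lattice $\mathcal{O}_K$: the zeroth coefficient is translation-invariant and harmonic, so depends only on $r$ and has the form $A + B r^2$ (since $\Delta r^2 = 0$ in $\mathbb{H}^3$). The hypothesis~$(\mathcal{A}4)$ forces $B = 0$, as $r^2$ is not square-integrable against the hyperbolic measure $dx\,dy\,dr/r^3$ at $r = \infty$; the non-constant Fourier coefficients decay exponentially via modified Bessel functions. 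Hence $F(P) \to A =: F(\infty)$.

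Next, fix a fundamental domain $\mathcal{F}$ for $\Gamma$, truncate it at height $r = T$, and excise small Euclidean balls $B_\epsilon(Q_\ell)$ about each singular point to obtain $\mathcal{F}_{T,\epsilon}$. Since $\Delta F = 0$ on $\mathcal{F}_{T,\epsilon}$ and $\Delta E_\infty(P,s) = (1 - s^2)\, E_\infty(P,s)$, Green's identity gives
\begin{equation*}
(1 - s^2) \int_{\mathcal{F}_{T,\epsilon}} F(P)\, E_\infty(P,s)\, d\mu(P) = \int_{\partial \mathcal{F}_{T,\epsilon}} \bigl(F\, \partial_\nu E_\infty - E_\infty\, \partial_\nu F\bigr)\, dS,
\end{equation*}
with side-identifications of $\mathcal{F}$ cancelling by $\Gamma$-invariance. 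As $\epsilon \to 0$, the singularity~$(\mathcal{A}2)$ together with the fact that $r_\ell/\|P - Q_\ell\|$ is, up to a constant, the fundamental solution of the hyperbolic Laplacian at the regular point $Q_\ell$ produces a boundary contribution of $4\pi \sum_\ell c_\ell\, E_\infty(Q_\ell,s)$: the factor $\nu(Q_\ell)$ in the singularity bound cancels the fraction $1/\nu(Q_\ell)$ of a full sphere intersecting $\mathcal{F}$, and the $4\pi$ is the Euclidean unit sphere area in $\mathbb{R}^3$ which survives the hyperbolic normalization. For $-1 < \Re(s) < 1$ the horospherical contribution at $r = T$ vanishes as $T \to \infty$: the constant Fourier coefficient of $E_\infty$ at $\infty$ has the form $r^{s+1} + \phi_\infty(s)\, r^{1-s}$, and its normal derivative $r \partial_r$ against the induced surface measure $dx\,dy/T^2$ produces prefactors $T^{s-1}$ and $T^{-s-1}$ both tending to zero. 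This yields the meromorphic identity
\begin{equation*}
(1 - s^2) \int_X F(P)\, E_\infty(P,s)\, d\mu(P) = 4\pi \sum_{\ell=1}^{m} c_\ell\, E_\infty(Q_\ell, s), \qquad -1 < \Re(s) < 1.
\end{equation*}

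Finally, I pass to $s \to 1$. Let $\kappa := \covol(\mathcal{O}_K)/\vol(X)$ denote the residue \eqref{ResEisInfinity}. On the right, the simple pole $\kappa/(s-1)$ of each $E_\infty(Q_\ell,s)$ is annihilated by $\sum_\ell c_\ell = 0$, and~\eqref{KLF} replaces the Laurent constant by $-\kappa \log(\eta_\infty(Q_\ell)\, r_\ell) + C_K$; the $C_K$-contribution is also killed by $\sum c_\ell = 0$, so the right-hand side evaluates to $-4\pi \kappa \sum_\ell c_\ell \log(\eta_\infty(Q_\ell)\, r_\ell)$. On the left, the residue of $\int_X F\, E_\infty\, d\mu$ at $s = 1$ equals $\kappa\bigl[\int_X F\, d\mu - F(\infty)\vol(X)\bigr]$: beyond the naive contribution $\kappa \int_X F\, d\mu$ from the internal pole of $E_\infty$, there is an additional $-\kappa F(\infty)\vol(X) = -F(\infty)\covol(\mathcal{O}_K)$ arising from the cusp tail of the constant Fourier coefficient $r^{s+1}$, whose integral $\int_T^\infty r^{s-2}\, dr = T^{s-1}/(1-s)$ retains a pole at $s = 1$ for every finite $T$. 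Multiplying this residue by $\lim_{s\to 1}(1-s^2)/(s-1) = -2$, equating both sides, and cancelling the nonzero factor $\kappa$ yields the stated formula. The main technical obstacle is precisely the identification of this subtle $F(\infty)\vol(X)$ cusp contribution to the residue of $\int_X F\, E_\infty\, d\mu$: it does not appear in the Laurent expansion of $E_\infty$ alone but arises from a careful interchange of limits and residues in the non-compact setting, and without it one would miss the $F(\infty)$ term in the final formula.
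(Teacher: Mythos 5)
Your argument is correct, but it follows a genuinely different route from the paper. The paper never pairs $F$ directly against $E_{\infty}(\cdot,s)$; instead it builds the function $\L(P,Q)$ from the resolvent kernel $G_s(P,Q)$, proves the representation $F(P)=F(\infty)+2\pi\sum_{\ell}c_{\ell}\,\L(P,Q_{\ell})$ (Proposition \ref{thm_block}, using removable singularities and the fact that square-integrable harmonic functions on $X$ are constant), and then integrates term by term using the orthogonality \eqref{G-orthogonal} of the regularized Green's function to constants together with Kronecker's limit formula. That route requires the meromorphic continuation of $G_s(P,Q)$ with growth control at the cusp, which is why Sections \ref{section-Fourier-expansions}--\ref{section-analytic-continuation} develop the Fourier expansions of $G_s$ and of the Niebur--Poincar\'e series. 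Your Green's-identity argument bypasses all of that machinery and needs only the classical continuation and Fourier expansion of $E_{\infty}$; in exchange it does not produce the building-block representation or the function $\L$, which the paper regards as of independent interest. You correctly isolate the genuinely delicate point, namely the extra $-F(\infty)\covol(\mathcal{O}_K)$ in the residue of $\int_X F\,E_{\infty}\,d\mu$ coming from the divergent cusp tail, and your constants (the $4\pi$ from the excised spheres, the cancellation of $\nu(Q_{\ell})$ against the sector of the sphere at an elliptic point, the factor $-2$ from $(1-s^2)/(s-1)$) all check out. Two details deserve explicit mention in a full write-up: first, the constant term of $E_{\infty}$ at $\infty$ is $\tfrac{|\mathcal{O}_K^{\times}|}{2}r^{s+1}+\varphi_{\infty,\infty}(0;s)r^{1-s}$ rather than $r^{s+1}+\cdots$, but the cusp cross-section of a fundamental domain has Euclidean area $\tfrac{2}{|\mathcal{O}_K^{\times}|}\covol(\mathcal{O}_K)$ because $\Gamma_{\infty}$ contains the units, so the two occurrences of $[\Gamma_{\infty}:\Gamma_{\infty}']$ cancel and your residue $\kappa\bigl[\int_X F\,d\mu-F(\infty)\vol(X)\bigr]$ is indeed correct; second, $X$ has $h_K$ cusps, and at the cusps other than $\infty$ the constant term of $E_{\infty}$ contains no $r^{s+1}$ piece, so those cusps contribute neither to the boundary term at height $T$ nor to the residue, and your identity survives unchanged.
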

Note that, in analogy with Rohrlich's Theorem, the constant $C_K$ 
arising in \eqref{KLF} does not appear in Theorem \ref{mainth}, since $\sum_{\ell=1}^{m}c_{\ell}=0$.

It is known to the experts that Rohrlich's formula can be proven using 
the theory of the resolvent kernel of the hyperbolic Laplacian and our proof of 
Theorem \ref{mainth} is a generalization of this method to the hyperbolic 3-space.
The advantage of this method is that it can be
generalized to other settings such as the case of the 
hyperbolic $n$-space.
This method also naturally leads to an 
analogue of the function $\log|j(\tau_1)-j(\tau_2)|$
(see the function defined in \eqref{def_analogj}). The properties of this function play a central role in our proof of Theorem \ref{mainth}, and the proof of these follow from properties of the resolvent kernel and of Niebur type Poincar\'e series.

\subsection{Outline of the article}
The paper is organized as follows.
In Section \ref{section2}, we begin by collecting background information. In Section \ref{section-Fourier-expansions}, we compute the Fourier expansion of the resolvent kernel associated to the hyperbolic Laplacian on $X$. In addition, we give the Fourier expansion of the Niebur type Poincar\'e series which appear as coefficients in the Fourier expansion of the resolvent kernel. 
To the best of the authors' knowledge these expansions have not been explicitly stated 
elsewhere in the literature and are of independent interest.
In Section \ref{section-analytic-continuation}, 
we study some of the analytic properties of the Niebur 
type Poincar\'e series and we prove the meromorphic continuation of the resolvent 
kernel via its Fourier expansion.
In Section \ref{section_proofofmainthm}, we construct the above mentioned analogue of $\log|j(\tau_1)-j(\tau_2)|$, prove its main properties, and give our proof of Theorem \ref{mainth} using these properties.
Identities involving special functions that are needed in the paper as well as some technical
lemmas are given in the Appendix and in Section \ref{section-technical-lemma}, respectively. 

\subsection{Acknowledgements}
The authors would like to thank the anonymous referee for helpful comments on an earlier version of this paper. 
Herrero, von Pippich, and T\'oth thank the Institute for Mathematical Research FIM at ETH Z\"urich for providing a stimulating and comfortable atmosphere during their visits to Z\"urich. Herrero, Imamo{\={g}}lu, and von Pippich 
thank J\"urg Kramer and the Department of Mathematics at Humboldt-Universit\"at zu Berlin for their kind hospitality during the preparation of this work. T\'oth thanks the support of the MTA R\'enyi Int\'ezet Lend\"ulet Automorphic Research Group and the NKFIH (National Research, Development and Innovation Office) grant ERC$\underline{\ }$HU$\underline{\ }$15 118946.

\section{Background material}\label{section2}
\subsection{The hyperbolic 3-space and the group $\PSL_2(\mathcal{O}_K)$}
Let
$
\H^3:=\{ P=z+rj\,|\, z\in\C, r\in\R_{>0}\}
$
denote the upper half-space model of the three-dimensional hyperbolic space,
where $\{1,i,j,k\}$ is the standard basis for the quaternions $\R[i,j,k]$. 
The quaternionic norm on $\R[i,j,k]$ 
induces a norm on $\H^3$ given explicitly by $\|P\|=\|z+rj\|=\sqrt{|z|^2+r^2}.$
For $z\in\C$, we set $\tr(z):=z+\overline{z}$. The hyperbolic volume element, resp.~the 
hyperbolic Laplacian are given as
\begin{align}\label{hyp}
d\mu (P):=\frac{dx\,dy\, dr}{r^{3}},\quad\textrm{resp.}
\quad\Delta:=-r^{2}\left(\frac{\partial^{2}}{\partial
x^{2}}+\frac{\partial^{2}}{\partial y^{2}}+\frac{\partial^{2}}{\partial
r^{2}}\right)+r \frac{\partial}{\partial r}\,.
\end{align}
Let $d(P,Q)$ denote the hyperbolic distance between the points $P$ and $Q$.
An explicit formula is given by
\begin{align}\label{formula_cosh}
\cosh\left(d(P,Q)\right)=
\frac{|z_1-z_2|^2+r_1^2+r_2^2}{2r_1r_2},
\end{align}
where $P=z_1+r_1j$ and $Q=z_2+r_2j$.
An element $\gamma=\bigl(\begin{smallmatrix}a&b\\c&d\end{smallmatrix}\bigr)\in \mathrm{PSL}_2(\C)$
acts on $\H^3$ by  
\begin{align*}
\gamma P=
\frac{(az+b)\overline{(cz+d)}+a\overline{ c} r^2}{|cz+d|^2+|c|^2r^2}+ \frac{r}{|cz+d|^2+|c|^2r^2}\,j,
\end{align*}
where $P=z+rj$. By abuse of notation, we represent an element of $\mathrm{PSL}_2(\C)$ by a matrix.

As mentioned in the Introduction, we let $K$ be an imaginary quadratic field, $\mathcal{O}_K$ 
its ring of integers, $h_K$ its class number, and $d_K$ its discriminant. 
We let $\Gamma=\mathrm{PSL}_2(\mathcal{O}_K)\subset \mathrm{PSL}_2(\mathbb{C})$
and we let $X:=\Gamma\backslash \H^3$. By $\Gamma_{P}$ we denote the 
stabilizer subgroup of $P$ in $\Gamma$ and by $\nu(P)$ its order.
In a slight abuse of notation, we will at times identify $X$ with a
fundamental domain in $\H^3$ and identify points on $X$ with their preimages in
such a fundamental domain. 
The hyperbolic volume $\vol(X)$ of $X$ is given by formula \eqref{volX} in terms
of a special value of Dedekind's zeta function, which is defined by
\begin{align*}
\zeta_{K}(s)=\sum_{\substack{I\subseteq \mathcal{O}_K \text{ideal}\\ I \not=(0)}}\Norm(I)^{-s},
\end{align*}
where $s\in\mathbb{C}$ with $\Re(s)>1$ and $\Norm(I)$ denotes the norm of $I$.

A cusp of $X$ is the $\Gamma$-orbit of a parabolic fixed point of $\Gamma$, and
$X$ has $h_K$ cusps. From now on we fix a complete set of representatives 
$C_{\Gamma}\subseteq \mathbb{P}^1(K)$ for the cusps of $X$. 
We write elements of $C_{\Gamma}$ as $[a:b]$ 
for $a,b\in\mathcal{O}_K$, not both equal to $0$, and we write $\infty:=[1:0]$ and
assume that $\infty\in C_{\Gamma}$.
Furthermore, for any cusp $\kappa=[a:b]\in C_{\Gamma}$, we fix a scaling matrix 
$\sigma_{\kappa}=\begin{psmallmatrix}a &\ast \\b& \ast \end{psmallmatrix}\in\mathrm{PSL}_2(K)$ 
such that $\sigma_{\kappa}\infty=\kappa$ and
\begin{align}\label{first-scaling-sigma}
\sigma_{\kappa}^{-1}\Gamma_{\kappa}\sigma_{\kappa}=\left\{ \begin{pmatrix}u& \lambda
\\0&u^{-1}\end{pmatrix}\bigg|\, 
u\in\mathcal{O}_K^{\times}, \lambda \in \Lambda_{\kappa}\right\}
\end{align}
with the full lattice $\Lambda_{\kappa}=(a\mathcal{O}_K+b\mathcal{O}_K)^{-2}\subseteq\mathbb{C}$
(see, e.g., \cite{mazanti}).
 For the cusp $\infty$,
we choose $\sigma_{\infty}$ to be the identity. 
Furthermore, for the 
maximal unipotent subgroup $\Gamma_{\kappa}^{\prime}$, which consists 
of all the parabolic elements of $\Gamma_{\kappa}$ together with the identity,  
we have
\begin{align}\label{scaling-sigma}
\sigma_{\kappa}^{-1}\Gamma^{\prime}_{\kappa}\sigma_{\kappa}=
\left\{ \begin{pmatrix}1&\lambda\\0&1\end{pmatrix}\bigg|\,
\lambda\in \Lambda_{\kappa} \right\}.
\end{align}
We let $\Lambda_{\kappa}^{*}=\{\nu\in \C: \tr(\nu \lambda)\in \Z \text{ for any }\lambda \in \Lambda_{\kappa}\}$ denote its dual lattice.
In particular, we have $\Lambda_{\infty}=\mathcal{O}_{K}$
and $\Lambda_{\infty}^{*}=\mathcal{D}^{-1}$ with
$\mathcal{D}^{-1}=\{\nu\in K\mid\tr(\nu\lambda)\in \Z \text{ for any } \lambda\in \mathcal{O}_K\}$
denoting the inverse different. 

\subsection{Fourier expansion of automorphic functions}\label{subsec_auto_functions}
A function $f:\H^3\to \C$ is called automorphic with respect to $\Gamma$ if it is 
$\Gamma$-invariant, that is, $f(\gamma P)=f(P)$ for any $\gamma\in\Gamma$. An important 
tool to study the behavior of an automorphic function $f$
at a cusp $\xi\in C_{\Gamma}$, with scaling matrix $\sigma_{\xi}$, is its Fourier expansion. 
More precisely, since the function $P\mapsto f(\sigma_{\xi}P)$ is
$\sigma_{\xi}^{-1}\Gamma^{\prime}_{\xi}\sigma_{\xi}$-invariant, employing \eqref{scaling-sigma}, we have
$
f(\sigma_{\xi}(P+\lambda))=f(\sigma_{\xi}P)
$
for any $\lambda\in\Lambda_{\xi}$. 
If $f$ is smooth, the Fourier expansion of 
$f$ with respect to the cusp $\xi$ is therefore of the form
\begin{align}\label{fourier_expansion_general}
f(\sigma_{\xi}P)=\sum_{\mu \in \Lambda_{\xi}^{*}}
a_{\mu}(r)e^{2\pi i \tr(\mu z)},
\end{align}
where $P=z+rj$ and with Fourier coefficients given by
$$a_{\mu}(r)=\frac{1}{\mathrm{covol}(\Lambda_{\xi})}
\int_{\C/\Lambda_{\xi}}f(\sigma_{\xi}P)e^{-2\pi i \tr(\mu z)}dz.$$
If we assume that $f$ is an eigenfunction of the hyperbolic Laplacian, satisfying
$\Delta f=(1-s^2)f$ for some $s\in\mathbb{C}$ with $s\not=0$,
and that $f$ is of polynomial growth
as $r\to \infty$, that is $f(z+rj)=O(r^C)$ as $r\to \infty$ for some constant $C$,
then the expansion \eqref{fourier_expansion_general} has the form
(see, e.g., \cite{EGM}, Theorem 3.1, p.~105)
\begin{align}\label{fourier_expansion_special}
f(\sigma_{\xi}P)=a_0r^{1+s}+b_0 r^{1-s}
+\sum_{\substack{\mu \in \Lambda_{\xi}^{*}\\ \mu\neq 0}}a_{\mu}\,r K_{s}(4\pi |\mu|r)e^{2\pi i \tr(\mu z)}
\end{align}
with $a_0, b_0, a_{\mu}\in\mathbb{C}$ and with $K_{s}(\cdot)$ denoting the modified
Bessel function of the second kind.

\subsection{Poincar\'e series}\label{subsection_poincare}
For later purposes, we define two families of eigenfunctions of the hyperbolic Laplacian, namely the Eisenstein series and the Niebur type Poincar\'e series, which from now one will be called Niebur--Poincar\'e series for simplicity. For this, let $\kappa\in C_{\Gamma}$ be a cusp with scaling matrix $\sigma_{\kappa}$.

For  $P\in \H^3$ and $s\in \C$ with $\Re(s)>1$, the Eisenstein series
associated to the cusp $\kappa$ is given by
\begin{align*}
E_{\kappa}(P,s)=
\sum_{\gamma\in \Gamma_{\kappa}^{\prime}\backslash \Gamma}
r(\sigma_{\kappa}^{-1}\gamma P)^{s+1}.
\end{align*}
The Eisenstein series is an automorphic function for $\Gamma$ and it is holomorphic 
in $s$ in the region $\mathrm{Re}(s)>1$. Moreover, it satisfies the differential 
equation
\begin{align*}
\left(\Delta-(1-s^2)\right)E_{\kappa}(P,s)=0,
\end{align*}
i.e.~it is an eigenfunction of $\Delta$. For $s\in\mathbb{C}$ with $\mathrm{Re}(s)>1$,
the Eisenstein series admits a Fourier expansion of the form \eqref{fourier_expansion_special}
given by (see, e.g., \cite{EGM}, Theorem 4.1, p.~111)
\begin{align}\label{equ_fourier_expansion_series_eis_gen}
E_{\kappa}(\sigma_{\xi}P,s)&=
\delta_{\kappa,\xi} [\Gamma_{\kappa}:\Gamma_{\kappa}^{\prime}]\,
r^{1+s}
+
\varphi_{\kappa,\xi}(0;s)\,r^{1-s}
+
\frac{2^{1+s}\pi^{s}}{\Gamma(s)}
\sum_{\substack{\mu \in \Lambda_{\xi}^{*}\\ \mu\neq 0}}|\mu|^{s}\varphi_{\kappa,\xi}(\mu;s)\,r K_{s}(4\pi |\mu|r)e^{2\pi i \tr(\mu z)},
\end{align}
where $\delta_{\kappa,\xi}$ is Kronecker's delta symbol and, for $s\in\mathbb{C}$ with $\mathrm{Re}(s)>1$, we have set
\begin{align}\label{def_varphiscattering}
\varphi_{\kappa,\xi}(\mu;s):=\frac{\pi}{\covol(\Lambda_{\xi})s}
\sum_{\begin{psmallmatrix}*&*\\c&d \end{psmallmatrix}\in \sigma_{\kappa}^{-1}\mathcal{R}_{\kappa,\xi}\sigma_{\xi}}
\frac{e^{2\pi i \tr(\mu \frac{d}{c})}}{|c|^{2s+2}}
\end{align}
with 
\begin{align}\label{def_R}
\mathcal{R}_{\kappa,\xi}:=\Gamma_{\kappa}^{\prime}\backslash \{\gamma\in \Gamma: \gamma \xi\neq \kappa\}/\Gamma_{\xi}^{\prime}.
\end{align} 
Note that $\{\gamma\in \Gamma: \gamma \xi\neq \kappa\}=\Gamma$ if
$\xi\not=\kappa$. It is known (see, e.g., \cite{EGM}, \cite{SAR}) that the function $\varphi_{\kappa,\xi}(\mu;s)$
admits a meromorphic continuation to all $s\in\mathbb{C}$, which is holomorphic at $s=1$ if $\mu \neq 0$.
It is also well-known that one can use the above Fourier expansion in order to prove that 
$E_{\kappa}(P,s)$ admits a meromorphic continuation to the whole complex $s$-plane. 
There is always a simple pole at $s=1$ with residue given by  
\begin{equation}\label{resEis}
\res_{s=1}E_{\kappa}(P,s)
=\frac{\covol(\Lambda_{\kappa})}{\vol(X)}
=\res_{s=1}\varphi_{\kappa,\kappa}(0;s).
\end{equation}
In case that $\kappa=\infty$, the residue is explicitly given by \eqref{ResEisInfinity} and
we have
\begin{equation*}
\varphi_{\infty,\infty}(0;s)=\frac{\pi |\mathcal{O}_K^{\times}|}{h_K|d_K|^{1/2}s}\sum_{\chi}\frac{L(s,\chi)}{L(s+1,\chi)},
\end{equation*}
where the sum runs over all characters $\chi$ of the class group of $K$ and
$L(s,\chi)$ denotes the associated $L$-function (see, e.g., \cite{EGM}, Chapter 8, Theorems 1.5 and 2.11). From these, a straight-forward computation yields 
the Kronecker's limit formula \eqref{KLF} stated in the Introduction.

Finally, we recall the definition of the Niebur--Poincar\'e series.
For  $P\in \H^3$ and $s\in \C$ with $\Re(s)>1$, the Niebur--Poincar\'e series
associated to the cusp $\kappa$ and to $\nu\in \Lambda_{\kappa}^{*}$, 
$\nu \neq 0$, is given by
\begin{align}\label{def-niebur-series}
F_{\kappa,\nu}(P,s)=
\sum_{\gamma \in \Gamma_{\kappa}'\backslash \Gamma}
r(\sigma_{\kappa}^{-1}\gamma P)\,
I_{s}\left(4\pi |\nu | r(\sigma_{\kappa}^{-1} \gamma P)\right)
e^{ 2\pi i \tr\left(\nu  z(\sigma_{\kappa}^{-1}\gamma P)\right)},
\end{align} 
where $I_s(\cdot)$ denotes the modified Bessel function of the first kind.
We recall that 
the Niebur--Poincar\'e series converges absolutely and defines an automorphic function, 
which is holomorphic for $s\in\mathbb{C}$
with $\mathrm{Re}(s)>1$ (see, e.g., \cite{matthes}).
Moreover, it satisfies the differential equation
\begin{align*}
\left(\Delta-(1-s^2)\right)F_{\kappa,\nu}(\cdot,s)=0,
\end{align*}
i.e.~it is an eigenfunction of $\Delta$.

\subsection{The resolvent kernel}\label{Green-subsection}
The resolvent kernel for the hyperbolic Laplacian is given by the automorphic Green's function.
For $P,Q\in \mathbb{H}^3$ with $P\not=\gamma Q$ for any $\gamma\in\Gamma$,
and $s\in \C$ with $\Re(s)>1$, it is defined by 
\begin{align*}
G_s(P,Q)=\frac{1}{2\pi}\sum_{\gamma\in \Gamma}
\varphi_s\left(\cosh (d(P,\gamma Q))\right),
\end{align*}
where $\varphi_s(t)=(t+\sqrt{t^2-1})^{-s}(t^2-1)^{-1/2}$. The series defining $G_s(P,Q)$ converges uniformly on compact subsets of $\{(P,Q)\in \H^3\times \H^3:P\neq \gamma Q \text{ for any }\gamma \in \Gamma)\}\times \{s\in \C: \mathrm{Re}(s)>1\}$. 
We recall the following well-known properties of $G_s(P,Q)$ (see, e.g., \cite{EGM}):
\begin{enumerate}
\item[(G1)] 
The function $G_s(P,Q)$ is $\Gamma$-invariant in each variable
and can therefore be considered as a function on $X\times X$, away from the diagonal. 
Moreover, we have $G_s(P,Q)=G_s(Q,P)$.
\item[(G2)] 
For fixed $Q\in X$, 
we have a singularity of the form
\begin{equation*}
G_s(P,Q)= \frac{\nu(Q)}{2\pi} \frac{1}{ d(P,Q)}+O_Q(1), 
\end{equation*}
as $P \to Q$.  
\item[(G3)] For $P,Q\in X$ with $P\neq Q$, we have $(\Delta_P-(1-s^2)) G_s(P,Q)=0.$
\end{enumerate}
The Green's function is holomorphic for  $s\in \C$ with $\Re(s)>1$ and it 
admits a meromorphic continuation to the whole complex $s$-plane
with a simple pole at $s=1$ with residue 
\begin{equation}\label{Greenpole}
\res_{s=1}\,G_s(P,Q)=\frac{1}{\vol(X)}.
\end{equation}
Moreover, using the spectral expansion of $G_s(P,Q)$ given in \cite{EGM} (Proposition 4.6, p.~285), 
it is easy to see that the function
\begin{align*}\label{prop_greens}
P  \mapsto \lim_{s\to 1}\left(G_s(P,Q)-\frac{2}{\vol(X)(s^2-1)}\right)
\end{align*}
is square-integrable on $X$, for fixed $Q\in X$, and orthogonal to the constant functions, i.e.
\begin{equation}\label{G-orthogonal}
\int_X\lim_{s\to 1}\left(G_s(P,Q)-\frac{2}{\vol(X)(s^2-1)}\right)d\mu(P)=0.
\end{equation}

\section{Fourier expansions}\label{section-Fourier-expansions}
In this section, we compute the Fourier expansion of the Green's function and that of 
the Niebur--Poincar\'e series. Part of the computations involve explicit evaluations of 
certain integrals in terms of special functions. The proof of these technical identities 
is postponed to Section \ref{section-technical-lemma} in order to keep the exposition simple.

\begin{proposition}\label{prop-fourier-expansion-greens}
Let $P=z+rj\in \H^3$ with $r>r(\sigma^{-1}_{\xi}\gamma Q)$ for any $\gamma\in\Gamma$, 
and $s\in\mathbb{C}$ with $\Re(s)>1$. Then, we have the following Fourier expansion
\begin{equation*}
G_s(\sigma_{\xi}P,Q)=\frac{1}{\covol(\Lambda_{\xi})}\biggl(
 \frac{r^{1-s}}{s}E_{\xi}(Q,s)+2\sum_{\substack{\mu \in \Lambda_{\xi}^{*}\\ \mu\neq 0}} F_{\xi,-\mu}(Q,s)   \, r K_{s}(4\pi  |\mu| r)  e^{2\pi i \tr (\mu z )}\biggr).
\end{equation*}
\end{proposition}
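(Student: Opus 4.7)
The plan is to compute the Fourier expansion of $P \mapsto G_s(\sigma_\xi P, Q)$ directly from the defining series, by interchanging the Fourier integral with the sum over $\Gamma$ and then unfolding via the subgroup $\Gamma_\xi'$. First I would observe that the $\Gamma$-invariance of $G_s$ in the first argument together with \eqref{scaling-sigma} implies $G_s(\sigma_\xi((z+\lambda)+rj),Q)=G_s(\sigma_\xi P,Q)$ for every $\lambda\in\Lambda_\xi$, so the function admits a Fourier expansion indexed by $\mu\in\Lambda_\xi^{*}$ with coefficients $a_\mu(r)=\covol(\Lambda_\xi)^{-1}\int_{\C/\Lambda_\xi}G_s(\sigma_\xi P,Q)e^{-2\pi i\tr(\mu z)}\,dz$.

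Next I would substitute the defining series for $G_s$, decompose $\Gamma=\bigsqcup_{[\gamma]\in\Gamma_\xi'\backslash\Gamma}\Gamma_\xi'\gamma$, and use the identification of $\sigma_\xi^{-1}\Gamma_\xi'\sigma_\xi$ with translations by $\lambda\in\Lambda_\xi$ together with the isometry property of the hyperbolic distance to write
\begin{equation*}
\sum_{\gamma\in\Gamma}\varphi_s(\cosh d(\sigma_\xi P,\gamma Q))=\sum_{[\gamma]\in \Gamma_\xi'\backslash \Gamma}\sum_{\lambda\in\Lambda_\xi}\varphi_s(\cosh d((z-\lambda)+rj,\sigma_\xi^{-1}\gamma Q)).
\end{equation*}
Absolute convergence of the Green series for $\Re(s)>1$ (and the hypothesis $r>r(\sigma_\xi^{-1}\gamma Q)$) justifies swapping the sum with the integral, and the standard unfolding $\sum_\lambda\int_{\C/\Lambda_\xi}\to\int_\C$, followed by the translation $z\mapsto z+z(\sigma_\xi^{-1}\gamma Q)$, reduces each coset term to a phase $e^{-2\pi i\tr(\mu w_\gamma)}$ times the archetypal integral
\begin{equation*}
I_\mu(r,r'):=\int_\C\varphi_s\!\left(\frac{|z|^{2}+r^{2}+r'^{\,2}}{2rr'}\right)e^{-2\pi i\tr(\mu z)}\,dz,
\end{equation*}
where $w_\gamma:=z(\sigma_\xi^{-1}\gamma Q)$ and $r':=r(\sigma_\xi^{-1}\gamma Q)$.

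The main obstacle is the explicit evaluation of $I_\mu(r,r')$ for $r>r'$. For $\mu=0$, passing to polar coordinates and substituting $t=(\rho^{2}+r^{2}+r'^{\,2})/(2rr')$ followed by $t=\cosh\tau$ reduces the radial integral, using $\varphi_s(\cosh\tau)=e^{-s\tau}/\sinh\tau$, to $(2\pi rr'/s)\,e^{-s\tau_0}=2\pi r^{1-s}r'^{\,1+s}/s$, since $e^{-\tau_0}=r'/r$. For $\mu\neq 0$, the angular integration yields a $J_0$-Bessel factor and I would invoke a Lipschitz--Hankel type identity giving $I_\mu(r,r')=4\pi rr'\,K_s(4\pi|\mu|r)\,I_s(4\pi|\mu|r')$; this Bessel-function computation is the only nontrivial analytic step and I would defer it to Section~\ref{section-technical-lemma}. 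Substituting these evaluations back and recognizing the resulting sums over $[\gamma]\in\Gamma_\xi'\backslash\Gamma$ as $E_\xi(Q,s)$ in the case $\mu=0$ and as $F_{\xi,-\mu}(Q,s)$ in the case $\mu\neq 0$ via \eqref{def-niebur-series} then yields the stated expansion.
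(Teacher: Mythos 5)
Your proposal is correct and follows essentially the same route as the paper: unfold the Green's series over $\Gamma_{\xi}'\backslash\Gamma$, reduce each coset to the archetypal integral $I_{\mu}(r,r')$ by translation, and evaluate it as $2\pi s^{-1}r^{1-s}r'^{\,1+s}$ for $\mu=0$ and $4\pi rr'K_s(4\pi|\mu|r)I_s(4\pi|\mu|r')$ for $\mu\neq 0$, which is exactly the content of Lemma \ref{Iintegral} in Section \ref{section-technical-lemma}. The only cosmetic difference is your direct $t=\cosh\tau$ substitution for the zeroth coefficient, which is equivalent to the antiderivative computation in the paper.
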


\begin{proof}
The Fourier coefficient $a_{\mu}(r)=a_{\mu,s}(r,Q)$ in the Fourier expansion 
\eqref{fourier_expansion_general} of the function $P\mapsto G_s(P,Q)$ with respect to the cusp $\xi$ is given 
by 
$$a_{\mu,s}(r,Q)=\frac{1}{\covol(\Lambda_{\xi})}\int_{\C/\Lambda_{\xi}}
G_s(\sigma_{\xi}P,Q)e^{-2\pi i \tr ( \mu z )}dz.
$$
To compute this integral, we start by writing
\begin{align*}
G_s(\sigma_{\xi}P,Q)
&=
\frac{1}{2\pi}\sum_{\gamma\in \Gamma_{\xi}'\backslash \Gamma}
\sum_{\eta\in \Gamma_{\xi}'}
\varphi_s\left(\cosh (d(\eta^{-1}\sigma_{\xi}P,\gamma Q))\right)\\
&
=\frac{1}{2\pi}\sum_{\gamma\in \Gamma_{\xi}'\backslash \Gamma}
\sum_{\lambda\in \Lambda_{\xi}}
\varphi_s\left(\cosh (d(P+\lambda,\sigma_{\xi}^{-1}\gamma Q))\right),
\end{align*}
where for the last equality we employed \eqref{scaling-sigma}, 
namely the identity
$
\sigma_{\xi}^{-1}\Gamma^{\prime}_{\xi}\sigma_{\xi}=
\left\{\begin{psmallmatrix}1&\lambda\\0&1\end{psmallmatrix}|\,
\lambda\in \Lambda_{\xi} \right\}
$.
Hence, we get
\begin{align*}
a_{\mu,s}(r,Q)=\frac{1}{2\pi\covol(\Lambda_{\xi})}
\sum_{\gamma\in \Gamma_{\xi}'\backslash \Gamma}\,\int_{\C}
\varphi_s\left(\cosh (d(P,\sigma_{\xi}^{-1}\gamma Q))\right)e^{-2\pi i \tr ( \mu z)}dz.
\end{align*}
Now, we set $\tilde{z}:=z(\sigma_{\xi}^{-1}\gamma Q)$ and $\tilde{r}:=
r(\sigma_{\xi}^{-1}\gamma Q)$.
Using formula \eqref{formula_cosh}, namely
$$
\cosh (d(P,\sigma_{\xi}^{-1}\gamma Q))=
\frac{|z-\tilde{z}|^2+r^2+\tilde{r}^2}{2r\tilde{r}},
$$
we obtain by a change of variables
($z\mapsto z+\tilde{z}$), 
$$
a_{\mu,s}(r,Q)
=\frac{1}{2\pi\covol(\Lambda_{\xi})}
\sum_{\gamma\in \Gamma_{\xi}'\backslash \Gamma}
e^{-2\pi i \tr (\mu  \tilde{z} )}
I_{\mu,s}(r,\tilde{r}),
$$
where we have set
\begin{align*}
I_{\mu,s}(r,\tilde{r}):=\int_{\C}\varphi_s\left(\frac{|z|^2+r^2+\tilde{r}^2}{2r\tilde{r}}\right)e^{-2\pi i \tr (\mu z )}dz.
\end{align*}
By Lemma \ref{Iintegral}, we have
\begin{align*}
I_{\mu,s}(r,\tilde{r})=
\begin{cases}
2\pi s^{-1}r^{1-s}\tilde{r}^{s+1}, &\text{ if }\mu= 0,\\
4\pi r\tilde{r} K_{s}(4\pi |\nu|r) I_{s}(4\pi |\nu|\tilde{r}),  &\text{ if }\mu \neq 0.
\end{cases}
\end{align*}
Summing up and recalling that $\tilde{r}=r(\sigma_{\xi}^{-1}\gamma Q)$, we conclude
\begin{align*}
a_{0,s}(r,Q)
=\frac{1}{\covol(\Lambda_{\xi})} \frac{r^{1-s}}{s}
\sum_{\gamma\in \Gamma_{\xi}'\backslash \Gamma}
r(\sigma_{\xi}^{-1}\gamma Q)^{s+1}=\frac{1}{\covol(\Lambda_{\xi})} \frac{r^{1-s}}{s}E_{\xi}(Q,s)
\end{align*}
and, for $\mu\neq 0$, we derive
\begin{align*}
a_{\mu,s}(r,Q)
&=\frac{2}{\covol(\Lambda_{\xi})} r K_{s}(4\pi  |\mu| r) 
\sum_{\gamma\in \Gamma_{\xi}'\backslash \Gamma}
\tilde{r}I_{s}(4\pi  |\mu| \tilde{r}) e^{-2\pi i \tr (\mu  \tilde{z} )}\\
&=\frac{2}{\covol(\Lambda_{\xi})} r K_{s}(4\pi  |\mu| r)
F_{\xi,-\mu}(Q,s),
\end{align*}
as asserted. This completes the proof.
\end{proof}
We proceed by computing the Fourier expansion of the Niebur--Poincar\'e series
$F_{\kappa,\nu}(P,s)$, where $\kappa\in C_{\Gamma}$ and $\nu\in \Lambda_{\kappa}^{*}$, $\nu \neq 0$.
For this, we define the function $\J_s:\C^{\times}\to \C$ by 
\begin{align}\label{def_Lniebur}
\J_s(z):=
\begin{cases}
J_s(4\pi \sqrt{z})J_s(4\pi \sqrt{\overline{z}}), &\text{ if } \Re(z)\geq 0,\\
I_s(4\pi \sqrt{-z})I_s(4\pi \sqrt{-\overline{z}}), &  \text{ if } \Re(z)\leq 0.  
\end{cases}
\end{align}
Using the identity $I_s(z)=e^{\mp s \pi /2}J_s(ze^{\pm \pi i /2})$ for $z\in \C$ with $\Re(z)>0$,
it is easy to verify that $\J_s(z)$ is well-defined for $z\in \C$, $z\neq 0$, with $\Re(z)=0$.
With this, we have

\begin{proposition}\label{FENgeneral}
Let $P=z+rj\in \H^3$ and $s\in\mathbb{C}$ with $\Re(s)>1$. Then, 
we have the following Fourier expansion
\begin{align*}
F_{\kappa,\nu}(\sigma_{\xi} P,s)
&= \delta_{\kappa,\xi}\, rI_{s}(4\pi  |\nu| r)
\sum_{ u\in \mathcal{O}_{K}^{\times}/\{\pm1\}}
e^{2\pi i \tr( \nu u^2z )} +\frac{\mathrm{covol}(\Lambda_{\kappa})}{\mathrm{covol}(\Lambda_{\xi})}
 \frac{(2\pi|\nu|)^s}{s\Gamma(s)}\varphi_{\xi,\kappa}(-\nu;s)\,r^{1-s}\\ 
&\mathrel{\phantom{=}} + \sum_{\substack{\mu \in \Lambda_{\xi}^{\ast}\\ \mu\neq 0}}
\mathcal{B}_{\kappa,\xi}(\nu,\mu;s)\,rK_s(4\pi |\mu|r) e^{2\pi i \tr(\mu z)}.
\end{align*}
Here, 
\begin{align*}
\mathcal{B}_{\kappa,\xi}(\nu,\mu;s)
:=\frac{2\pi}{\covol(\Lambda_{\xi})}\,
\sum_{\begin{psmallmatrix}a&*\\c&d \end{psmallmatrix} \in \sigma_{\kappa}^{-1}\mathcal{R}_{\kappa,\xi}\sigma_{\xi}}\frac{e^{2\pi i \tr\left( (\nu a+\mu d)/c\right)}}{|c|^{2}}
\,\J_s\left(\frac{\nu \mu}{c^2}\right),
\end{align*}
and $\varphi_{\xi,\kappa}(-\nu;s)$, $\mathcal{R}_{\kappa,\xi}$, and $\J_s(\cdot)$ are given by \eqref{def_varphiscattering}, \eqref{def_R}, \eqref{def_Lniebur}, respectively.
\end{proposition}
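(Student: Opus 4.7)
The proof follows the same pattern as that of Proposition~\ref{prop-fourier-expansion-greens}: one splits the defining sum for $F_{\kappa,\nu}(\sigma_\xi P,s)$ according to whether $\gamma\xi=\kappa$ or not, and for the latter part unfolds the Fourier integral over $\mathbb{C}/\Lambda_\xi$ using right $\Gamma_\xi'$-invariance. First I would decompose
\begin{align*}
F_{\kappa,\nu}(\sigma_\xi P,s)=\sum_{\substack{\gamma\in\Gamma_\kappa'\backslash\Gamma\\\gamma\xi=\kappa}}(\cdots)+\sum_{\substack{\gamma\in\Gamma_\kappa'\backslash\Gamma\\\gamma\xi\neq\kappa}}(\cdots).
\end{align*}
The first sum is nonempty only when $\kappa=\xi$, in which case by \eqref{first-scaling-sigma} the cosets $\Gamma_\kappa'\backslash\Gamma_\kappa$ are represented by the elements $\sigma_\kappa\begin{psmallmatrix}u&0\\0&u^{-1}\end{psmallmatrix}\sigma_\kappa^{-1}$ for $u\in\mathcal{O}_K^\times/\{\pm1\}$. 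Since $|u|=1$ in an imaginary quadratic field, such an element maps $z+rj$ to $u^2 z+rj$, which produces the first claimed term at once. For the second sum, the double coset decomposition $\{\gamma\in\Gamma:\gamma\xi\neq\kappa\}=\bigsqcup_{\gamma_0\in\mathcal{R}_{\kappa,\xi}}\Gamma_\kappa'\gamma_0\Gamma_\xi'$ has trivial stabilizers (since $\gamma_0\xi\neq\kappa$), so the $\Lambda_\xi$-sum inside the $\mu$-th Fourier coefficient unfolds the integral over $\mathbb{C}/\Lambda_\xi$ to one over $\mathbb{C}$, leaving one integral per $\gamma_0\in\sigma_\kappa^{-1}\mathcal{R}_{\kappa,\xi}\sigma_\xi$.

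Writing $\gamma_0=\begin{psmallmatrix}a&*\\c&d\end{psmallmatrix}$ and performing the substitution $z\mapsto z-d/c$, the known action formulas for $r(\gamma_0 P)$ and $z(\gamma_0 P)$ pull out a factor $e^{2\pi i\tr((\nu a+\mu d)/c)}/|c|^2$ (using $\gamma_0\infty=a/c$), reducing each summand to a canonical integral over $\mathbb{C}$ depending only on $\nu\mu/c^2$, $\mu$, $r$ and $s$. Two technical identities, to be proved in Section~\ref{section-technical-lemma}, evaluate this integral: for $\mu\neq 0$ it gives $2\pi\, r K_s(4\pi|\mu|r)\,\J_s(\nu\mu/c^2)$, yielding the claimed value of $\mathcal{B}_{\kappa,\xi}(\nu,\mu;s)$; for $\mu=0$ it gives a multiple of $r^{1-s}/|c|^{2s}$ with prefactor $e^{2\pi i\tr(\nu a/c)}$. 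Reassembling and applying the involution $\gamma_0\mapsto\gamma_0^{-1}$, which bijects $\sigma_\kappa^{-1}\mathcal{R}_{\kappa,\xi}\sigma_\xi$ onto $\sigma_\xi^{-1}\mathcal{R}_{\xi,\kappa}\sigma_\kappa$ and turns the bottom row $(c,d)$ into $(-c,a)$, identifies the $\mu=0$ contribution with $\varphi_{\xi,\kappa}(-\nu;s)\,r^{1-s}$ up to the $\covol(\Lambda_\kappa)/\covol(\Lambda_\xi)$ and Bessel prefactors dictated by \eqref{def_varphiscattering}.

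The main obstacle is the explicit evaluation of the canonical integral for $\mu\neq 0$. Passing to polar coordinates in $z$, one reduces it to a radial transform of an $I_s$-Bessel integrand against the $K$-Bessel kernel, whose value naturally exhibits the two cases of \eqref{def_Lniebur} according to the sign of $\Re(\nu\mu/c^2)$; the identity $I_s(w)=e^{\mp s\pi/2}J_s(we^{\pm\pi i/2})$ then ensures that the two branches agree on the imaginary axis, so that $\J_s$ is well-defined off the origin. Absolute convergence of all rearrangements is justified throughout in the range $\Re(s)>1$, where both the Niebur--Poincar\'e series and its Fourier expansion converge uniformly on compacta.
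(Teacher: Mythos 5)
Your proposal is correct and follows essentially the same route as the paper: the same splitting of the sum according to $\gamma\xi=\kappa$ versus $\gamma\xi\neq\kappa$, the same identification of the diagonal contribution via \eqref{first-scaling-sigma}, the same unfolding over $\mathcal{R}_{\kappa,\xi}$ with the substitution $z\mapsto z-d/c$, the same deferral of the canonical integral to a technical lemma (Lemma \ref{Gintegral}), and the same inversion $\gamma_0\mapsto\gamma_0^{-1}$ to recognize $\varphi_{\xi,\kappa}(-\nu;s)$ in the $\mu=0$ coefficient. The only additions are cosmetic (your explicit remark on trivial stabilizers of the double cosets), and the bookkeeping of the $|c|^{-2}$ factor is consistent with the paper's.
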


\begin{proof}
To simplify the notation, we set $f(P):=r(P)I_{s}\left(4\pi |\nu | r(P)\right)e^{ 2\pi i \tr\left(\nu  z(P)\right)}$ 
and we define
\begin{align*}
\widehat{F}_{\kappa,\nu}(P,s):= 
 \sum_{\gamma \in \mathcal{R}_{\kappa,\xi}}\sum_{\eta\in\Gamma'_{\xi} }
f(\sigma_{\kappa}^{-1} \gamma \eta P).
\end{align*}
Recalling the definition \eqref{def-niebur-series} of the Niebur-Poincar\'e series, we 
then deduce
\begin{align}
F_{\kappa,\nu}(\sigma_{\xi} P,s) 
&=\sum_{\gamma \in \Gamma_{\kappa}'\backslash \Gamma}
f\left(\sigma_{\kappa}^{-1}\gamma \sigma_{\xi}P\right)\notag\\
&=\delta_{\kappa,\xi}\sum_{\gamma \in \Gamma_{\kappa}'\backslash \Gamma_{\kappa}}
f\left(\sigma_{\kappa}^{-1}\gamma \sigma_{\xi}P\right)
+ \widehat{F}_{\kappa,\nu}(\sigma_{\xi}P,s).\label{splitting}
\end{align}
To treat the first term in \eqref{splitting}, we assume that 
$\delta_{\kappa,\xi}=1$, that is $\kappa=\xi$ and $\sigma_{\kappa}= \sigma_{\xi}$.
Then
\begin{align*}
\delta_{\kappa,\xi}\sum_{\gamma \in \Gamma_{\kappa}'\backslash \Gamma_{\kappa}}
f\left(\sigma_{\kappa}^{-1}\gamma \sigma_{\xi}P\right)
=\sum_{\gamma \in \sigma_{\kappa}^{-1}(\Gamma_{\kappa}'\backslash \Gamma_{\kappa})\sigma_{\kappa}}
f\left(\gamma P\right)
= rI_{s}(4\pi  |\nu| r)
\sum_{ u\in \mathcal{O}_{K}^{\times}/\{\pm1\}}
e^{2\pi i \tr( \nu u^2z )},
\end{align*}
where for the second equality we note that
$\sigma_{\kappa}^{-1}(\Gamma_{\kappa}'\backslash \Gamma_{\kappa})\sigma_{\kappa}\cong  
\left\{ \begin{psmallmatrix}u&0
\\0&u^{-1}\end{psmallmatrix}\mid
u\in\mathcal{O}_K^{\times}\right\}/\{\pm1\}$, which is an immediate consequence of 
\eqref{first-scaling-sigma} and \eqref{scaling-sigma}, and we
used
the identity $\begin{psmallmatrix}u&0\\0&u^{-1} \end{psmallmatrix} P=
u^2z+rj$ for $u\in \mathcal{O}_{K}^{\times}$.

To treat the second term in \eqref{splitting}, we note that 
the function $\widehat{F}_{\kappa,\nu}(\sigma_{\xi} \cdot ,s)$ is $\sigma_{\xi}^{-1}\Gamma'_{\xi} \sigma_{\xi}$-invariant. 
The Fourier coefficient $b_{\mu}(r)=b_{\mu,\kappa,\nu}(r,s)$ in the Fourier expansion 
\eqref{fourier_expansion_general} of the function $\widehat{F}_{\kappa,\nu}(P,s)$ with respect to the cusp $\xi$ is given 
by 
$$b_{\mu,\kappa,\nu}(r,s)
=\frac{1}{\covol(\Lambda_{\xi})}\int_{\C/\Lambda_{\xi}}
\widehat{F}_{\kappa,\nu}(\sigma_{\xi} P,s)e^{-2\pi i \tr ( \mu z )}dz.
$$
To compute this integral, we start by writing
\begin{align*}
\widehat{F}_{\kappa,\nu}(\sigma_{\xi} P,s)
&=
 \sum_{\gamma \in \mathcal{R}_{\kappa,\xi}}\sum_{\eta\in\Gamma'_{\xi} }
f\left(\sigma_{\kappa}^{-1} \gamma \eta \sigma_{\xi} P\right)\\
&=\sum_{\gamma\in \mathcal{R}_{\kappa,\xi}}
\sum_{\lambda\in \Lambda_{\xi}}
f\left(\sigma_{\kappa}^{-1} \gamma \sigma_{\xi}(P+\lambda)\right),
\end{align*}
where for the last equality we employed \eqref{scaling-sigma}, 
namely the identity
$
\sigma_{\xi}^{-1}\Gamma^{\prime}_{\xi}\sigma_{\xi}=
\left\{\begin{psmallmatrix}1&\lambda\\0&1\end{psmallmatrix}|\,
\lambda\in \Lambda_{\xi} \right\}
$.
Hence, we get
\begin{align*}
b_{\mu,\kappa,\nu}(r,s)&=\frac{1}{\covol(\Lambda_{\xi})}
 \sum_{\gamma \in\sigma_{\kappa}^{-1}  \mathcal{R}_{\kappa,\xi}\sigma_{\xi}}\,\int_{\C}
f\left(\gamma P\right)e^{-2\pi i \tr ( \mu z)}dz.
\end{align*}
Now, writing
$
z(\gamma P)=\tfrac{a}{c}-\tfrac{1}{c}  \tfrac{\overline{cz+d}}{|cz+d|^2+|c|^2r^2}
$ with  $\gamma=\bigl(\begin{smallmatrix}a&b\\c&d\end{smallmatrix}\bigr)$ and using 
\begin{align*}
f\left(\gamma P\right)
&=\frac{r}{|cz+d|^2+|c|^2r^2}\,I_s\left(\frac{4\pi |\nu|r}{|cz+d|^2+|c|^2r^2}\right)
e^{2\pi i \tr( \nu\frac{a}{c})}
e^{-2\pi i \tr \left(\tfrac{\nu}{c} \tfrac{\overline{cz+d}}{|cz+d|^2+|c|^2r^2}\right)},
\end{align*}
we obtain
by a change of variables ($z\mapsto z-\tfrac{d}{c}$) 
\begin{align*}
b_{\mu,\kappa,\nu}(r,s)&=\frac{1}{\mathrm{covol}(\Lambda_{\xi})}\sum_{\begin{psmallmatrix}a&*\\c&d \end{psmallmatrix} \in \sigma_{\kappa}^{-1}\mathcal{R}_{\kappa,\xi}\sigma_{\xi}}
e^{2\pi i \tr \left(\nu\frac{a}{c}+ \mu \frac{d}{c}\right)}
\I(r,\nu,\mu,c),   
\end{align*}
where we have set
\begin{align*}
\I(r,\nu,\mu,c):=\int_{\C}\frac{r}{|c|^2(|z|^2+r^2)}
I_s\left(\frac{4\pi |\nu|r}{|c|^2(|z|^2+r^2)}\right)
e^{-2\pi i \tr\left(\nu\tfrac{\overline{z}}{c^2(|z|^2+r^2)}+\mu z\right)}dz.
\end{align*}
By Lemma \ref{Gintegral}, 
we have
\begin{align*}
\I(r,\nu,\mu,c)=
\begin{cases}
\dfrac{2^s\pi^{s+1}|\nu|^s}{|c|^{2(s+1)}s^2\Gamma(s)}r^{1-s}, & \text{ if }\mu =0,\\[0.5cm]
\dfrac{2\pi}{|c|^2}\J_s\left(\frac{\nu \mu}{c^2}\right) rK_s(4\pi |\mu |r), & \text{ if }\mu \neq 0.
\end{cases}
\end{align*}
Summing up, we conclude 
\begin{align*}
b_{0,\kappa,\nu}(r,s)&=\frac{1}{\mathrm{covol}(\Lambda_{\xi})}
\frac{2^s\pi^{s+1}|\nu|^s}{s^2\Gamma(s)}r^{1-s}
\sum_{\begin{psmallmatrix}a&*\\c&* \end{psmallmatrix} \in \sigma_{\kappa}^{-1}\mathcal{R}_{\kappa,\xi}\sigma_{\xi}}\frac{e^{2\pi i \tr( \nu \frac{a}{c})}}{|c|^{2s+2}}\\
&=\frac{1}{\mathrm{covol}(\Lambda_{\xi})}
\frac{2^s\pi^{s+1}|\nu|^s}{s^2\Gamma(s)}r^{1-s}
\sum_{\begin{psmallmatrix}*&*\\c&d \end{psmallmatrix} \in \sigma_{\xi}^{-1}\mathcal{R}_{\xi,\kappa}\sigma_{\kappa}}\frac{e^{2\pi i \tr( -\nu \frac{d}{c})}}{|c|^{2s+2}}.
\end{align*}
Recalling definition \eqref{def_varphiscattering}, we get that
\begin{align*}
b_{0,\kappa,\nu}(r,s)
&=\frac{\mathrm{covol}(\Lambda_{\kappa})}{\mathrm{covol}(\Lambda_{\xi})}
\frac{2^s\pi^{s}|\nu|^s}{s\Gamma(s)}r^{1-s}
\varphi_{\xi,\kappa}(-\nu;s),
\end{align*}
as asserted. Furthermore, for $\mu\not=0$, we conclude
\begin{align*}
b_{\mu,\kappa,\nu}(r,s)&=\frac{2\pi}{\mathrm{covol}(\Lambda_{\xi})}
rK_s(4\pi |\mu |r)
\sum_{\begin{psmallmatrix}a&*\\c&d \end{psmallmatrix} \in \sigma_{\kappa}^{-1}\mathcal{R}_{\kappa,\xi}\sigma_{\xi}}
\frac{e^{2\pi i \tr(\nu\frac{a}{c}+ \mu \frac{d}{c})}}{|c|^2}\J_s\left(\frac{\nu \mu}{c^2}\right) \\
&= \mathcal{B}_{\kappa,\xi}(\nu,\mu;s)rK_s(4\pi |\mu |r).
\end{align*}
This completes the proof.
\end{proof}

\section{Analytic continuation}\label{section-analytic-continuation}
The main goal of this section is to prove the meromorphic continuation of the
Green's function via its Fourier expansion. We remark here that the existence 
of this meromorphic continuation is well-known and follows from the spectral 
expansion of the Green's function (see, e.g., \cite{EGM}, Proposition 4.6, p.~285). 
Here we choose a different approach as we also need precise information about the 
growth at the cusp $\infty$ of this meromorphic continuation. In order to do this, 
we first analytically continue the Niebur--Poincar\'e series $F_{\infty,\nu}(P,s)$ 
by using the explicit Fourier expansion given in Proposition \ref{FENgeneral} 
with $\xi=\infty$. Before doing so, we need the following result.

\begin{lemma}\label{Lbound}
We have the bounds
\begin{align*}
|\J_s(z)|=
\begin{cases}
O\left(| z|^{\Re(s)}\right), & \text{ for }0<|z|\leq 1,\\[0.1cm]
O\left(e^{8\pi \sqrt{|z|}}|z|^{\Re(s)}\right), & \text{ for } |z|> 1,
\end{cases}
\end{align*}
holding uniformly for $s$ in any compact set contained in $\mathrm{Re}(s)>-1/2$.
\end{lemma}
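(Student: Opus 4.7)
\medskip

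\textbf{Proof plan.} The function $\J_s(z)$ is a product of two Bessel-type factors in the variables $4\pi\sqrt{z}$ and $4\pi\sqrt{\overline{z}}$. Since both factors have the same modulus after taking absolute values (their arguments differ only by complex conjugation of the inner variable), my plan is to bound each factor separately and multiply. I will treat the regions $\Re(z)\ge 0$ and $\Re(z)\le 0$ symmetrically, using the $J_s$-representation in the former and the $I_s$-representation in the latter. The bound on the common boundary $\Re(z)=0$ then follows from either case by the consistency identity $I_s(w)=e^{\mp s\pi i/2}J_s(we^{\pm \pi i/2})$ recalled right before the lemma.

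\medskip

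\textbf{Small $|z|$.} Here I would use the standard power series
\[
J_s(w)=\sum_{k=0}^{\infty}\frac{(-1)^{k}}{k!\,\Gamma(s+k+1)}\left(\frac{w}{2}\right)^{s+2k},\qquad
I_s(w)=\sum_{k=0}^{\infty}\frac{1}{k!\,\Gamma(s+k+1)}\left(\frac{w}{2}\right)^{s+2k},
\]
substituting $w=4\pi\sqrt{z}$ with the principal branch of the square root. This writes
\[
J_s(4\pi\sqrt{z})=(2\pi\sqrt{z})^{s}\,\sum_{k=0}^{\infty}\frac{(-1)^{k}(4\pi^{2}z)^{k}}{k!\,\Gamma(s+k+1)},
\]
and the series is uniformly bounded for $|z|\le 1$ and $s$ in a compact set. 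Since $|(2\pi\sqrt{z})^{s}|\ll|z|^{\Re(s)/2}$ (the argument of $\sqrt{z}$ is confined to $[-\pi/4,\pi/4]$, so the imaginary part of $s\log(2\pi\sqrt z)$ is controlled uniformly), one factor contributes $O(|z|^{\Re(s)/2})$, and multiplying the two gives $O(|z|^{\Re(s)})$. The identical argument with $I_s$ handles the region $\Re(z)\le 0$.

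\medskip

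\textbf{Large $|z|$.} Here I would invoke the classical asymptotic expansions of Bessel functions, valid uniformly for $s$ in compact sets. For $J_s$ one has, in any sector $|\arg w|\le\pi-\delta$,
\[
J_s(w)=\sqrt{\tfrac{2}{\pi w}}\cos\!\left(w-\tfrac{s\pi}{2}-\tfrac{\pi}{4}\right)+O\!\left(|w|^{-3/2}\right),
\]
so that $|J_s(w)|\ll |w|^{-1/2}e^{|\Im w|}$. When $\Re(z)\ge 0$ and $|z|$ is large, $w=4\pi\sqrt{z}$ lies in the sector $|\arg w|\le\pi/4$ and $|\Im w|\le 4\pi\sqrt{|z|}$; an identical bound holds for $J_s(4\pi\sqrt{\overline{z}})$. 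Multiplying, and noting that $|z|^{-1/2}\le|z|^{\Re(s)}$ for $|z|>1$ once $\Re(s)>-1/2$, produces the claimed $O(e^{8\pi\sqrt{|z|}}|z|^{\Re(s)})$. Analogously, for $\Re(z)\le 0$, I use $|I_s(w)|\ll |w|^{-1/2}e^{|\Re w|}$ with $w=4\pi\sqrt{-z}$; since $|\Re w|\le 4\pi\sqrt{|z|}$, the same conclusion follows.

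\medskip

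\textbf{Uniformity and main obstacle.} Throughout, the implied constants depend only on the compact set of $s$-values, because the series in the small-$|z|$ case converges absolutely and uniformly, and the remainder terms in the Bessel asymptotics are locally uniform in $s$. The only mild subtlety, which I view as the main place where care is required, is the uniformity of the asymptotic $J_s(w)=\sqrt{2/(\pi w)}\cos(w-s\pi/2-\pi/4)+O(|w|^{-3/2})$ as $s$ varies in a compact set and $w$ varies in a fixed sector strictly contained in $|\arg w|<\pi$: this is a standard consequence of the integral representations (or Hankel's contour integral), but one must ensure that the error term is controlled by $|z|^{\Re(s)}$ at the transition $|z|\sim 1$, which is guaranteed by the constraint $\Re(s)>-1/2$ entering precisely through the comparison $|z|^{-1/2}\le |z|^{\Re(s)}$.
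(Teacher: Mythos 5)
Your proof is correct and follows essentially the same route as the paper: the small-argument behaviour $J_s(w), I_s(w)\sim (w/2)^s/\Gamma(s+1)$ for $0<|z|\le 1$, and the large-argument Bessel asymptotics (yielding $e^{|\Im w|}$ resp.\ $e^{|\Re w|}$ growth with $w=4\pi\sqrt{\pm z}$) for $|z|>1$, with the hypothesis $\Re(s)>-1/2$ absorbing the algebraic prefactor. The only cosmetic difference is that the paper states its large-$z$ bounds \eqref{Iinfinity}--\eqref{Jinfinity} already in the weakened form $O(|w|^{\Re(s)}e^{\cdots})$, whereas you keep the sharper $|w|^{-1/2}$ and weaken at the end.
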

\begin{proof}
Using the asymptotic formula \eqref{IJasympt}, we conclude
$$\J_s(z)\sim \frac{|4\pi^2 z|^s}{\Gamma(s+1)^2},$$
for $z\to 0$.
This implies the first bound. In order to obtain the second bound, we use \eqref{Iinfinity} 
and get
$$
I_s(4\pi \sqrt{-z})I_s(4\pi \sqrt{-\overline{z}})=O\left(|z|^{\Re(s)}
e^{8\pi \Re(\sqrt{-z})
}
\right),
$$
for $z\to \infty,\Re(z)\leq 0$. On the other hand, formula \eqref{Jinfinity}
gives 
$$
J_s(4\pi \sqrt{z})J_s(4\pi \sqrt{\overline{z}})=O\left(|z|^{\Re(s)}
e^{8\pi |\Im(\sqrt{z})|}
\right),
$$
for $z\to \infty, \Re(z)\geq 0$. The second bound follows easily from these estimates. Since the used asymptotic formulas and bounds are uniform for $s$ in any compact set contained in $\Re(s)>-1/2$, we conclude that these estimates are also uniform. This completes the proof of the Lemma.
\end{proof}

Given $\nu,\mu\in \mathcal{D}^{-1}$ both non zero and $s\in \C$ define 
$$
\mathcal{Z}(\nu,\mu;s)
:=\sum_{\substack{c\in \mathcal{O}_K/\{\pm 1\}\\ c\neq 0}} 
\frac{|\mathcal{S}(\nu,\mu,c)|}{|c|^{2+2s}},
$$
where
$$\mathcal{S}(\nu,\mu,c):=\sum_{\substack{u,u^{\ast}\in \mathcal{O}_K/c\mathcal{O}_K\\ uu^{\ast}=1}}e^{2\pi i \tr((u\nu +u^{\ast}\mu)/c)}.$$
By using the trivial bound for $|\mathcal{S}(\nu,\mu,c)|$, namely $|\mathcal{S}(\nu,\mu,c)|\leq \Norm(c)=|c|^2$, one sees that the series $\mathcal{Z}(\nu,\mu;s)$ converges absolutely for $\mathrm{Re}(s)>1$.

\begin{lemma}\label{contLZseries}
The series $\mathcal{Z}(\nu,\mu;s)$ converges absolutely for $\Re(s)>1/2$. Moreover, there exists $\alpha>0$ such that the bound 
$$|\mathcal{Z}(\nu,\mu;s)|=O\left(\Norm(\nu \mu \mathcal{D}^2)^{\alpha}\right)$$
holds uniformly for $s$ in any compact set contained in $\Re(s)> 1/2$.
\end{lemma}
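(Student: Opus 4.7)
The plan is to apply a Weil-type bound for the imaginary quadratic Kloosterman sum $\mathcal{S}(\nu,\mu,c)$ and then carry out a standard Dirichlet-series majoration over the lattice $\mathcal{O}_K$.

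First, I would invoke the $\mathcal{O}_K$-analogue of Weil's bound for Kloosterman sums (see, e.g., \cite{EGM}, Chapter 5), which asserts that
\begin{align*}
|\mathcal{S}(\nu,\mu,c)| \;\leq\; d((c))\,\Norm\!\bigl(\gcd((\nu\mathcal{D}),(\mu\mathcal{D}),(c))\bigr)^{1/2}\,|c|,
\end{align*}
where $d(\mathfrak{c})$ denotes the number of integral ideal divisors of $\mathfrak{c}$. Since the greatest common divisor divides each of its arguments, I can use the crude but uniform estimate
\begin{align*}
\Norm\!\bigl(\gcd((\nu\mathcal{D}),(\mu\mathcal{D}),(c))\bigr) \;\leq\; \Norm\!\bigl(\gcd((\nu\mathcal{D}),(\mu\mathcal{D}))\bigr) \;\leq\; \Norm(\nu\mathcal{D})^{1/2}\Norm(\mu\mathcal{D})^{1/2} \;=\; \Norm(\nu\mu\mathcal{D}^{2})^{1/2}
\end{align*}
to completely detach the $\nu,\mu$-dependence from the $c$-dependence, absorbing it into a single factor $\Norm(\nu\mu\mathcal{D}^{2})^{1/4}$.

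Next, I would substitute this bound into the defining series for $\mathcal{Z}(\nu,\mu;s)$. Since each nonzero principal ideal $(c)\subset\mathcal{O}_K$ has exactly $|\mathcal{O}_K^{\times}|/2$ generators modulo $\pm 1$, re-indexing the sum by principal ideals and dominating by all integral ideals yields
\begin{align*}
|\mathcal{Z}(\nu,\mu;s)| \;\ll\; \Norm(\nu\mu\mathcal{D}^{2})^{1/4}\,\sum_{\mathfrak{c}\neq(0)}\frac{d(\mathfrak{c})}{\Norm(\mathfrak{c})^{(1+2\Re(s))/2}} \;=\; \Norm(\nu\mu\mathcal{D}^{2})^{1/4}\,\zeta_K\!\Bigl(\tfrac{1}{2}+\Re(s)\Bigr)^{2},
\end{align*}
where I used the standard identity $\sum_{\mathfrak{c}}d(\mathfrak{c})\Norm(\mathfrak{c})^{-s}=\zeta_{K}(s)^{2}$. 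Because $\zeta_{K}(\tfrac{1}{2}+s)^{2}$ is holomorphic on $\Re(s)>1/2$, it is bounded on every compact subset of that half-plane, and the majorant provides uniform absolute convergence of $\mathcal{Z}(\nu,\mu;s)$ on the same compacts. This gives the lemma with the explicit exponent $\alpha=1/4$.

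The main obstacle is the Weil-type bound for the $\mathcal{O}_K$-Kloosterman sum: the crude bound $|\mathcal{S}(\nu,\mu,c)|\leq |c|^{2}$ would only give convergence for $\Re(s)>1$ and would not suffice here, so the non-trivial square-root cancellation is essential to reach the abscissa $1/2$. Once that input is secured from the literature, the remainder of the argument is a routine comparison against a Dedekind zeta function.
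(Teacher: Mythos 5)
Your argument is correct, and it reaches the same conclusion by a slightly different route than the paper. The paper does not invoke the ideal-theoretic Weil bound termwise; instead it quotes the Euler-product majorant from the proof of Proposition 3.4 in Sarnak's paper,
\begin{align*}
\sum_{c\neq 0}\frac{|\mathcal{S}(\nu,\mu,c)|}{|c|^{2+2\sigma}}
\leq \frac{|\mathcal{O}_K^{\times}|}{2}
\prod_{P\supseteq \nu\mu\mathcal{D}^2}\bigl(1-\Norm(P)^{-\sigma}\bigr)^{-1}
\prod_{P}\Bigl(1+2\Norm(P)^{-\frac12-\sigma}+\tfrac{\Norm(P)^{-2\sigma}}{1-\Norm(P)^{-\sigma}}\Bigr),
\end{align*}
whose second factor converges for $\sigma>1/2$ and whose first factor is controlled by the number of prime divisors of $\nu\mu\mathcal{D}^2$, yielding an unspecified polynomial exponent $\alpha$. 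Both arguments rest on exactly the same arithmetic input, namely square-root cancellation in $\mathcal{S}(\nu,\mu,c)$ at prime-power moduli; you package it as the global Weil bound $|\mathcal{S}(\nu,\mu,c)|\leq d((c))\,\Norm(\gcd((\nu\mathcal{D}),(\mu\mathcal{D}),(c)))^{1/2}|c|$ and compare with $\zeta_K(\Re(s)+\tfrac12)^2$, which is clean and has the advantage of producing the explicit exponent $\alpha=1/4$, uniformly in $\Re(s)$ on compacta. The only point to be careful about is the provenance of that global Weil bound: it is standard for imaginary quadratic fields (it follows by multiplicativity from the prime-power case, which is precisely what Sarnak establishes), but you should cite a source that actually states it in this ideal-theoretic form with the $d((c))$ and gcd factors, rather than gesturing at \cite{EGM}; the safest reference here is Sarnak \cite{SAR} itself, which is what the paper uses. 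Your intermediate steps (the estimate $\Norm(\gcd)^{1/2}\leq\Norm(\nu\mu\mathcal{D}^2)^{1/4}$, the reindexing over principal ideals with multiplicity $|\mathcal{O}_K^{\times}|/2$, and the identity $\sum_{\mathfrak{c}}d(\mathfrak{c})\Norm(\mathfrak{c})^{-s}=\zeta_K(s)^2$) are all correct.
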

\begin{proof}
This result is essentially due to Sarnak. Indeed, from the proof of Proposition 3.4 in \cite{SAR} 
we have 
$$\sum_{\substack{c\in \mathcal{O}_K/\{\pm 1\}\\ c\neq 0}}  \frac{|\mathcal{S}(\nu,\mu,c)|}{|c|^{2+2\sigma}} 
\leq
\frac{|\mathcal{O}_K^{\times}|}{2}
\prod_{\substack{P\subset \mathcal{O}_K\\  \nu\mu\mathcal{D}^2\subseteq P}}\left(1-\Norm(P)^{-\sigma}\right)^{-1} \prod_{\substack{P\subset \mathcal{O}_K\\ P \neq (0)}}\left(1+2\Norm(P)^{-\frac{1}{2}-\sigma}+\frac{\Norm(P)^{-2\sigma}}{1-\Norm(P)^{-\sigma}}\right),
$$
where $\sigma=\Re(s)$ and the products run over prime ideals $P\subset \mathcal{O}_K$. The infinite product
$$\prod_{\substack{P\subset \mathcal{O}_K\\ P \neq (0)}}\left(1+2\Norm(P)^{-\frac{1}{2}-\sigma}+\frac{\Norm(P)^{-2\sigma}}{1-\Norm(P)^{-\sigma}}\right) $$
converges for $\sigma>1/2$, proving the absolute convergence of $\mathcal{Z}(\nu,\mu,s)$ for $\Re(s)>1/2$. On the other hand, since the function $x\mapsto (1-x^{-\sigma})^{-1}$ is decreasing for $x>1$ and $\Norm(P)\geq 2$ for any prime ideal $P$, we have
$$\prod_{\substack{P\subset \mathcal{O}_K\\  \nu\mu\mathcal{D}^2\subseteq P}}\left(1-\Norm(P)^{-\sigma}\right)^{-1}\leq (1-2^{-\sigma})^{-\ell},$$
where  $\ell$ is the number of prime ideals dividing $\nu\mu \mathcal{D}^2$. But $\ell\leq 2\omega^{\#}(\Norm(\nu \mu \mathcal{D}^2))$, where $\omega^{\#}(n)$ is the number of prime divisors of $n\in \mathbb{N}$. It is known that $\omega^{\#}(n)=O(\log(n))$, which gives $(1-2^{-\sigma})^{-\ell} \leq \Norm(\nu \mu \mathcal{D}^2)^{\alpha}$ for some $\alpha>0$    depending on $\sigma$. Moreover, one can choose $\alpha>0$ such that this bound holds uniformly for $\sigma$ in any fixed compact set contained in $\sigma>1/2$. This implies the desired bound for $|\mathcal{Z}(\nu,\mu,s)|$.
\end{proof}

\begin{lemma}\label{Bbound}
Let $\nu,\mu\in \mathcal{D}^{-1}$ both non zero. Then, the series $\mathcal{B}_{\infty,\infty}(\nu,\mu;s)$ 
converges absolutely for $\Re(s)>1/2$ and the bound
\begin{align*}
|\mathcal{B}_{\infty,\infty}(\nu,\mu;s)|=
O\left(  e^{8\pi \sqrt{|\nu \mu |}}|\nu \mu|^{\Re(s)+1}  \right)
\end{align*}
holds uniformly for $s$ in any compact set contained in $\Re(s)>1/2$.
\end{lemma}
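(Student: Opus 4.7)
My strategy is to organize the double-coset sum defining $\mathcal{B}_{\infty,\infty}(\nu,\mu;s)$ into a single sum over $c\in\mathcal{O}_K\setminus\{0\}$ of Kloosterman-type sums $\mathcal{S}(\nu,\mu,c)$ weighted by $\J_s(\nu\mu/c^2)$, and then to estimate the result by splitting at $|c|^2=|\nu\mu|$ and applying Lemmas~\ref{Lbound} and~\ref{contLZseries}.

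First, I would rewrite $\mathcal{B}_{\infty,\infty}(\nu,\mu;s)$ in a more convenient form. With $\sigma_\infty=I$ and $\sigma_\infty^{-1}\Gamma'_\infty\sigma_\infty=\bigl\{\bigl(\begin{smallmatrix}1&\lambda\\0&1\end{smallmatrix}\bigr):\lambda\in\mathcal{O}_K\bigr\}$, a Bruhat-type computation parametrizes the double coset space $\mathcal{R}_{\infty,\infty}$ by triples $(a\bmod c,\,c,\,d\bmod c)$ with $c\in\mathcal{O}_K\setminus\{0\}$ and $ad\equiv 1\pmod{c\mathcal{O}_K}$, identified modulo the simultaneous sign change $(a,c,d)\mapsto(-a,-c,-d)$ coming from $\mathrm{PSL}_2$. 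Since $|c|^{-2}$ and $\J_s(\nu\mu/c^2)$ depend only on the class of $c$ in $\mathcal{O}_K/\{\pm 1\}$, summing the exponential factor over $(a,d)$ produces exactly $\mathcal{S}(\nu,\mu,c)$, giving
\begin{align*}
\mathcal{B}_{\infty,\infty}(\nu,\mu;s)=\frac{2\pi}{\covol(\mathcal{O}_K)}\sum_{\substack{c\in\mathcal{O}_K/\{\pm 1\}\\c\neq 0}}\frac{\mathcal{S}(\nu,\mu,c)}{|c|^2}\,\J_s\!\left(\frac{\nu\mu}{c^2}\right).
\end{align*}

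Next, I would split this sum at $|c|^2=|\nu\mu|$ and estimate each piece. On the tail $|c|^2\geq|\nu\mu|$ one has $|\nu\mu/c^2|\leq 1$, so Lemma~\ref{Lbound} gives $|\J_s(\nu\mu/c^2)|\ll|\nu\mu|^{\Re(s)}|c|^{-2\Re(s)}$ uniformly for $s$ on compacta of $\{\Re(s)>1/2\}$. The tail is thus dominated by $|\nu\mu|^{\Re(s)}\mathcal{Z}(\nu,\mu;\Re(s))$, which by Lemma~\ref{contLZseries} is $O(|\nu\mu|^{\Re(s)}\Norm(\nu\mu\mathcal{D}^2)^{\alpha})$; using $\Norm(\nu\mu\mathcal{D}^2)=|\nu\mu|^2|d_K|$ and taking $\alpha\leq 1/2$, this fits inside $O(|\nu\mu|^{\Re(s)+1})$ and simultaneously establishes absolute convergence on $\{\Re(s)>1/2\}$. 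On the head $1\leq|c|^2<|\nu\mu|$, Lemma~\ref{Lbound} gives $|\J_s(\nu\mu/c^2)|\ll e^{8\pi\sqrt{|\nu\mu|}/|c|}|\nu\mu|^{\Re(s)}|c|^{-2\Re(s)}$, and since $|c|\geq 1$ for $c\in\mathcal{O}_K\setminus\{0\}$ the exponential is bounded by $e^{8\pi\sqrt{|\nu\mu|}}$. Combined with the trivial estimate $|\mathcal{S}(\nu,\mu,c)|\leq|c|^2$, the bound $|c|^{-2\Re(s)}\leq 1$, and the lattice-point count $\#\{c\in\mathcal{O}_K/\{\pm 1\}:|c|^2\leq N\}=O(N)$, this yields a head contribution of $O(e^{8\pi\sqrt{|\nu\mu|}}|\nu\mu|^{\Re(s)+1})$. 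Adding the two pieces gives the asserted bound.

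The only real subtlety is that Lemma~\ref{contLZseries} only asserts the existence of some $\alpha>0$, whereas to match the target one needs $\alpha\leq 1/2$ so that the $|\nu\mu|$-exponent from the tail does not exceed $\Re(s)+1$. This is, however, built into its proof: the Hardy--Ramanujan bound $\omega^{\#}(n)=O(\log n/\log\log n)$ makes the estimate subpolynomial in $\Norm(\nu\mu\mathcal{D}^2)$, so $\alpha$ can be chosen arbitrarily small, comfortably handling this technicality.
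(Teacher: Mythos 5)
Your proposal is correct and follows essentially the same route as the paper: rewrite $\mathcal{B}_{\infty,\infty}(\nu,\mu;s)$ as a sum over $c$ of the Kloosterman sums $\mathcal{S}(\nu,\mu,c)$ weighted by $\J_s(\nu\mu/c^2)$, split at $|\nu\mu/c^2|=1$, bound the tail by $|\nu\mu|^{\Re(s)}\mathcal{Z}(\nu,\mu;\Re(s))$ via Lemma~\ref{Lbound} and Lemma~\ref{contLZseries}, and bound the head by the lattice-point count $\#\{c:|c|^2<|\nu\mu|\}=O(|\nu\mu|)$. Your extra care about needing $\alpha\le 1/2$ is sound but not strictly necessary, since the factor $e^{8\pi\sqrt{|\nu\mu|}}$ in the target bound absorbs any fixed power of $|\nu\mu|$ (recall $|\nu\mu|$ is bounded below on $(\mathcal{D}^{-1}\setminus\{0\})^2$).
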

\begin{proof}
We start by writing
\begin{align*}
\mathcal{B}_{\infty,\infty}(\nu,\mu;s)=\frac{2\pi}{\covol(\mathcal{O}_K)}\sum_{\substack{c\in \mathcal{O}_K/\{\pm 1\}\\ c\neq 0}}\frac{\mathcal{S}(\nu,\mu,c)}{|c|^2}\,\J_s\left(\frac{\nu \mu}{c^2}\right).
\end{align*}
For fixed $s\in \C$ with $\mathrm{Re}(s)>1/2$, Lemma \ref{Lbound} gives 
\begin{align*}
\sum_{\substack{c\in \mathcal{O}_K/\{\pm 1\}\\ c\neq 0}}\left|\frac{\mathcal{S}(\nu,\mu,c)}{c^2}\,\J_s\left(\frac{\nu \mu}{c^2}\right)\right|&=  
 O(|\nu \mu|^{\Re(s)} \sum_{|\frac{\nu \mu}{c^2}|\leq 1 }\frac{|\mathcal{S}(\nu,\mu,c)|}{|c|^{2+2\Re(s)}} \\
&\mathrel{\phantom{=}} +
   |\nu \mu|^{\Re(s)} \sum_{\left|\frac{\nu \mu}{c^2}\right|> 
1 }\frac{|\mathcal{S}(\nu,\mu,c)|}{|c|^{2+2\Re(s)}}
e^{8\pi \sqrt{|\nu \mu/c^2 |}}\bigg). 
\end{align*}
Since
\begin{align*}
\sum_{\left|\frac{\nu \mu}{c^2}\right|> 
1 }\frac{|\mathcal{S}(\nu,\mu,c)|}{|c|^{2+2\Re(s)}} e^{8\pi \sqrt{|\nu \mu/c^2 |}}
\leq  e^{8\pi \sqrt{|\nu \mu |}}  \cdot \#\left\{c\in \mathcal{O}_K:|c|^2<|\nu \mu|\right\}
=O\left(e^{8\pi \sqrt{|\nu \mu |}}|\nu \mu|\right),
\end{align*} 
we have
\begin{align*}
\sum_{\substack{c\in \mathcal{O}_K/\{\pm 1\}\\ c\neq 0}}\left|\frac{\mathcal{S}(\nu,\mu,c)}{c^2}\J_s\left(\frac{\nu \mu}{c^2}\right)\right|
=   O\left(|\nu \mu|^{\Re(s)}  \mathcal{Z}(\nu,\mu,\Re(s))+  e^{8\pi \sqrt{|\nu \mu |}}|\nu \mu|^{\Re(s)+1}\right).
\end{align*}
This together with Lemma \ref{contLZseries} implies the absolute convergence of $\mathcal{B}_{\infty,\infty}(\nu,\mu;s)$ and the desired bound for $|\mathcal{B}_{\infty,\infty}(\nu,\mu;s)|$.  This completes the proof of this Lemma.
\end{proof}

We now give the analytic continuation of the Niebur--Poincar\'e series.

\begin{proposition}\label{ACNieburPointacare}
The Niebur--Poincar\'e series $F_{\infty,\nu}(P,s)$ has an analytic continuation to $\mathrm{Re}(s)>1/2$. Moreover, for fixed $P\in \H^3$ and $\delta>1$, the bound
$$F_{\infty,\nu}(P,s)=O_{P,\delta}\left(\max\left\{|\nu|^{\Re(s)}e^{4\pi |\nu|r},|\nu|^{\Re(s)+1}e^{\frac{4\delta \pi |\nu|}{r}}\right\}\right)$$
holds uniformly for $s$ in any compact set contained in $\Re(s)>1/2$.
\end{proposition}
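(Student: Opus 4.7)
The plan is to apply Proposition \ref{FENgeneral} with $\kappa=\xi=\infty$ and analyze each of the three resulting pieces separately. Recalling $\Lambda_\infty=\mathcal{O}_K$ and $\Lambda_\infty^{*}=\mathcal{D}^{-1}$, the Fourier expansion reads
\begin{align*}
F_{\infty,\nu}(P,s)
&= rI_{s}(4\pi|\nu|r)\sum_{u\in \mathcal{O}_K^{\times}/\{\pm 1\}}e^{2\pi i\tr(\nu u^{2}z)}
+\frac{(2\pi|\nu|)^{s}}{s\Gamma(s)}\,\varphi_{\infty,\infty}(-\nu;s)\,r^{1-s}\\
&\mathrel{\phantom{=}}+\sum_{\substack{\mu\in\mathcal{D}^{-1}\\ \mu\neq 0}}\mathcal{B}_{\infty,\infty}(\nu,\mu;s)\,rK_{s}(4\pi|\mu|r)e^{2\pi i\tr(\mu z)}.
\end{align*}
I would show that each summand extends holomorphically to $\Re(s)>1/2$ and satisfies the desired bound, with constants uniform on compacta.

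The first term is entire in $s$, since $I_{s}$ is entire and the unit sum is finite. For fixed $P$ and large $|\nu|$ the asymptotic $I_{s}(x)\sim e^{x}/\sqrt{2\pi x}$ (with the polynomial behaviour near $0$ to handle bounded $|\nu|$) yields a bound $O_{P}(|\nu|^{\Re(s)}e^{4\pi|\nu|r})$, uniform for $s$ in compact subsets of $\Re(s)>-1/2$. For the second term, the coefficient $\varphi_{\infty,\infty}(-\nu;s)$ is (up to the harmless prefactor $\pi/(\covol(\mathcal{O}_K)s)$) a Ramanujan-type analogue of the series $\mathcal{Z}(\nu,\mu;s)$ of Lemma \ref{contLZseries}. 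The same Sarnak-type Euler product estimate applies when one of the arguments vanishes, giving absolute convergence and a polynomial bound $|\varphi_{\infty,\infty}(-\nu;s)|=O(|\nu|^{\alpha})$ uniformly on compacta of $\Re(s)>1/2$. Thus the second term is $O_{P}(|\nu|^{\Re(s)+\alpha})$, which is easily dominated by $|\nu|^{\Re(s)+1}e^{4\pi\delta|\nu|/r}$.

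The main obstacle is the Bessel series. Combining Lemma \ref{Bbound} with the decay $K_{s}(x)=O(x^{-1/2}e^{-x})$, each summand is bounded, uniformly for $s$ in a compact subset of $\Re(s)>1/2$, by
\[
O\!\left(r^{1/2}\,|\nu|^{\Re(s)+1}\,|\mu|^{\Re(s)+1/2}\,e^{8\pi\sqrt{|\nu\mu|}-4\pi|\mu|r}\right).
\]
The key trick is an AM--GM decoupling: for any $\delta>1$,
\[
2\sqrt{|\nu\mu|}\leq \frac{\delta}{r}|\nu|+\frac{r}{\delta}|\mu|,
\qquad\text{so}\qquad
8\pi\sqrt{|\nu\mu|}-4\pi|\mu|r\leq \frac{4\pi\delta|\nu|}{r}-4\pi|\mu|r\!\left(1-\tfrac{1}{\delta}\right).
\]
Since $1-1/\delta>0$, the residual $\mu$-sum
$\sum_{\mu\neq 0}|\mu|^{\Re(s)+1/2}\,e^{-4\pi|\mu|r(1-1/\delta)}$
is a lattice sum of a rapidly decreasing function and converges. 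This simultaneously establishes the absolute convergence of the Bessel series for $\Re(s)>1/2$ and gives the bound $O_{P,\delta}\!\left(|\nu|^{\Re(s)+1}e^{4\pi\delta|\nu|/r}\right)$ for the third term.

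Putting the three estimates together yields the claimed analytic continuation and the bound by the maximum of the two exponentials. The principal difficulty is controlling the Bessel tail: the positive exponential $e^{8\pi\sqrt{|\nu\mu|}}$ from Lemma \ref{Bbound} cannot be absorbed outright by $e^{-4\pi|\mu|r}$, and the AM--GM splitting above (parametrised by $\delta>1$) is what separates the $|\nu|$-dependent explosion from a genuinely summable tail in $\mu$, at the unavoidable cost of the factor $e^{4\pi\delta|\nu|/r}$ in the final bound.
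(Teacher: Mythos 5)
Your proposal is correct and follows essentially the same route as the paper: apply Proposition \ref{FENgeneral} with $\kappa=\xi=\infty$, bound the $I_s$ term via its asymptotics, control $\varphi_{\infty,\infty}(-\nu;s)$ by polynomial growth in $|\nu|$, and tame the Bessel tail with Lemma \ref{Bbound}, the decay of $K_s$, and exactly the AM--GM inequality $8\pi\sqrt{|\nu\mu|}-4\pi|\mu|r\leq 4\pi\delta|\nu|/r-4\pi(1-\delta^{-1})|\mu|r$ that the paper uses. The only cosmetic difference is that you justify the polynomial bound on $\varphi_{\infty,\infty}(-\nu;s)$ by a Sarnak-type Euler product estimate rather than by citing its expression through generalized divisor sums; both are valid.
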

\begin{proof}
By Proposition \ref{FENgeneral}, we have
\begin{align}\label{FENinfinity}
\F_{\infty,\nu}( P,s) &= rI_{s}(4\pi  |\nu| r) \sum_{u\in \mathcal{O}_K^{\times}/\{\pm 1\} }
e^{2\pi i \tr( \nu u^2z )}  +\frac{(2\pi|\nu|)^s}{s\Gamma(s)}r^{1-s}
\varphi_{\infty,\infty}(-\nu;s)\\ 
& \mathrel{\phantom{=}} + \sum_{\substack{\mu \in \Lambda_{\xi}^{\ast}\\ \mu\neq 0}}
\mathcal{B}_{\infty,\infty}(\nu,\mu;s)  rK_s(4\pi |\mu|r) e^{2\pi i \tr(\mu z)}. \nonumber
\end{align}
For $s$ in a fixed compact set in $\mathrm{Re}(s)>1/2$ we have, by Lemma \ref{Bbound} and \eqref{Kasymptotic}, the bound
\begin{eqnarray*}
 \sum_{\substack{\mu \in \Lambda_{\xi}^{\ast}\\ \mu\neq 0}}
|\mathcal{B}_{\infty,\infty}(\nu,\mu;s)  rK_s(4\pi |\mu|r)| =  O\bigg(\sqrt{r}|\nu|^{\sigma+1}\sum_{\substack{\mu\in \mathcal{D}^{-1}\\ \mu\neq 0}}
|\mu|^{\sigma+1/2}e^{8\pi \sqrt{|\nu||\mu|}}e^{-4\pi |\mu|r}\bigg),
\end{eqnarray*}
where $\sigma=\Re(s)$. The inequality
$$
e^{8\pi \sqrt{|\nu| |\mu|}}e^{-4\pi |\mu| r}\leq 
e^{\frac{4\delta |\nu| \pi }{r}}e^{-\pi (1-\delta^{-1}) |\mu| r}, 
$$
which holds for $ \delta>1$, gives
\begin{align}\label{boundNP2}
 \sum_{\substack{\mu \in \Lambda_{\xi}^{\ast}\\ \mu\neq 0}}
\left|\mathcal{B}_{\infty,\infty}(\nu,\mu;s)  rK_s(4\pi |\mu|r)\right|=O_s\bigg(\sqrt{r}|\nu|^{\sigma+1}e^{\frac{4\delta |\nu| \pi }{r}}
\sum_{\substack{\mu\in \mathcal{D}^{-1}\\ \mu\neq 0}}|\mu|^{\sigma+1/2}e^{-\pi (1-\delta^{-1}) |\mu| r}\bigg).
\end{align}
In particular, the series on the left hand side converges. This, together with the Fourier expansion \eqref{FENinfinity} and the analytic continuation of $\varphi_{\infty,\infty}(-\nu;s)$,  
give the analytic continuation of $F_{\infty,\nu}(P,s)$. Now, by the asymptotic bound \eqref{Iinfinity}, we have
\begin{equation}\label{boundconstanttermNP}
rI_{s}(4\pi  |\nu| r) =O\left(r^{\Re(s)+1}|\nu|^{\Re(s)}e^{4\pi |\nu|r}\right).
\end{equation} 
On the other hand, as mentioned in the Introduction, the function $\varphi_{\infty,\infty}(-\nu;s)$ 
can be expressed in terms of certain generalized divisors sums and it therefore
has at most polynomial growth with respect to $|\nu|$, 
uniformly for $s$ in any fixed compact set contained in $\Re(s)>0$.
This, together with \eqref{boundNP2} and \eqref{boundconstanttermNP}, gives the result on the growth 
of $|F_{\infty,\nu}(P,s)|$. This completes the proof of the Proposition.
\end{proof}

We can now state the existence  of the meromorphic continuation of $G_s(P,Q)$ together with precise information about its growth at the cusp $\infty$.

\begin{theorem}\label{GreenAC}
For fixed $P,Q\in \H^3$ with $r=r(P)>\max \{r(Q),r(Q)^{-1}\}$,
the automorphic Green's function $G_s(P,Q)$ has an analytic continuation to 
$\mathrm{Re}(s)>1/2$, $s\neq 1$, with a simple pole at $s=1$. Moreover, 
we have
\begin{equation}\label{Greenasymptotic}
\lim_{s\to 1}\bigg(G_s(P,Q)-\frac{1}{\covol(\mathcal{O}_K)}E_{\infty}(Q,s)\bigg)=-\frac{1+\log(r)}{\mathrm{vol}(X)}+o\big(1\big),
\end{equation}
as $r\to\infty$.
\end{theorem}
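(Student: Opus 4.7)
The strategy is to derive both statements from the Fourier expansion of $G_s(P,Q)$ at the cusp $\infty$ provided by Proposition \ref{prop-fourier-expansion-greens}. With $\xi=\infty$ (so $\sigma_\infty=\mathrm{id}$, $\Lambda_\infty=\mathcal{O}_K$, and $\Lambda_\infty^\ast=\mathcal{D}^{-1}$), and noting that a direct matrix computation gives $r(\gamma Q)\le \max\{r(Q),r(Q)^{-1}\}$ for every $\gamma\in\Gamma$ (the case $c=0$ yields $r(\gamma Q)=r(Q)$, while $|c|\ge 1$ for $c\in\mathcal{O}_K\setminus\{0\}$ forces $r(\gamma Q)\le 1/r(Q)$), the hypothesis $r>\max\{r(Q),r(Q)^{-1}\}$ ensures $r>r(\gamma Q)$ for all $\gamma$. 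Hence, for $\Re(s)>1$,
\begin{equation*}
G_s(P,Q)=\frac{1}{\covol(\mathcal{O}_K)}\left(\frac{r^{1-s}}{s}\,E_\infty(Q,s)+2\!\!\sum_{\substack{\mu\in\mathcal{D}^{-1}\\ \mu\neq 0}}\!\! F_{\infty,-\mu}(Q,s)\,rK_s(4\pi|\mu|r)\,e^{2\pi i \tr(\mu z)}\right).
\end{equation*}
The Eisenstein factor continues meromorphically to $\mathbb{C}$ with only a simple pole at $s=1$ of residue given by \eqref{resEis}, while each Niebur--Poincar\'e series $F_{\infty,-\mu}(Q,s)$ extends analytically to $\Re(s)>1/2$ by Proposition \ref{ACNieburPointacare}.

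The central step for the continuation is to show that the Fourier sum converges absolutely and uniformly on compacta in $\Re(s)>1/2$. Combining the polynomial--exponential bound of Proposition \ref{ACNieburPointacare} for $|F_{\infty,-\mu}(Q,s)|$ (which is uniform in $s$ on compacta of $\Re(s)>1/2$) with the classical asymptotic $K_s(x)\sim\sqrt{\pi/(2x)}\,e^{-x}$ applied to $x=4\pi|\mu|r$, each term is dominated by a constant multiple of $\sqrt{r/|\mu|}\,|\mu|^{\Re(s)+1}\,e^{-4\pi|\mu|(r-r(Q))}$ plus an analogous expression with $r(Q)$ replaced by $\delta/r(Q)$, for any fixed $\delta>1$. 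Since $r\cdot r(Q)>1$, one can fix $\delta>1$ close enough to $1$ so that both exponents become strictly negative and linear in $|\mu|$. This yields the claimed convergence and, therefore, the meromorphic continuation of $G_s(P,Q)$ to $\Re(s)>1/2$ with a unique simple pole at $s=1$.

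To obtain the asymptotic \eqref{Greenasymptotic}, I subtract $\frac{1}{\covol(\mathcal{O}_K)}E_\infty(Q,s)$ from the Fourier expansion, which leaves
\begin{equation*}
G_s(P,Q)-\frac{E_\infty(Q,s)}{\covol(\mathcal{O}_K)}=\frac{1}{\covol(\mathcal{O}_K)}\!\left(\frac{r^{1-s}}{s}-1\right)\!E_\infty(Q,s)+\frac{2}{\covol(\mathcal{O}_K)}\!\sum_{\substack{\mu\in\mathcal{D}^{-1}\\ \mu\neq 0}}\!\! F_{\infty,-\mu}(Q,s)\,rK_s(4\pi|\mu|r)\,e^{2\pi i \tr(\mu z)}.
\end{equation*}
A Taylor expansion gives $\tfrac{r^{1-s}}{s}-1=-(1+\log r)(s-1)+O((s-1)^2)$ as $s\to 1$; multiplying against the Laurent expansion $E_\infty(Q,s)=\tfrac{\covol(\mathcal{O}_K)/\vol(X)}{s-1}+O(1)$ kills the pole and produces the finite limit $-(1+\log r)/\vol(X)$, which is precisely the leading term of \eqref{Greenasymptotic}. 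The Fourier sum at $s=1$ is $o(1)$ as $r\to\infty$ by the same exponential-decay estimates used above: it is dominated, for $\delta>1$ close to $1$, by $\sqrt{r}\sum_{\mu\neq 0}|\mu|^{3/2}e^{-\eta|\mu|r}$ for some $\eta>0$ depending on $Q$ and $\delta$, which tends to $0$ as $r\to\infty$.

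The principal technical obstacle is the interchange of $\lim_{s\to 1}$ with the infinite Fourier sum required in the final step. This demands that both the bound of Proposition \ref{ACNieburPointacare} and the Bessel asymptotic be uniform in $s$ on a compact neighborhood of $s=1$. Since both uniformities are in place on any compact subset of $\Re(s)>1/2$, dominated convergence justifies the interchange, the $o(1)$ behavior passes to the limit, and the proof of \eqref{Greenasymptotic} is complete.
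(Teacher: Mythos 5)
Your proof is correct and follows essentially the same route as the paper: expand $G_s(P,Q)$ at the cusp $\infty$ via Proposition \ref{prop-fourier-expansion-greens}, control the nonzero Fourier modes uniformly on compacta in $\Re(s)>1/2$ using Proposition \ref{ACNieburPointacare} together with the decay of $K_s$ and a choice of $\delta>1$ with $r>\max\{r(Q),\delta\,r(Q)^{-1}\}$, and then extract the $-(1+\log r)/\vol(X)$ term from $\bigl(\tfrac{r^{1-s}}{s}-1\bigr)E_\infty(Q,s)$ against the simple pole of the Eisenstein series. Your explicit verification that $r>\max\{r(Q),r(Q)^{-1}\}$ forces $r>r(\gamma Q)$ for all $\gamma\in\Gamma$ is a detail the paper leaves implicit, but the argument is otherwise the same.
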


Note that the analyticity of $G_s(P,Q)$ for $\Re(s)>\frac{1}{2}$, $s\neq 1$, is equivalent to Sarnak's lower bound for the first ``exceptional'' discrete eigenvalue of the Laplacian on $X$ (\cite{SAR}, Theorem 3.1).

\begin{proof}
Let $P,Q\in \H^3$ and assume that $r=r(P)>\max\{r(Q),r(Q)^{-1}\}$. By Theorem \ref{prop-fourier-expansion-greens}, we have
\begin{equation*}
G_s(P,Q)=\frac{1}{\covol(\mathcal{O}_K)}\bigg(
 \frac{r^{1-s}}{s}E_{\infty}(Q,s)+2\sum_{\substack{\mu \in \mathcal{D}^{-1}\\ \mu\neq 0}} F_{\infty,-\mu}(Q,s)   \, r K_{s}(4\pi  |\mu| r)  e^{2\pi i \tr (\mu z )}\bigg).
\end{equation*}
By Proposition \ref{ACNieburPointacare}, together with the asymptotic bound \eqref{Kasymptotic}, we have
$$\sum_{\substack{\mu \in \mathcal{D}^{-1}\\ \mu\neq 0}} |F_{\infty,-\mu}(Q,s)   \, r K_{s}(4\pi  |\mu| r)|=O_{Q,\delta}\bigg(\sqrt{r} \sum_{\substack{ \mu \in \mathcal{D}^{-1}\\ \mu\neq 0}}|\mu|^{\Re(s)-\frac{1}{2}} e^{-4\pi |\mu|r}  \max\left\{e^{4\pi |\mu|r(Q)},|\mu|e^{\frac{4\delta \pi |\mu|}{r(Q)}}\right\}\bigg)$$
for any $\delta>1$, uniformly for $s$ in any compact set contained in $\Re(s)>1/2$. Choosing $\delta$ such that $r>\max\{r(Q),\delta r(Q)^{-1}\}$ we conclude that the series on the left hand side is convergent. This proves that $G_s(P,Q)$ has a meromorphic continuation to $\Re(s)>1/2$ having poles only where $E_{\infty}(P,s)$ has poles, in which case the multiplicities also agree. Since $E_{\infty}(P,s)$ admits an analytic continuation to $\Re(s)>0$, $s\neq 1$, with a simple pole at $s=1$, we conclude the same for $G_s(P,Q)$. Now, we note that the above computations also give
\begin{equation}\label{Greenasymp1}
\lim_{s\to 1}\bigg(G_s(P,Q)-\frac{r^{1-s}}{\covol(\mathcal{O}_K)s}E_{\infty}(Q,s)\bigg)=o\big(1\big),
\end{equation}
as $r\to\infty$.
A straight-forward computation using \eqref{ResEisInfinity} gives
\begin{equation}\label{Eisdifference}
\lim_{s\to 1}\bigg(\frac{r^{1-s}}{\covol(\mathcal{O}_K)s}E_{\infty}(Q,s)-\frac{1}{\covol(\mathcal{O}_K)}E_{\infty}(Q,s)\bigg)=-\frac{1+\log(r)}{\vol(X)}.
\end{equation}
Formula \eqref{Greenasymptotic} follows by combining \eqref{Greenasymp1} with \eqref{Eisdifference}. This completes the proof of the Theorem.
\end{proof}

\section{Proof of the main theorem}\label{section_proofofmainthm}

To prove our main theorem, we first introduce a building block for the class of functions 
in $\mathcal{A}$. More precisely, for $P,Q\in \mathbb{H}^3$ with $P\not=\gamma Q$ for any $\gamma\in\Gamma$, 
we define
\begin{align}\label{def_analogj}
\L(P,Q):=\lim_{s\to 1}\left(
G_s(P,Q)-\frac{1}{\covol(\mathcal{O}_K)}
\Big(E_{\infty}(Q,s)
+
E_{\infty}(P,s) 
-
\varphi_{\infty,\infty}(0;s) \Big)\right).
\end{align}
Recalling \eqref{Greenpole} and \eqref{resEis},
the above limit exists. This function can be seen as the analogue of $\log|j(\tau_1)-j(\tau_2)|$ (see Proposition 5.1 in \cite{GZ}).
The next lemma summarizes the properties of the function $\L(P,Q)$.

\begin{lemma}\label{lemm_properties_J} 
The function $\L(P,Q)$ satisfies the following properties:
\begin{enumerate}
\item[$(\L1)$] The function $\L(P,Q)$ is $\Gamma$-invariant in each variable
and can therefore be considered as a function on $X\times X$. Moreover, we have $\L(P,Q)=\L(Q,P)$.
\item[$(\L2)$] For fixed $Q\in X$, 
we have a singularity of the form
\begin{equation*}
\L(P,Q)= \frac{\nu(Q)}{2\pi} \frac{ r(Q) }{ \|P-Q\|}+O_Q(1), 
\end{equation*}
as $P \to Q$, and the function $P\mapsto \L(P,Q)$ is smooth at any point $P\in X$ with $P\neq Q$. 
\item[$(\L3)$] For $P,Q\in X$ with $P\neq Q$, we have $\Delta_{P}\L(P,Q)=0$.
\item[$(\L4)$] For fixed $Q\in X$, we have
$$\L(P,Q)=-\frac{1}{\mathrm{vol}(X)}-\frac{|\mathcal{O}_K^{\times}|}{2\covol(\mathcal{O}_K)}r^2+o(1),$$
as $r=r(P)\to \infty$.
\end{enumerate}
\end{lemma}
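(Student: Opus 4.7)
My plan is to verify the four properties one by one directly from the definition \eqref{def_analogj}, using (G1)--(G3) of the resolvent kernel and Theorem \ref{GreenAC}. A preliminary check is that the limit in \eqref{def_analogj} exists: by \eqref{Greenpole} and \eqref{resEis}, each of $G_s(P,Q)$, $\covol(\mathcal{O}_K)^{-1}E_\infty(P,s)$, $\covol(\mathcal{O}_K)^{-1}E_\infty(Q,s)$, and $\covol(\mathcal{O}_K)^{-1}\varphi_{\infty,\infty}(0;s)$ has a simple pole at $s=1$ with residue $1/\vol(X)$, and the signs in \eqref{def_analogj} make these four poles cancel.

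For $(\L 1)$, $\Gamma$-invariance of $\L(P,Q)$ in each variable follows from (G1), the automorphy of $E_\infty(\cdot,s)$, and the independence of $E_\infty(Q,s)$ from $P$ (and vice versa); $\varphi_{\infty,\infty}(0;s)$ is a constant. The symmetry $\L(P,Q)=\L(Q,P)$ is immediate from the symmetric structure of \eqref{def_analogj} together with $G_s(P,Q)=G_s(Q,P)$. For $(\L 2)$, the Eisenstein terms in \eqref{def_analogj} are smooth in $P$ near $Q$, so the singularity of $\L(P,Q)$ is inherited entirely from $G_s(P,Q)$. Property (G2) contributes $\nu(Q)/(2\pi d(P,Q))$, and a short Taylor expansion of \eqref{formula_cosh} in the form $\cosh d(P,Q)=1+\|P-Q\|^2/(2r(P)r(Q))$ yields $d(P,Q)\sim\|P-Q\|/r(Q)$ as $P\to Q$, hence $1/d(P,Q)=r(Q)/\|P-Q\|+O_Q(1)$; smoothness away from $Q$ is inherited from $G_s$ and the Eisenstein series.

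For $(\L 3)$, I plan to apply $\Delta_P$ under the limit (justified by uniform convergence on compact subsets of $\H^3$ avoiding the $\Gamma$-orbit of $Q$). Since $E_\infty(Q,s)$ and $\varphi_{\infty,\infty}(0;s)$ are independent of $P$, and (G3) gives $\Delta_P G_s(P,Q)=(1-s^2)G_s(P,Q)$ while $\Delta E_\infty(\cdot,s)=(1-s^2)E_\infty(\cdot,s)$, I obtain
\begin{equation*}
\Delta_P\L(P,Q)=\lim_{s\to 1}(1-s^2)\Bigl[G_s(P,Q)-\tfrac{1}{\covol(\mathcal{O}_K)}E_\infty(P,s)\Bigr].
\end{equation*}
The bracket has a removable singularity at $s=1$ by the matched residues above, while $(1-s^2)\to 0$, so the right-hand side vanishes.

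The substantive step is $(\L 4)$. Theorem \ref{GreenAC} directly yields
\begin{equation*}
\lim_{s\to 1}\Bigl(G_s(P,Q)-\tfrac{E_\infty(Q,s)}{\covol(\mathcal{O}_K)}\Bigr)=-\tfrac{1+\log r}{\vol(X)}+o(1)
\end{equation*}
as $r=r(P)\to\infty$, so it remains only to control $\covol(\mathcal{O}_K)^{-1}\lim_{s\to 1}(E_\infty(P,s)-\varphi_{\infty,\infty}(0;s))$ for large $r$. I will apply the Fourier expansion \eqref{equ_fourier_expansion_series_eis_gen} with $\kappa=\xi=\infty$, using $[\Gamma_\infty:\Gamma_\infty']=|\mathcal{O}_K^\times|/2$ and the exponential decay of the $K_s$-terms in $r$, to obtain $E_\infty(P,s)-\varphi_{\infty,\infty}(0;s)=(|\mathcal{O}_K^\times|/2)r^{1+s}+\varphi_{\infty,\infty}(0;s)(r^{1-s}-1)+o(1)$. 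Expanding $\varphi_{\infty,\infty}(0;s)=\covol(\mathcal{O}_K)/(\vol(X)(s-1))+O(1)$ from \eqref{resEis} and $r^{1-s}-1=-(s-1)\log r+O((s-1)^2)$, the product $\varphi_{\infty,\infty}(0;s)(r^{1-s}-1)$ has finite limit $-\covol(\mathcal{O}_K)(\log r)/\vol(X)$ as $s\to 1$; inserting this, dividing by $\covol(\mathcal{O}_K)$ and subtracting from the Green's contribution above, the two $\log r$ terms cancel exactly and the constant $-1/\vol(X)$ together with the quadratic term $-(|\mathcal{O}_K^\times|/(2\covol(\mathcal{O}_K)))r^2$ remain. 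The main obstacle is precisely this simultaneous double limit in $s\to 1$ and $r\to\infty$: the pole of $\varphi_{\infty,\infty}(0;s)$ at $s=1$ and the simple zero of $r^{1-s}-1$ at $s=1$ must be paired carefully, and it is this cancellation that kills the $\log r$-divergence coming from Theorem \ref{GreenAC} and produces the clean asymptotic in $(\L 4)$.
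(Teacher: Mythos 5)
Your proposal is correct and follows essentially the same route as the paper: $(\L1)$--$(\L3)$ are read off from (G1)--(G3) together with the local comparison $1/d(P,Q)=r(Q)/\|P-Q\|+O_Q(1)$, and $(\L4)$ is obtained by combining the asymptotic \eqref{Greenasymptotic} of Theorem \ref{GreenAC} with the Fourier expansion of $E_{\infty}(P,s)$ and the limit $\lim_{s\to 1}\varphi_{\infty,\infty}(0;s)(r^{1-s}-1)=-\covol(\mathcal{O}_K)\log(r)/\vol(X)$, so that the $\log r$ terms cancel. The only cosmetic difference is that you spell out the interchange of $\Delta_P$ with the limit in $(\L3)$, which the paper leaves implicit.
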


\begin{proof}
Properties $(\L1)$, $(\L2)$, and $(\L3)$ follow from properties $(G1)$, $(G2)$, and $(G3)$ of the Green's function $G_s(P,Q)$ together with the equality
$$\frac{1}{d(P,Q)}=\frac{r(Q)}{\|P-Q\|}+O_Q(1),$$
as $P\to Q$. 
In order to prove property $(\L4)$, we consider the Fourier expansion 
\eqref{equ_fourier_expansion_series_eis_gen}
of $E_{\infty}(P,s)$, namely the equality 
\begin{align*}
E_{\infty}(P,s)&=
\frac{|\mathcal{O}_K^{\times}|}{2}r^{1+s}
+
\varphi_{\infty,\infty}(0;s)\,r^{1-s}
+
\frac{2^{1+s}\pi^{s}}{\Gamma(s)}
\sum_{\substack{\mu \in \mathcal{D}^{-1}\\ \mu\neq 0}}|\mu|^{s}
\varphi_{\infty,\infty}(\mu;s)\,r K_{s}(4\pi |\mu|r)e^{2\pi i \tr(\mu z)},
\end{align*}
where we employed the identity $[\Gamma_{\infty}:\Gamma_{\infty}^{\prime}]=|\mathcal{O}_K^{\times}|/2$.
A straight-forward computation using \eqref{resEis} gives
\begin{align*}
\lim_{s\to 1}\varphi_{\infty,\infty}(0;s)(r^{1-s}-1)=-\frac{\covol(\mathcal{O}_K)}{\vol(X)}\log(r).
\end{align*}
Since $|\varphi_{\infty,\infty}(\mu;1)|$ is of at most polynomial growth in $|\mu|$ and $K_1(r)$ has exponential 
decay as $r\to \infty$, we therefore get
\begin{align*}
\lim_{s\to 1}\bigg(E_{\infty}(P,s)-\varphi_{\infty,\infty}(0;s)\bigg)
=\frac{|\mathcal{O}_K^{\times}|}{2}r^2-\frac{\covol(\mathcal{O}_K)}{\vol(X)}\log(r)+o(1),
\end{align*}
as $r\to \infty$. Property $(\L4)$ now follows from this together with \eqref{Greenasymptotic}.
This completes the proof of the Lemma.
\end{proof}

The function $\L(P,Q)$ is a building block for functions in $\mathcal A$. More precisely, we have the following proposition which can be seen as an analogue of \eqref{expression}.

\begin{proposition}\label{thm_block}
Let $F:\H^3\to \R\cup\{\infty\}$ be in $\mathcal{A}$, the class of functions
satisfying $(\mathcal{A}1)$--$(\mathcal{A}4)$.
Then, the limit $F(\infty):=\lim_{r\to \infty}F(P)$ exists and we have the equality
\begin{align*}
F(P)=F(\infty)+2\pi \sum_{\ell=1}^{m}
c_{\ell}\,\L(P,Q_{\ell}),
\end{align*}
for any $P\in X$ with $P\neq Q_{\ell}$, for $\ell=1,\ldots ,m$.
\end{proposition}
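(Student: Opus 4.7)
My plan is to prove that $H(P) := F(P) - 2\pi \sum_{\ell=1}^m c_\ell \L(P, Q_\ell)$ is a constant function on $X$; comparing asymptotics at the cusp $\infty$ will then identify the constant as $F(\infty)$. The argument splits into three stages: smoothness and harmonicity of $H$ on all of $X$, square-integrability, and an $L^2$ uniqueness argument.

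For the first stage I would compare the singular expansions of $F$ and $\L$ at each $Q_\ell$. Property $(\mathcal{A}2)$ gives $F(P) = c_\ell\,\nu(Q_\ell) r_\ell / \|P - Q_\ell\| + O(1)$, while $(\L2)$ gives $2\pi c_\ell \L(P, Q_\ell) = c_\ell\,\nu(Q_\ell) r_\ell / \|P - Q_\ell\| + O(1)$ as $P \to Q_\ell$, so the leading poles cancel exactly; the cross terms $2\pi c_k \L(P, Q_\ell)$ with $k \neq \ell$ are smooth near $Q_\ell$ by $(\L2)$. Hence $H$ is bounded in a punctured neighbourhood of each $Q_\ell$, and harmonic there by $(\mathcal{A}3)$ and $(\L3)$. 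The removable singularity theorem for the elliptic operator $\Delta$, applied in a local chart on the universal cover $\H^3$, then extends $H$ to a smooth function on all of $X$ with $\Delta H = 0$ everywhere.

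For the second stage, $F \in L^2(X)$ by $(\mathcal{A}4)$, so since $\vol(X) < \infty$ it suffices to show $\sum_\ell c_\ell \L(\cdot, Q_\ell)$ is bounded on $X$. At the cusp $\infty$, property $(\L4)$ combined with $\sum_\ell c_\ell = 0$ yields
\[
\sum_{\ell=1}^m c_\ell \L(P, Q_\ell) = -\frac{\sum_\ell c_\ell}{\vol(X)} - \frac{|\mathcal{O}_K^{\times}|\sum_\ell c_\ell}{2\covol(\mathcal{O}_K)}\, r^2 + o(1) = o(1)
\]
as $r \to \infty$. At each other cusp $\kappa$, an analogous expansion obtained from the Fourier expansion of $G_s(P, Q)$ at $\kappa$ in Proposition \ref{prop-fourier-expansion-greens} and the Kronecker limit formula at $\kappa$ shows that $\L(P, Q)$ is in fact already bounded as $r(\sigma_{\kappa}^{-1}P) \to \infty$: the potential $\log r$ growth in the constant Fourier coefficient of $G_s(P, Q)$ at $\kappa$ is cancelled exactly by the Eisenstein subtraction $E_\infty(P, s) - \varphi_{\infty,\infty}(0;s)$ built into the definition \eqref{def_analogj}, and the residual $r^{1\pm s}$ terms become regular at $s=1$. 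Together with step one, this shows $\sum_\ell c_\ell \L(\cdot, Q_\ell)$ is bounded on $X$, and hence $H \in L^2(X)$.

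For the third stage, the spectral theory of the hyperbolic Laplacian on $X = \Gamma \backslash \H^3$ (see \cite{EGM}) implies that any smooth $L^2$-harmonic function on $X$ is a constant, since $X$ is connected and $\Delta$ is non-negative with 1-dimensional zero-eigenspace spanned by constants. Hence $H \equiv c$ for some $c \in \R$. Letting $r(P) \to \infty$ in $F(P) = c + 2\pi \sum_\ell c_\ell \L(P, Q_\ell)$ and using the $o(1)$ decay from step two proves that $\lim_{r \to \infty} F(P)$ exists and equals $c$, which gives both the existence of $F(\infty)$ and the asserted identity. The main obstacle is step two at cusps different from $\infty$: property $(\L4)$ only handles $\infty$, so boundedness of $\L(P, Q)$ at each other cusp must be extracted by revisiting the regularising subtractions in \eqref{def_analogj} and checking that the simple pole at $s=1$ and the $\log r$ contribution indeed cancel. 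This is bookkeeping rather than a new idea, and it is made transparent by the explicit Fourier expansion of $G_s$ in Proposition \ref{prop-fourier-expansion-greens}.
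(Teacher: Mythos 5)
Your overall architecture — cancel the singularities, extend $H$ harmonically by the removable singularity theorem, show $H\in L^2(X)$, invoke the fact that smooth harmonic $L^2$ functions on $X$ are constant, and identify the constant via $(\L4)$ and $\sum_\ell c_\ell=0$ — is exactly the paper's, and stages one, three and four coincide with it. The genuine divergence is in the square-integrability step. The paper avoids all cusp analysis here: using $\sum_\ell c_\ell=0$ it rewrites $\sum_\ell c_\ell\,\L(P,Q_\ell)$ as $\sum_\ell c_\ell\lim_{s\to1}\bigl(G_s(P,Q_\ell)-\tfrac{2}{\vol(X)(s^2-1)}\bigr)$ plus a constant in $P$, and the regularized Green's function is already known to be square-integrable from its spectral expansion (the fact recorded before \eqref{G-orthogonal}). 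Your route instead aims at pointwise boundedness of $\sum_\ell c_\ell\,\L(\cdot,Q_\ell)$ near every cusp. The cancellation you describe at a cusp $\kappa\neq\infty$ is correct in principle (the pole and the $\log r(\sigma_\kappa^{-1}P)$ term of the zeroth Fourier coefficient of $G_s(P,Q)$ at $\kappa$ are matched by those of $\tfrac{1}{\covol(\mathcal{O}_K)}E_\infty(P,s)$, since both have residue $\covol(\mathcal{O}_K)/\vol(X)$), but justifying the termwise $s\to1$ limit there requires growth estimates for the Niebur--Poincar\'e coefficients $F_{\kappa,-\mu}(Q,s)$ at cusps $\kappa\neq\infty$, which Proposition \ref{ACNieburPointacare} establishes only for $\kappa=\infty$; so your ``bookkeeping'' in fact entails redoing Section \ref{section-analytic-continuation} at every cusp. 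The paper's reduction buys you all of that for free, and I would adopt it. One small imprecision: $\sum_\ell c_\ell\,\L(\cdot,Q_\ell)$ is \emph{not} bounded on $X$ — it blows up like $1/\|P-Q_\ell\|$ at each $Q_\ell$ — but this singularity is locally square-integrable in dimension $3$, so your conclusion $H\in L^2(X)$ survives with a one-line fix (or by noting that $H$ itself is already continuous near the $Q_\ell$ by stage one, so only the cusp neighbourhoods matter).
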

\begin{proof}
Let us define $\widetilde{F}(P)$, for $P\in X$ with $P\neq Q_{\ell}$, for $\ell=1,\ldots ,m$, by
$$\widetilde{F}(P)=F(P)-2\pi \sum_{\ell=1}^m c_{\ell}\,\L(P,Q_{\ell}).$$
By properties $(\mathcal{A}3)$ and $(\L3)$, we have that $\Delta \widetilde{F}(P)=0$ for $P\neq Q_{\ell}$, $\ell=1,\ldots ,m$. On the other hand, properties $(\mathcal{A}2)$ and $(\L2)$ imply that $\widetilde{F}(P)$ is locally bounded around any point in $X$. This implies that $\widetilde{F}(P)$ extends to a smooth function $\widetilde{F}:X\to \R$ satisfying $\Delta \widetilde{F}(P)=0$ everywhere. Indeed, by taking geodesic normal coordinates around any point, one can reduce the problem to the case where $\widetilde{F}(P)$ is a harmonic function with respect to the euclidean Laplacian, at least locally. The existence of the harmonic extension of $\widetilde{F}(P)$ then follows from Theorem 2.3 in \cite{ABW}.
Using $ \sum_{\ell=1}^m c_{\ell}=0$, we note that
\begin{align*}
\sum_{\ell=1}^{m}
c_{\ell}\,\L(P,Q_{\ell}) &= \sum_{\ell=1}^m c_{\ell}\lim_{s\to 1}\left(
G_s(P,Q_{\ell})-\frac{1}{\covol(\mathcal{O}_K)}
E_{\infty}(Q_{\ell},s) \right)\\
&=  \sum_{\ell=1}^m c_{\ell}\lim_{s\to 1}\left(G_s(P,Q_{\ell})-\frac{2}{\vol(X)(s^2-1)}\right)\\
&\mathrel{\phantom{=}}+\sum_{\ell=1}^m c_{\ell}\lim_{s\to 1}\left(\frac{2}{\vol(X)(s^2-1)}-\frac{1}{\covol(\mathcal{O}_K)} E_{\infty}(Q_{\ell},s)\right)\\
&= \sum_{\ell=1}^m c_{\ell}\lim_{s\to 1}\left(G_s(P,Q_{\ell})-\frac{2}{\vol(X)(s^2-1)}\right)
 +\frac{1}{\vol(X)}\sum_{\ell=1}^m c_{\ell}\log(\eta_{\infty}(Q_{\ell}) r_{\ell}).
\end{align*}
As mentioned in Section \ref{Green-subsection}, the function
$$P\mapsto \lim_{s\to 1}\left(G_s(P,Q_{\ell})-\frac{2}{\vol(X)(s^2-1)}\right)$$
is square-integrable on $X$, for fixed $Q_{\ell}$. This implies that the function
$$P\mapsto \sum_{\ell=1}^{m}c_{\ell}\,\L(P,Q_{\ell})$$
is also square-integrable. By property $(\mathcal{A}4)$, we conclude 
that $\widetilde{F}(P)$ is square-integrable over  $X$.  By Theorem 4.1.8 in \cite{EGM}, p.~140, we know that any smooth, harmonic, square-integrable function on $X$ is constant. We conclude that $\widetilde{F}(P)$ is constant. Finally, using $(\L4)$ together with $\sum_{\ell=1}^m c_{\ell}=0$, we have
\begin{align*}
\sum_{\ell=1}^mc_{\ell}\,\L(P,Q_{\ell})=o(1), 
\end{align*}
as $r\to \infty$.
We conclude that $\widetilde{F}(P)=F(\infty)$. This proves the result.
\end{proof}

We now prove our main theorem.
\begin{proof}[Proof of Theorem \ref{mainth}]
Let $F:\H^3\to \R\cup\{\infty\}$ be a function in the class $\mathcal{A}$ satisfying the properties 
$(\mathcal{A}1)$--$(\mathcal{A}4)$. By Proposition \ref{thm_block} we have
\begin{align*}
\frac{1}{\vol(X)}\int_XF(P)d\mu(P)=
F(\infty)+\frac{2\pi}{\vol(X)} \int_X\sum_{\ell=1}^{m}
c_{\ell}\,\L(P,Q_{\ell})d\mu(P).
\end{align*}
Since $\sum_{\ell}c_{\ell}=0$, we have
\begin{align*}
\int_X \sum_{\ell=1}^{m}
c_{\ell}\,\L(P,Q_{\ell})d\mu(P)&=\int_X \lim_{s\to 1}\sum_{\ell=1}^m c_{\ell}\bigg(G_s(P,Q_{\ell})-\frac{1}{\covol(\mathcal{O}_K)}E_{\infty}(Q_{\ell},s)\bigg)d\mu(P)\\
&=\int_X\sum_{\ell=1}^m c_{\ell}\lim_{s\to 1}\left(G_s(P,Q_{\ell})-\frac{2}{\vol(X)(s^2-1)}\right)d\mu(P)\\
& \mathrel{\phantom{=}} +\int_X\sum_{\ell=1}^m c_{\ell}\lim_{s\to 1}\left(\frac{2}{\vol(X)(s^2-1)}-\frac{1}{\covol(\mathcal{O}_K)} E_{\infty}(Q_{\ell},s)\right)d\mu(P).
\end{align*}
Using \eqref{G-orthogonal} and \eqref{KLF}, we obtain
\begin{align*}
\int_X \sum_{\ell=1}^{m}
c_{\ell}\,\L(P,Q_{\ell})d\mu(P)&=
 \lim_{s\to 1}\sum_{\ell=1}^m c_{\ell}\bigg(\frac{2}{s^2-1}-\frac{\vol(X)}{\covol(\mathcal{O}_K)}E_{\infty}(Q_{\ell},s)\bigg)\\
&= \sum_{\ell=1}^{m}c_{\ell} \log(\eta_{\infty}(Q_{\ell})\, r_{\ell}).
\end{align*}
This completes the proof of Theorem \ref{mainth}. 
\end{proof}

\section{Technical lemmas}\label{section-technical-lemma}

In this section we prove two lemmas that were used
in Section \ref{section-Fourier-expansions} for the computation 
of the Fourier coefficients of the Green's function and of the 
Niebur--Poincar\'e series. 

\begin{lemma}\label{Iintegral}
For $\mu,s\in \C$ with $\Re(s)>0$ and $r>\tilde{r}>0$, let
\begin{align*}
I_{\mu,s}(r,\tilde{r}):=\int\limits_{\C}\varphi_s\left(\frac{|z|^2+r^2+\tilde{r}^2}{2r\tilde{r}}\right)e^{-2\pi i \tr (\mu z )}dz
\end{align*}
with $\varphi_s(t)=(t+\sqrt{t^2-1})^{-s}(t^2-1)^{-1/2}$. Then, we have
\begin{align*}
I_{\mu,s}(r,\tilde{r})=
\begin{cases}
2\pi s^{-1}r^{1-s}\tilde{r}^{s+1}, & \text{ if }\mu= 0,\\
4\pi r\tilde{r} K_{s}(4\pi |\nu|r) I_{s}(4\pi |\nu|\tilde{r}),  & \text{ if }\mu \neq 0.
\end{cases}
\end{align*}
\end{lemma}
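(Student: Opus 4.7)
The two cases are handled separately, both beginning with polar coordinates $z = \rho e^{i\theta}$.

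For $\mu = 0$ the $\theta$-integral is trivial, and the substitution $t = (\rho^2+r^2+\tilde r^2)/(2r\tilde r)$ yields $\rho\,d\rho = r\tilde r\,dt$, reducing the integral to $2\pi r\tilde r \int_{t_0}^\infty \varphi_s(t)\,dt$ with $t_0 = (r^2+\tilde r^2)/(2r\tilde r)$. A second substitution $u = t+\sqrt{t^2-1}$, under which $dt/\sqrt{t^2-1} = du/u$ and $\varphi_s(t)\,dt = u^{-s-1}\,du$, collapses the remaining integral to an elementary one. The lower limit becomes $u_0 = r/\tilde r$ (using $r > \tilde r$), and $\int_{u_0}^\infty u^{-s-1}\,du = (r/\tilde r)^{-s}/s$, giving the stated formula $2\pi s^{-1} r^{1-s}\tilde r^{s+1}$.

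For $\mu \neq 0$, integrating out $\theta$ produces a $J_0$-factor, giving
\begin{align*}
I_{\mu,s}(r,\tilde r) = 2\pi\int_0^\infty \varphi_s\!\left(\frac{\rho^2+r^2+\tilde r^2}{2r\tilde r}\right) J_0(4\pi|\mu|\rho)\,\rho\,d\rho.
\end{align*}
Rather than attempt to evaluate this radial integral directly (which leads to awkward hyperbolic substitutions), I will exploit that $\varphi_s(\cosh d(P,Q))$ is an eigenfunction of $\Delta$ with eigenvalue $1-s^2$ away from the diagonal. Viewing $I_{\mu,s}(r,\tilde r)$ as the $(x,y)$-Fourier coefficient of $P\mapsto \varphi_s(\cosh d(P,\tilde r j))$, the eigenfunction equation translates (after differentiation under the integral) into the modified Bessel ODE
\begin{align*}
r^2 f''(r) - r f'(r) - \bigl(16\pi^2|\mu|^2 r^2 + s^2-1\bigr) f(r) = 0,
\end{align*}
valid for $r \neq \tilde r$. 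Its two linearly independent solutions are $rK_s(4\pi|\mu|r)$ and $rI_s(4\pi|\mu|r)$. Since $\varphi_s(\cosh d(P,Q))$ decays like a negative power of $r$ at infinity, the $I_s$-branch is excluded on $r > \tilde r$. The symmetry $r\leftrightarrow \tilde r$ of the integrand then forces
$$
I_{\mu,s}(r,\tilde r) = C(s,|\mu|)\,r\tilde r\,K_s(4\pi|\mu|r) I_s(4\pi|\mu|\tilde r)
$$
for some scalar $C(s,|\mu|)$.

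To pin down $C = 4\pi$, I will let $|\mu|\to 0^+$ and compare with the already-established $\mu = 0$ formula. Using $I_s(x) \sim (x/2)^s/\Gamma(s+1)$ and $K_s(x) \sim 2^{s-1}\Gamma(s)/x^s$, one checks that the proposed right-hand side tends to $2\pi r^{1-s}\tilde r^{s+1}/s$, confirming $C = 4\pi$. The main obstacles will be the rigorous bookkeeping for the $\mu \neq 0$ case: justifying differentiation under the integral to derive the ODE, verifying the (at most) polynomial growth in $r$ needed to discard the $I_s$-branch, and taking the $|\mu|\to 0$ limit. All of these rest on dominated-convergence estimates flowing from $\varphi_s(t)\sim 2t^{-s-1}$ at infinity and standard uniform bounds on $J_0$; the arithmetic is straightforward but the uniformity requires some care.
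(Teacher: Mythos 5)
Your $\mu=0$ computation is the same as the paper's. For $\mu\neq 0$ you take a genuinely different route: the paper evaluates the radial integral head-on through a chain of classical integral-transform identities ($\varphi_s(b/a)=a\int_0^\infty I_s(au)e^{-bu}\,du$, then a Laplace transform of $J_0$, then the product formula $\int_0^\infty I_s(ab/t)e^{-t/2-(a^2+b^2)/2t}\,\tfrac{dt}{t}=2K_s(a)I_s(b)$), whereas you characterize the Fourier coefficient as the solution of the modified Bessel ODE singled out by its growth, which is conceptually cleaner and explains \emph{why} the answer is $K_s\cdot I_s$. The ODE derivation, the exclusion of the $I_s$-branch in the large variable, and the $|\mu|\to 0$ asymptotics of $K_s$ and $I_s$ are all correct as stated.

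There are, however, two gaps in the second half. The essential one is the normalization: you write $I_{\mu,s}(r,\tilde r)=C(s,|\mu|)\,r\tilde r\,K_s(4\pi|\mu|r)I_s(4\pi|\mu|\tilde r)$ and then determine $C$ by letting $|\mu|\to 0^+$. Since $C$ a priori depends on $|\mu|$, this comparison only shows $\lim_{|\mu|\to 0}C(s,|\mu|)=4\pi$; it says nothing about $C(s,|\mu|)$ for the fixed nonzero $\mu$ you care about. You need an extra input, and the cheapest one is the scaling identity obtained from $z\mapsto\lambda z$, namely $I_{\mu,s}(r,\tilde r)=\lambda^2 I_{\lambda\mu,s}(r/\lambda,\tilde r/\lambda)$ for $\lambda>0$, which (together with rotation invariance in $\mu$) forces $C(s,|\mu|)=C(s,\lambda|\mu|)$ for all $\lambda$, i.e.\ $C$ is independent of $|\mu|$; only then does the $|\mu|\to 0$ comparison pin it down. (Alternatively one can compute the $r\to\infty$ asymptotics of the integral directly for fixed $\mu$, but that is another genuine computation.) The second, smaller, gap is that ``symmetry forces the product form'' is not quite enough: symmetry plus the ODE gives $I_{\mu,s}(r,\tilde r)=\bigl(a\,\tilde rK_s(4\pi|\mu|\tilde r)+b\,\tilde rI_s(4\pi|\mu|\tilde r)\bigr)\cdot rK_s(4\pi|\mu|r)/(rK_s(4\pi|\mu|r))\cdot(\text{const})$ in the $\tilde r$ variable on $(0,r)$, and to kill the $K_s$-branch there you must also check the behavior as $\tilde r\to 0$: the integrand estimate $\varphi_s(t)\sim 2^{-s}t^{-s-1}$ gives $I_{\mu,s}(r,\tilde r)=O(\tilde r^{1+\Re(s)})$, which is incompatible with a nonzero $\tilde r^{1-s}$ term when $\Re(s)>0$. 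Both repairs are short, but as written the argument does not yet establish the constant $4\pi$ for a fixed $\mu\neq 0$.
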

\begin{proof}
Using polar coordinates $z=\rho e^{i\theta}$, we get 
\begin{align*}
I_{\mu,s}(r,\tilde{r})
&=\int_{0}^{2\pi}\int_0^{\infty}
\varphi_s\left(\frac{\rho^2+r^2+\tilde{r}^2}{2r\tilde{r}}\right)
e^{-2\pi i \rho \tr( \mu e^{i\theta} )}\rho\, d\rho d\theta.
\end{align*}
Letting $t=\rho^2$ and $f(t):=(t+r^2+\tilde{r}^2)/2r\tilde{r}$, we have
\begin{align*}
I_{0,s}(r,\tilde{r})
=\pi \int_0^{\infty}\varphi_s\left(f(t)\right)dt
 =-\frac{  2\pi r\tilde{r}}{s} \left[  \left (f(t)+\sqrt{f(t)^2-1}\right)^{-s}   
\right]_{t=0}^{t=\infty}
=\frac{ 2 \pi  }{s}r^{1-s}\tilde{r}^{s+1},
\end{align*}
where we have used that $r>\tilde{r}$. This proves the first formula. For $\mu\not=0$, we write $\mu=|\mu|e^{i\alpha}$  and get
\begin{align*}
\int_{0}^{2\pi}e^{-2\pi i \rho \tr( \mu e^{i\theta})} d\theta
=\int_{0}^{2\pi}e^{-4\pi i \rho |\mu|\sin(\theta) }  d\theta
= 2\pi J_0(4\pi |\mu|\rho ),
\end{align*}
by using  formula \eqref{A1}. Replacing this in the above formula for $I_{\mu,s}(r,\tilde{r})$ and 
making the change of variables $t=\rho^2/r^2$,
we get
\begin{align*}
I_{\mu,s}(r,\tilde{r})
&=\pi r^2\int_0^{\infty}
\varphi_s\left(\frac{r}{2\tilde{r}} \left(t+1+\frac{\tilde{r}^2}{r^2}\right)\right)
J_0(4\pi |\mu| r \sqrt{t})dt.
\end{align*}
Using formula \eqref{A2}, we have
\begin{align*}
\varphi_s(b/a)=a\,
\int_0^{\infty}I_s(a u)e^{-b u}du,
\end{align*}
for $b>a>0$.
Using this identity with $a=2\tilde{r}/r$ and $b=t+1+\frac{\tilde{r}^2}{r^2}$, 
we get
\begin{align*}
I_{\mu,s}(r,\tilde{r})
&=2\pi r\tilde{r} \int_0^{\infty}I_s\left(\frac{2\tilde{r}u}{r}\right)e^{-\left(1+\frac{\tilde{r}^2}{r^2}\right) u}
\int_0^{\infty}J_0(4\pi |\mu| r \sqrt{t})e^{-t u}dtdu.
\end{align*}
Formula \eqref{A3} with $a=u$ and $b=4\pi|\mu|r$ yields
\begin{align*}
I_{\mu,s}(r,\tilde{r})
&=2\pi r\tilde{r} \int_0^{\infty}I_s\left(\frac{2\tilde{r}u}{r}\right)e^{-\left(1+\frac{\tilde{r}^2}{r^2}\right) u}
e^{-4\pi^2 |\mu|^2 r^2/u}
\frac{du}{u}.
\end{align*}
Next, we make the change of variables $t=8\pi^2 |\mu|^2 r^2/u$ 
and we get
\begin{align*}
I_{\mu,s}(r,\tilde{r})
&=2\pi r\tilde{r} \int_0^{\infty}
I_s\left(\frac{a b}{t}\right) e^{-\frac{a^2+b^2}{2 t} }e^{-t/2}
\frac{dt}{t},
\end{align*}
with $a=4\pi  |\mu| r$ and $b=4\pi  |\mu| \tilde{r}$. 
Observing that $a>b>0$ and using \eqref{A4} we conclude
\begin{align*}
I_{\mu,s}(r,\tilde{r})=4\pi r\tilde{r} K_{s}(a) I_{s}(b).
\end{align*}
This completes the proof of the Lemma.
\end{proof}

\begin{lemma}\label{Gintegral}
For $\nu,\mu,c\in \C$ with $\nu,c$ both non zero, $r>0$, and $s\in \C$ with $\Re(s)>0$, put
\begin{align*}
\I(r,\nu,\mu,c)=\int_{\C}\frac{r}{|c|^2(|z|^2+r^2)}I_s\left(\frac{4\pi |\nu|r}{|c|^2(|z|^2+r^2)}\right)e^{-2\pi i \tr\left(\nu\frac{\overline{z}}{c^2(|z|^2+r^2)}+\mu z\right)}dz.
\end{align*}
Then, we have
\begin{align*}
\I(r,\nu,\mu,c)=
\begin{cases}
\dfrac{\pi^{1+s}2^s|\nu|^s}{|c|^{2(1+s)}s\Gamma(1+s)}\,r^{1-s}, & \text{ if }\mu =0,\\[0.5cm]
\dfrac{2\pi}{|c|^2}\,rK_s(4\pi |\mu |r)\,\J_s\left(\frac{\nu \mu}{c^2}\right), & \text{ if }\mu \neq 0,
\end{cases}
\end{align*}
where $\J_s(z)$ is given in \eqref{def_Lniebur}. 
\end{lemma}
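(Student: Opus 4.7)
The proof parallels that of Lemma \ref{Iintegral}, but the additional oscillatory factor $e^{-2\pi i\tr(\mu z)}$ makes the angular integration more delicate. The plan is to introduce polar coordinates, reduce the angular integral to a single Bessel function via a trigonometric identity, and then treat the cases $\mu = 0$ and $\mu \neq 0$ separately.

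In polar coordinates $z = \rho e^{i\theta}$, writing $\nu = |\nu|e^{i\alpha}$, $\mu = |\mu|e^{i\beta}$, and $c^2 = |c|^2 e^{i\gamma}$, the exponent becomes $-iA\cos(\alpha-\gamma-\theta) - iB\cos(\beta+\theta)$, with $A = A(\rho) = 4\pi|\nu|\rho/(|c|^2(\rho^2+r^2))$ and $B = B(\rho) = 4\pi|\mu|\rho$. Substituting $\phi = \alpha-\gamma-\theta$ and combining the two cosines into a single expression of the form $R\cos(\phi-\psi)$, with $R^2 = A^2 + 2AB\cos\delta + B^2$ and $\delta := \arg(\nu\mu/c^2)$, formula \eqref{A1} gives $2\pi J_0(R(\rho))$ for the angular integral. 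A direct computation exhibits the factorization $R^2 = 16\pi^2\rho^2\,\bigl|\nu/(c^2(\rho^2+r^2))+\bar\mu\bigr|^2$, which exposes the complex number $\nu\mu/c^2$ as the natural parameter of the problem. Changing variable to $t = \rho^2$ reduces matters to a one-dimensional integral.

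For $\mu = 0$ one has $B \equiv 0$ and $R = A$; substituting $u = 4\pi|\nu|r/(|c|^2(t+r^2))$ recasts the integral in a form directly evaluable by \eqref{A3}, giving the stated closed form. For $\mu \neq 0$, the plan is to expand $J_0(R)$ via the Bessel addition formula $J_0(R) = \sum_{m\in\Z}(-1)^m J_m(A)J_m(B)e^{-im\delta}$, which reflects the factorization of $R^2$ above; interchange summation and integration (justified by absolute convergence for $\Re(s) > 0$); and reduce each $m$-th integral, after a further change of variables, to an instance of the iterated Bessel integral \eqref{A4}. Resumming the series and recognizing the result through the power series of $I_s$ and $J_s$ should yield $rK_s(4\pi|\mu|r)\cdot\J_s(\nu\mu/c^2)$; the case distinction in the definition of $\J_s$ according to the sign of $\Re(\nu\mu/c^2)$ corresponds naturally to whether the resulting Bessel factors are of $I_s$- or $J_s$-type. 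The main obstacle is the bookkeeping in this resummation: correctly identifying the coefficients of the double series with those of $\J_s(\nu\mu/c^2)$ while tracking the various powers of $|\nu|$, $|\mu|$, and $|c|$. Once this is handled, the formula follows.
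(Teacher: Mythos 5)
Your setup is sound: the polar-coordinate reduction, the identification of the angular integral as $2\pi J_0(R(\rho))$, and the factorization $R^2=16\pi^2\rho^2\bigl|\nu/(c^2(\rho^2+r^2))+\overline{\mu}\bigr|^2$ all match what actually happens (this is the same $J_0\bigl(\tfrac{4\pi\rho}{\rho^2+r^2}|\beta+\rho^2+r^2|\bigr)$ that the paper obtains after rescaling, with $\beta=\nu\mu/c^2$). But both evaluations that follow are misidentified. For $\mu=0$, your substitution $u=4\pi|\nu|r/(|c|^2(t+r^2))$ turns the integral into $\tfrac{\pi r}{|c|^2}\int_0^a I_s(u)\,J_0\bigl(\sqrt{u(a-u)}\bigr)\tfrac{du}{u}$ with $a=4\pi|\nu|/(|c|^2r)$: a finite-range Sonine-type integral, not a Laplace transform, so \eqref{A3} does not apply (there is no exponential kernel anywhere, and no analogue of \eqref{A2} is available here because both Bessel factors have arguments reciprocal in $t$). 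Evaluating this requires a genuine identity — the paper proves it separately (Lemma \ref{IsJ0Integral}) by expanding both Bessel functions in power series, integrating term by term with \eqref{A7}, and collapsing the double sum with the binomial identity \eqref{A8}. That step is the content of the $\mu=0$ case and is missing from your argument.

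For $\mu\neq 0$, expanding $J_0(R)$ by Graf's addition theorem is a legitimate alternative to the paper's route (which instead applies the multiplication theorem \eqref{JmultiplicationTh} to $J_0$ of the rescaled argument), but the claim that each $m$-th term is ``an instance of \eqref{A4}'' cannot be right: \eqref{A4} again has an exponential kernel $e^{-t/2-(a^2+b^2)/2t}$, whereas your $m$-th integrand is $\tfrac{\rho}{\rho^2+r^2}\,I_s\bigl(\tfrac{c_1}{\rho^2+r^2}\bigr)J_m\bigl(\tfrac{c_2\rho}{\rho^2+r^2}\bigr)J_m(c_3\rho)$, a Hankel-transform of a product of Bessel functions with reciprocal arguments. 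Handling it forces exactly the machinery you defer as ``bookkeeping'': power-series expansion of the reciprocal-argument factors, term-by-term integration via \eqref{A5} (producing the $K$-Bessel functions), a recursion to collapse the resulting sum of $K_{s+j+2\ell}$'s to a single $K_s$ (Lemma \ref{Krecursion}), and a product formula identifying the remaining double series with $J_s(4\pi\sqrt{\beta})J_s(4\pi\sqrt{\overline{\beta}})$ (Lemma \ref{Jproductlemma}). None of this is routine, and it is where the case distinction in $\J_s$ actually emerges. As it stands the proposal is a plan with the two decisive evaluations either wrongly attributed to \eqref{A3} and \eqref{A4} or left unexecuted, so it does not yet constitute a proof.
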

\begin{proof}
We start with the case $\mu=0$. Using polar coordinates $z=\rho e^{i\theta}$, we get
\begin{align*}
\I(r,\nu,0,c)=\frac{r}{|c|^2}\int_0^{\infty}\int_0^{2\pi}\frac{\rho}{\rho^2+r^2}I_s\left(\frac{4\pi |\nu| r}{|c|^2(\rho^2+r^2)}\right)e^{-\frac{4\pi i |\nu| \rho}{|c|^2(\rho^2+r^2)}\cos(\theta)}d\theta d\rho.
\end{align*}
Using formula \eqref{A1} and making the change of variables $\xi=\rho/r$, 
we get
$$\I(r,\nu,0,c)=\frac{2\pi r}{|c|^2}\int_0^{\infty}\frac{\xi}{\xi^2+1}I_s\left(\frac{4\pi |\nu| }{|c|^2r(\xi^2+1)}\right)J_0\left(\frac{4\pi  |\nu| \xi}{|c|^2r(\xi^2+1)}\right)d\xi. $$
Using Lemma \ref{IsJ0Integral} with $a=\frac{4\pi |\nu| }{|c|^2r}$, we get
$$\I(r,\nu,0,c)=\frac{\pi^{1+s}2^s|\nu|^s}{|c|^{2(1+s)}s\Gamma(1+s)}\,r^{1-s}.$$
This proves the first formula. For $\mu\neq 0$, we start by writing 
$$|\nu|\I\left(\frac{r}{|\mu|},\nu,\mu,c\right)=\int_{\C}\frac{|\beta|r}{|\mu z|^2+r^2}I_s\left(\frac{4\pi |\beta|r}{|\mu z|^2+r^2}\right)e^{-2\pi i \tr\left(\beta \frac{\overline{z\mu }}{(|z\mu |^2+r^2)}\right)-2\pi i \tr(\mu z)}dz$$
with $\beta=\frac{\nu \mu}{c^2}$.
Making the change of variables $\xi =\mu z$ and using polar coordinates $\xi=\rho e^{i\theta}$,
we obtain
$$|\nu|\I\left(\frac{r}{|\mu|},\nu,\mu,c\right)=\frac{|\beta| r}{|\mu|^2}\int_{0}^{\infty}\frac{\rho}{\rho^2+r^2}
I_s\left(\frac{4\pi |\beta|r}{\rho^2+r^2}\right)\int_0^{2\pi}e^{-2\pi i \left( \frac{\beta \rho e^{-i\theta}+\overline{\beta}\rho e^{i\theta} }{\rho^2+r^2}+2\rho \cos(\theta)\right)}d\theta d\rho.$$
Now, we compute
\begin{align*}
\int_0^{2\pi}e^{-2\pi i \left( \frac{\beta \rho e^{-i\theta}+\overline{\beta}\rho e^{i\theta} }{\rho^2+r^2}+2\rho \cos(\theta)\right)}d\theta
&= \int_0^{2\pi}e^{-4\pi i \frac{\rho}{\rho^2+r^2}|\beta+\rho^2+r^2|\sin(\theta)}d\theta \\
&= 2\pi J_0\left(\frac{4\pi \rho}{\rho^2+r^2}|\beta+\rho^2+r^2|\right),
\end{align*}
by formula \eqref{A1}. We conclude
\begin{align*}
|\nu|\I\left(\frac{r}{|\mu|},\nu,\mu,c\right)=\frac{2\pi |\beta| r}{|\mu|^2}\int_{0}^{\infty}\frac{\rho}{\rho^2+r^2}
I_s\left(\frac{4\pi |\beta|r}{\rho^2+r^2}\right)
J_0\left(\frac{4\pi \rho}{\rho^2+r^2}|\beta+\rho^2+r^2|\right)d\rho.
\end{align*}
Making the change of variables $t=\rho/r$, we get
\begin{align*}
|\nu|\I\left(\frac{r}{|\mu|},\nu,\mu,c\right)
&=\frac{2\pi |\beta| r}{|\mu|^2}\int_{0}^{\infty}\frac{t}{t^2+1}I_s\left(\frac{4\pi |\beta|}{r(t^2+1)}\right)
J_0\left(\frac{4\pi r t}{t^2+1}\left|\frac{\beta}{r^2}+t^2+1\right|\right)dt.
\end{align*}
Now, by formula \eqref{JmultiplicationTh} with $\lambda=\bigl|\frac{\beta}{r^2(t^2+1)}+1\bigr|$, 
$z=4\pi rt$, and $s=0$,
we get
\begin{align*}
J_0\left(\frac{4\pi rt}{t^2+1}\left|\frac{\beta}{r^2}+t^2+1\right|\right)
= \sum_{k=0}^{\infty}\frac{(-1)^k}{k!}\frac{1}{(t^2+1)^{2k}}\left(\frac{|\beta|^2}{r^4}
+\frac{\tr(\beta)}{r^2} (t^2+1)\right)^k (2\pi r t)^k J_{k}(4\pi r t).
\end{align*}
Therefore, using the binomial theorem, we have
\begin{align*}
&|\nu|\I\left(\frac{r}{|\mu |},\nu,\mu,c\right)=\\
&\frac{2\pi |\beta| r}{|\mu|^2}\sum_{k=0}^{\infty}\frac{(-1)^k(2\pi)^k}{k!}
\sum\limits_{j=0}^{k} {k\choose j }
\frac{|\beta|^{2j} \tr(\beta)^{k-j}}{r^{k+2j}} 
\int_{0}^{\infty}\frac{t^{k+1} }{(t^2+1)^{k+j+1}}
I_s\left(\frac{4\pi |\beta|}{r(t^2+1)}\right)
J_{k}(4\pi r t)
dt.
\end{align*}
Using the power expansion for $I_s(z)$ given in formula \eqref{Iseries}, we have
\begin{align*}
&\int_{0}^{\infty}\frac{t^{k+1} }{(t^2+1)^{k+j+1}}
I_s\left(\frac{4\pi |\beta|}{r(t^2+1)}\right)
J_{k}(4\pi r t)
dt
\\
&=\left(\frac{2\pi |\beta|}{r}\right)^s
\sum_{\ell=0}^{\infty}\frac{(2\pi|\beta|)^{2\ell}r^{-2\ell}}{\ell!\,\Gamma(s+\ell+1)}\int_{0}^{\infty}\frac{t^{k+1} }{(t^2+1)^{s+k+j+2\ell+1}}
J_{k}(4\pi r t)dt
\\
&=\left(\frac{2\pi |\beta|}{r}\right)^s
\sum_{\ell=0}^{\infty}\frac{(2\pi|\beta|)^{2\ell}r^{-2\ell}}{\ell!\,\Gamma(s+\ell+1)}
\frac{(2\pi r)^{s+k+j+2\ell}K_{s+j+2\ell}(4\pi r)}{\Gamma(s+k+j+2\ell+1)},
\end{align*}
by formula \eqref{A5} with $s=k$, $\mu=s+k+j+2\ell$, and $a=4\pi r$. This gives
\begin{align*}
\frac{|\nu| |\mu|^2}{2\pi |\beta| r}\I\left(\frac{r}{|\mu|},\nu,\mu,c\right)
&=\sum_{k=0}^{\infty}
\sum\limits_{j=0}^{k}\sum_{\ell=0}^{\infty}
\frac{(-1)^k(2\pi)^{2s+2k+j+4\ell}|\beta|^{s+2j+2\ell} \tr(\beta)^{k-j} K_{s+j+2\ell}(4\pi r)}
{(k-j)!j!\ell!\,\Gamma(s+\ell+1)\Gamma(s+k+j+2\ell+1)\,r^{j}}\\
&=\sum_{k=0}^{\infty}
\sum\limits_{j=0}^{\infty}\sum_{\ell=0}^{\infty}
\frac{(-1)^{k+j}(2\pi)^{2s+2k+3j+4\ell}|\beta|^{s+2j+2\ell} \tr(\beta)^{k} K_{s+j+2\ell}(4\pi r)}{k!j!\ell!\,\Gamma(s+\ell+1)\Gamma(s+k+2j+2\ell+1)\,r^{j}},
\end{align*}
using a well-known identity for the double sum over $k$ and $j$. 
Applying this identity again for the double sum over $j$ and $\ell$
and using Lemma \ref{Krecursion} with $z=4\pi r$, we get
\begin{align*}
\frac{|\nu| |\mu|^2}{2\pi |\beta| r}\I\left(\frac{r}{|\mu|},\nu,\mu,c\right)
&=\sum_{k=0}^{\infty}
\sum\limits_{j=0}^{\infty}
\frac{(-1)^{k+j}(2\pi)^{2s+2k+3j}|\beta|^{s+2j} \tr(\beta)^{k}}
{k!j!\,\Gamma(s+k+2j+1)r^{j}}
\sum_{\ell=0}^{j}
{j\choose \ell }
\frac{(-2\pi r)^{\ell} K_{s+j+\ell}(4\pi r)}
{\Gamma(s+\ell+1)}\\
&= K_s(4\pi r)\sum_{j=0}^{\infty}
\frac{(2\pi)^{2s+4j}|\beta|^{s+2j} }
{j!\,\Gamma(s+j+1)}
\sum_{k=0}^{\infty}
\frac{(-1)^{k}(2\pi)^{2k}\tr(\beta)^{k}}{k!\,\Gamma(s+k+2j+1)}.
\end{align*}
Assuming that $\tr(\beta)>0$, using formula \eqref{Jseries} and 
recalling that $\beta=\frac{\nu \mu}{c^2}$,  we therefore obtain 
\begin{align*}
\I\left(r,\nu,\mu,c\right)
&=
\frac{2\pi }{|c|^2}r K_s(4\pi |\mu|r)\sum_{j=0}^{\infty}
\frac{\bigl((2\pi)|\beta|/\sqrt{\tr(\beta)}\bigr)^{s+2j} }
{j!\,\Gamma(s+j+1)}
J_{s+2j}\left(4\pi \sqrt{\tr(\beta)}\right).
\end{align*}
Applying Lemma \eqref{Jproductlemma} with $x=4\pi \sqrt{\tr(\beta)}$ and $A=\sqrt{\beta/\tr(\beta)}$,
we get
\begin{align*}
\I\left(r,\nu,\mu,c\right)
&=
\frac{2\pi }{|c|^2}
r K_s(4\pi |\mu|r)J_s\left(4\pi\sqrt{\beta}\right)J_s\left(4\pi\sqrt{\overline{\beta}}\right)\\
 &=\frac{2\pi }{|c|^2}r
 K_s(4\pi |\mu|r)\,\J_s\left(\frac{\nu \mu}{c^2}\right).
\end{align*}
This implies the second formula in the case $\mu\neq 0, \Re\left(\frac{\nu \mu}{c^2}\right)>0$.
The case $\Re\left(\frac{\nu \mu}{c^2}\right)<0$ is completely analogous, so we omit the details. Finally, the case $\Re\left(\frac{\nu \mu}{c^2}\right)=0$ follows from any of the two other cases by taking the limit $\beta\to it, t\in \R,t\neq 0$. This completes the proof of the Lemma.
\end{proof}

\appendix

\appendixpageoff

\section{Identities involving special functions}
In this appendix we recall some identities involving special functions that are used in the paper. Most of these identities are well-known and can be found in the literature and for these we just give a reference.
For some of the less-known identities we give sketch of proofs.

We start with the well-known identities. These are
\begin{align}
\int_{0}^{2\pi}e^{-ia\sin(\theta)}d\theta&= 2\pi J_0(a),\quad a\geq 0, \label{A1}\\
\int_0^{\infty}e^{-a t}J_0(b\sqrt{t})dt&=\frac{1}{a}e^{-b^2/4a}, \quad a>0, b\in \mathbb{R}, \label{A3}\\
\int_0^{\infty}I_s\left(\frac{ab}{t}\right)e^{-\frac{t}{2}-\frac{1}{2t}(a^2+b^2)}\frac{dt}{t}
&=2K_s(a)I_s(b), \quad a>b>0, \Re(s)>-1,\label{A4} \\
\int_0^{\infty}\frac{x^{\mu-1}}{(x+1)^{\nu}}dx&=\frac{\Gamma(\mu)\Gamma(\nu-\mu)}{\Gamma(\nu)}, \quad\Re(\nu)>\Re(\mu)>0, \label{A7}
\end{align}
and
\begin{align}\label{A5}
\int_0^{\infty}\frac{t^{s+1}}{(t^2+1)^{\mu+1}}J_{s}(at)dt =\frac{(a/2)^{\mu}K_{\mu-s}(a)}{\Gamma(\mu+1)}, 
\quad a>0, -1<\Re(s)<\Re\left(2\mu+\tfrac{3}{2}\right), 
\end{align}
which can be found in \cite{GRA} (formulas 8.411-1, 6.614-1, 6.653-2, 6.565-4, and 3.194-3, respectively). 
We also have 
\begin{align}
I_s(z)&=\sum_{k=0}^{\infty}\frac{(z/2)^{s+2k}}{k!\,\Gamma(s+k+1)}, 
\quad z\in \C\setminus (-\infty,0], \label{Iseries} \\
J_s(z)&=\sum_{k=0}^{\infty}\frac{(-1)^k(z/2)^{s+2k}}{k!\,\Gamma(s+k+1)}, 
\quad z\in \C\setminus (-\infty,0], \label{Jseries}
\end{align}
and
\begin{align}
 \sum_{\ell=0}^n{n \choose \ell}(-1)^{\ell}\frac{\Gamma(\ell+b)}{ \Gamma(\ell+a)}&=
 \frac{\Gamma(n+a-b)\Gamma(b)}{\Gamma(a-b)\Gamma(n+a)},\label{A8}
\end{align}
in \begin{it}loc.~cit\end{it}.~(formulas 8.402, 8.445, and 0.160-2, respectively). For the Gauss hypergeometric series $_2F_1(a,b;c;z)$, we have the transformation property (formula 9.134-2 in \begin{it}loc.~cit\end{it}.)
\begin{align}\label{A9}
_2F_1(a,b;a-b+1;z)=(1+z)^{-a}\ _2F_1\left( \frac{a}{2},\frac{1+a}{2};a-b+1;\frac{4z}{(1+z)^2} \right).
\end{align}
Formulas 4.16-1 in \cite{MAG} and 5$\cdot$22-16 in \cite{WAT} are
\begin{align}
\int_0^{\infty}I_s(a t)e^{-bt}dt
&=\frac{a^s}{\sqrt{b^2-a^2}}(b+\sqrt{b^2-a^2})^{-s} , 
\quad \Re(s)>-1,\Re(b)>|\Re(a )|,\label{A2} \\
J_{s}(\lambda z)
&=\lambda^s\sum_{k=0}^{\infty}\frac{(-z/2)^k(\lambda^2-1)^k}{k!}J_{s+k}(z). \label{JmultiplicationTh}
\end{align}
We also have the well-known asymptotic bounds, valid uniformly for $s$ in a compact set contained in the half-plane $\Re(s)>-1/2$,
\begin{align}
|K_s(x)|&= O\left(x^{-1/2}e^{-x}\right), 
\quad\text{ for }x\to \infty,x\in \R, \label{Kasymptotic}
\end{align}
and
\begin{align}
I_s(z)&=  O\left(|z|^{\Re(s)} e^{\Re(z)}\right), \label{Iinfinity}\\
J_s(z)&=  O\left(|z|^{\Re(s)} e^{|\Im(z)|}\right), \label{Jinfinity}
\end{align}
for $z\to \infty, |\mathrm{arg}(z)|\leq \frac{1}{2}\pi - \delta$ with fixed $\delta>0$. 
The asymptotic formulas
\begin{align}\label{IJasympt}
I_s(z)\sim J_s(z)\sim \frac{(z/2)^s}{\Gamma(s+1)}, \quad\text{ for }z\to 0,
\end{align}
also hold uniformly for $s$ in a fixed compact set. 

We now give the less-known identities in the form of several lemmas.
\begin{lemma}\label{IsJ0Integral}
We have 
\begin{align*}
\int_0^{\infty}\frac{\xi}{\xi^2+1}I_s\left(\frac{a }{\xi^2+1}\right)J_0\left(\frac{a \xi}{\xi^2+1}\right)d\xi=\frac{a^s}{2^{s+1}s \Gamma(s+1)}
\end{align*}
for any $a>0$ and $s\in \C$ with $\Re(s)>0$.
\end{lemma}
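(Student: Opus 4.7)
My plan is to attack the identity directly by expanding the Bessel functions $I_s$ and $J_0$ into their power series, interchanging sum and integral, evaluating the resulting elementary integrals by Euler's Beta integral, and finally collapsing the double sum by a Vandermonde-type identity (namely \eqref{A8}).

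More precisely, I would first substitute the expansions \eqref{Iseries} and \eqref{Jseries}:
\begin{align*}
I_s\!\left(\tfrac{a}{\xi^2+1}\right) &= \sum_{k=0}^{\infty}\frac{(a/2)^{s+2k}}{k!\,\Gamma(s+k+1)}\,\frac{1}{(\xi^2+1)^{s+2k}},\\
J_0\!\left(\tfrac{a\xi}{\xi^2+1}\right) &= \sum_{\ell=0}^{\infty}\frac{(-1)^{\ell}(a/2)^{2\ell}}{(\ell!)^{2}}\,\frac{\xi^{2\ell}}{(\xi^2+1)^{2\ell}}.
\end{align*}
Both series converge absolutely and locally uniformly in $a$, and the resulting integrand is dominated uniformly in $a$ (on compact subsets) by $C\,\xi/(\xi^{2}+1)^{\Re(s)+1}$ near infinity and by a power of $\xi$ near zero, so Fubini applies and I may integrate termwise.

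After the interchange, the typical integral to evaluate is
\begin{align*}
\int_{0}^{\infty}\frac{\xi^{2\ell+1}}{(\xi^{2}+1)^{s+2k+2\ell+1}}\,d\xi
=\tfrac{1}{2}\int_{0}^{\infty}\frac{u^{\ell}}{(u+1)^{s+2k+2\ell+1}}\,du
=\frac{\ell!\,\Gamma(s+2k+\ell)}{2\,\Gamma(s+2k+2\ell+1)},
\end{align*}
where I used the substitution $u=\xi^{2}$ and formula \eqref{A7}. Plugging back, the integral on the left-hand side becomes
\begin{align*}
\frac{1}{2}\sum_{n=0}^{\infty}\frac{(a/2)^{s+2n}}{\Gamma(s+2n+1)}\,S_{n}(s),\qquad
S_{n}(s):=\sum_{\ell=0}^{n}\binom{n}{\ell}(-1)^{\ell}\,\frac{\Gamma(s+2n-\ell)}{n!\,\Gamma(s+n-\ell+1)},
\end{align*}
after collecting terms with the same total power $n=k+\ell$ of $a$ (setting $k=n-\ell$) and multiplying/dividing by $n!$. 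Reindexing $m=n-\ell$, the inner sum $S_n(s)$ is, up to an overall factor $\frac{1}{n!}$, exactly of the form appearing in identity \eqref{A8} with parameters $b=s+n$ and $a=s+1$, and that identity immediately gives
\begin{align*}
\sum_{m=0}^{n}\binom{n}{m}(-1)^{m}\frac{\Gamma(m+s+n)}{\Gamma(m+s+1)}
=\frac{\Gamma(s+n)}{\Gamma(1-n)\,\Gamma(s+n+1)},
\end{align*}
which vanishes for every $n\ge 1$ because $\Gamma(1-n)^{-1}=0$, while for $n=0$ it equals $1/s$. Only the $n=0$ term survives, and it produces
\begin{align*}
\frac{1}{2}\cdot\frac{(a/2)^{s}}{\Gamma(s+1)}\cdot\frac{1}{s}
=\frac{a^{s}}{2^{s+1}s\,\Gamma(s+1)},
\end{align*}
which is the claimed value. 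The main obstacle is really just the bookkeeping of the double sum and recognizing the polynomial-in-$m$ shape of $\Gamma(s+n+m)/\Gamma(s+m+1)$ so that \eqref{A8} applies cleanly; the analytic justifications (absolute convergence and Fubini) are routine given the exponential decay of $\xi/(\xi^{2}+1)^{s+1}$ at infinity and the standard estimates for $I_s$ and $J_0$.
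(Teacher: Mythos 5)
Your proof is correct and follows essentially the same route as the paper's own proof: expand both Bessel functions via \eqref{Iseries} and \eqref{Jseries}, integrate termwise using the Beta integral \eqref{A7} after the substitution $u=\xi^{2}$, collect terms with $n=k+\ell$, and annihilate every $n\geq 1$ term with the Vandermonde-type identity \eqref{A8}, leaving only the $n=0$ contribution $1/s$. (One cosmetic slip: the integrand decays only polynomially, like $\xi^{-2\Re(s)-1}$, at infinity, not exponentially, but that is all the integrability the Fubini step needs.)
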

\begin{proof}
Using formulas \eqref{Iseries} and \eqref{Jseries}, we have
\begin{align*}
  &\int_0^{\infty}\frac{\xi}{\xi^2+1}I_s\left(\frac{a }{\xi^2+1}\right)J_0\left(\frac{a \xi}{\xi^2+1}\right)d\xi\\
 &=\left(\frac{a}{2}\right)^s\sum_{k,j=0}^{\infty}\frac{(-1)^ja^{2k+2j}}{4^{k+j}k! j!\, \Gamma(s+k+1)\Gamma(j+1)}\int_0^{\infty}\frac{\xi^{2j+1}}{(\xi^2+1)^{2k+2j+s+1}}d\xi.
\end{align*}
By putting $x=\xi^2$, $n=k+j$, and using formula \eqref{A7}, we see that this equals
\begin{align*}
  \frac{1}{2}\left(\frac{a}{2}\right)^s\sum_{n=0}^{\infty}\frac{a^{2n}}{4^{n}n!\,\Gamma(2n+s+1)}
  \sum_{j=0}^n{n \choose j}\frac{(-1)^j\,\Gamma(s+2n-j)}{ \Gamma(s+n-j+1)}.
\end{align*}
By formula \eqref{A8} we have
\begin{align*}
\sum_{j=0}^n{n \choose j}\frac{(-1)^j\,
\Gamma(s+2n-j)}{ \Gamma(s+n-j+1)}=(-1)^n\sum_{j=0}^n{n \choose j}
\frac{(-1)^j\Gamma(s+n+j)}{ \Gamma(s+j+1)}
=
\begin{cases}
s^{-1}, & \text{ if }n=0,\\
0, & \text{ if }n\geq 1.
\end{cases}
\end{align*}
Replacing this in the previous expression gives the desired formula.
\end{proof}

\begin{lemma}\label{Krecursion}
For $s,z\in \mathbb{C}$, we have
\begin{align*}
\sum_{\ell=0}^j {j \choose \ell}\frac{(-z/2)^{\ell}K_{s+j+\ell}(z)}{  \Gamma(s+\ell+1)}
=\frac{(-z/2)^j K_s(z)}{\Gamma(s+j+1)}.
\end{align*}
\end{lemma}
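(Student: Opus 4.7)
The plan is to reduce the identity to a single clean integration-by-parts computation by combining the Laplace-type integral representation of $K_\nu$ with Rodrigues' formula for the generalized Laguerre polynomials. My first step would be to observe the elementary identity
$$\sum_{\ell=0}^{j}\binom{j}{\ell}\frac{(-u)^\ell}{\Gamma(s+\ell+1)} \;=\; \frac{j!}{\Gamma(s+j+1)}\,L_{j}^{(s)}(u),$$
which follows directly from the series definition of $L_{j}^{(s)}$. Substituting the representation $K_\nu(z) = \tfrac{1}{2}(z/2)^\nu\int_0^\infty t^{-\nu-1}e^{-t-z^2/(4t)}\,dt$ for each $K_{s+j+\ell}(z)$ on the left-hand side, exchanging sum and integral, and rescaling the integration variable via $u = z^2/(4t)$, the entire left-hand side becomes a universal constant times
$$\int_0^\infty u^{s+j-1}\,L_{j}^{(s)}(u)\,e^{-u-z^2/(4u)}\,du.$$

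Next I would apply Rodrigues' formula $L_{j}^{(s)}(u) = (j!)^{-1}u^{-s}e^{u}(d/du)^j\!\left[u^{j+s}e^{-u}\right]$ to the integrand and integrate by parts $j$ times, shifting all derivatives onto the factor $u^{j-1}e^{-z^2/(4u)}$. The boundary terms vanish because $e^{-z^2/(4u)}$ decays super-exponentially at $u = 0^{+}$ (assuming $\Re(z^2)>0$, a restriction that I will remove at the end by analytic continuation) while $u^{j+s}e^{-u}$ and its derivatives decay exponentially at infinity.

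The main obstacle, which I expect to be the crux of the argument, is the derivative identity
$$\frac{d^j}{du^j}\!\left[u^{j-1}e^{-z^2/(4u)}\right] \;=\; \frac{(z/2)^{2j}}{u^{j+1}}\,e^{-z^2/(4u)}.$$
I would prove this directly by expanding $e^{-z^2/(4u)} = \sum_{k\ge 0}(-z^2/4)^k u^{-k}/k!$ and differentiating term by term. The $k$-th summand contributes $(-z^2/4)^k (k!)^{-1}u^{j-1-k}$, whose $j$-th derivative is a falling-factorial product of $j$ consecutive integers from $-k$ to $j-1-k$. For $0\le k\le j-1$ the integer $0$ lies in this range, killing the term; for $k\ge j$ the product collapses to $(-1)^j k!/(k-j)!$, and the reindexing $m = k-j$ reassembles the surviving terms into $(z/2)^{2j}u^{-j-1}e^{-z^2/(4u)}$, as required.

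Substituting this derivative identity back into the integrated-by-parts expression reduces the whole thing to $(-1)^j(z/2)^{2j}(j!)^{-1}\int_0^\infty u^{s-1}e^{-u-z^2/(4u)}\,du$, which by the same integral representation equals $2(-1)^j(z/2)^{s+2j}K_s(z)/j!$. Collecting all constants accumulated during the scaling produces exactly $(-z/2)^jK_s(z)/\Gamma(s+j+1)$, matching the right-hand side of the lemma. Finally, since both sides of the claimed identity are entire in $s$ and holomorphic in $z$ on the domain of $K_\nu$, the temporary hypothesis $\Re(z^2)>0$ (and any auxiliary hypothesis on $\Re(s)$ needed to justify the interchange of sum and integral) can be removed by analytic continuation, establishing the identity for all $s,z\in\mathbb{C}$.
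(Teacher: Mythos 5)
Your proposal is correct, but it takes a genuinely different route from the paper, which disposes of the lemma by a short induction on $j$ using the three-term recurrence $K_{s+1}(z)=K_{s-1}(z)+\tfrac{2s}{z}K_s(z)$. You instead recognize the binomial sum $\sum_{\ell}\binom{j}{\ell}(-u)^{\ell}/\Gamma(s+\ell+1)$ as $j!\,L_j^{(s)}(u)/\Gamma(s+j+1)$, pass to the Laplace-type integral representation of $K_\nu$, and reduce everything via Rodrigues' formula and $j$-fold integration by parts to the derivative identity $\frac{d^j}{du^j}\bigl[u^{j-1}e^{-z^2/(4u)}\bigr]=(z/2)^{2j}u^{-j-1}e^{-z^2/(4u)}$, which you verify correctly by term-by-term differentiation of the power series in $1/u$; the bookkeeping of constants at the end does produce exactly $(-z/2)^jK_s(z)/\Gamma(s+j+1)$, and the final analytic continuation in $z$ (and $s$) to remove the temporary hypothesis $\Re(z^2)>0$ is legitimate since both sides are entire in $s$ and holomorphic in $z$ off the cut. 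What your approach buys is a structural explanation: the identity is the statement that the kernel $e^{-z^2/(4u)}$ reproduces itself (up to the factor $(z/2)^{2j}u^{-j-1}$) under the Laguerre--Rodrigues differential operator, and it also exposes the Laguerre polynomial hiding in the coefficient sum. What it costs is length and the need to police convergence, boundary terms, and branch restrictions, all of which the paper's two-line induction avoids entirely.
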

\begin{proof}
This identity can be proved by induction over $j$, the case $j=0$ being obvious. 
For the inductive step one can use the identity
$$K_{s+1}(z)=K_{s-1}(z)+\frac{2s}{z}K_s(z)$$
(see formula 8.468-10 in \cite{GRA}).
We omit the details. 
\end{proof}

\begin{lemma}\label{Jproductlemma}
Assume that $A\in \C\setminus ]-\infty,0]$, $\Re(A^2)=1/2$, and $x>0$. Then
$$J_s(Ax)J_s(\overline{A}x)=\sum_{n=0}^{\infty} \frac{\left(x|A|^2/2\right)^{s+2n}}{n! \,\Gamma(s+n+1)}J_{s+2n}(x)$$
and similarly
$$I_s(Ax)I_s(\overline{A}x)=\sum_{n=0}^{\infty} \frac{\left(x|A|^2/2\right)^{s+2n}}{n! \,\Gamma(s+n+1)}I_{s+2n}(x).$$
\end{lemma}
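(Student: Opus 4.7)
The plan is to reduce both the $J_s$ and $I_s$ identities to a single algebraic identity by matching power-series coefficients in $x$, and then to prove that algebraic identity by evaluating Weber's second exponential integral in two different ways.

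First, using the series \eqref{Jseries} and \eqref{Iseries}, both sides of either identity are entire functions of $x$ whose expansions involve powers of $(x/2)^{2s+2N}$ only. For the $J_s$ case, the coefficient of $|A|^{2s}(x/2)^{2s+2N}$ on the left-hand side is $(-1)^N\sum_{k+m=N}\tfrac{A^{2k}\overline{A}^{2m}}{k!\,m!\,\Gamma(s+k+1)\Gamma(s+m+1)}$, while on the right-hand side it is $(-1)^N\Gamma(s+N+1)^{-1}\sum_{n=0}^{\lfloor N/2\rfloor}\tfrac{|A|^{4n}}{n!\,(N-2n)!\,\Gamma(s+n+1)}$. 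The analogous computation for the $I_s$ case (differing only in the absence of the alternating sign factors of \eqref{Jseries}) produces the same algebraic identity. Hence both cases of the lemma reduce to verifying, for every $N\geq 0$,
\begin{equation*}
\sum_{k+m=N}\frac{A^{2k}\overline{A}^{2m}}{k!\,m!\,\Gamma(s+k+1)\Gamma(s+m+1)}
= \frac{1}{\Gamma(s+N+1)}\sum_{n=0}^{\lfloor N/2\rfloor}\frac{|A|^{4n}}{n!\,(N-2n)!\,\Gamma(s+n+1)},
\end{equation*}
under the hypothesis $A^2+\overline{A}^2 = 1$ (equivalently $\Re(A^2) = 1/2$).

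To prove this algebraic identity, I would invoke Weber's second exponential integral (see, e.g., \cite{GRA}, formula 6.633-2): for $\Re(s) > -1$ and $\beta,\gamma \in \C$,
\begin{equation*}
\int_0^{\infty} e^{-t}\,J_s(2\beta\sqrt{t})\,J_s(2\gamma\sqrt{t})\,dt \;=\; e^{-\beta^2-\gamma^2}\,I_s(2\beta\gamma).
\end{equation*}
Specializing to $\beta = A\sqrt{u}$ and $\gamma = \overline{A}\sqrt{u}$ with an auxiliary parameter $u > 0$, and using $A^2+\overline{A}^2 = 1$ together with $A\overline{A} = |A|^2$, the right-hand side collapses to $e^{-u}I_s(2|A|^2 u)$. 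On the other hand, expanding the integrand on the left via \eqref{Jseries} and integrating term by term using $\int_0^\infty e^{-t}t^{s+k+m}\,dt = \Gamma(s+k+m+1)$ yields a double power series whose coefficient of $u^N$ is $|A|^{2s}(-1)^N\sum_{k+m=N}A^{2k}\overline{A}^{2m}\Gamma(s+N+1)/[k!\,m!\,\Gamma(s+k+1)\Gamma(s+m+1)]$. Expanding $e^{-u}I_s(2|A|^2 u)$ as a Cauchy product of the exponential series and \eqref{Iseries} gives the coefficient of $u^N$ as $|A|^{2s}(-1)^N\sum_{n=0}^{\lfloor N/2\rfloor}|A|^{4n}/[(N-2n)!\,n!\,\Gamma(s+n+1)]$. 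Equating these and dividing through by $\Gamma(s+N+1)$ produces exactly the required algebraic identity.

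The main anticipated obstacle is the termwise interchange of summation and integration in the expansion of Weber's integral, but absolute convergence follows readily from the bound \eqref{Jinfinity}, the exponential decay of $e^{-t}$, and a Stirling-type estimate on $\Gamma(s+k+m+1)$, so this amounts to routine bookkeeping. A secondary concern is the consistency of branch choices: the hypothesis $A\in\C\setminus ]-\infty,0]$ guarantees that $A^s$ and $\overline{A}^s$ are unambiguously defined via the principal branch of the logarithm, and combined with $x>0$ this renders $(Ax/2)^{s+2k}$ well-defined, so the coefficient extraction and matching are meaningful throughout.
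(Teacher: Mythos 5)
Your proposal is correct, and its first half coincides with the paper's own argument: both expand via \eqref{Jseries} and \eqref{Iseries}, collect the coefficient of $|A|^{2s}(x/2)^{2s+2N}$ on each side, and thereby reduce both Bessel identities to the single algebraic identity
\begin{equation*}
\sum_{k+m=N}\frac{A^{2k}\overline{A}^{2m}}{k!\,m!\,\Gamma(s+k+1)\Gamma(s+m+1)}
=\frac{1}{\Gamma(s+N+1)}\sum_{n=0}^{\lfloor N/2\rfloor}\frac{|A|^{4n}}{n!\,(N-2n)!\,\Gamma(s+n+1)}
\end{equation*}
under $A^{2}+\overline{A}^{2}=1$. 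Where you genuinely diverge is in the proof of this identity. The paper recognizes the two sides as terminating hypergeometric sums, namely $(1-a)^{N}\,{}_2F_1\bigl(-s-N,-N;s+1;a/(1-a)\bigr)$ and ${}_2F_1\bigl((1-N)/2,-N/2;s+1;4a(1-a)\bigr)$ with $a=A^{2}$, and concludes by the quadratic transformation \eqref{A9}. You instead specialize Weber's second exponential integral at $\beta=A\sqrt{u}$, $\gamma=\overline{A}\sqrt{u}$, use $\Re(A^{2})=1/2$ to collapse the closed form to $e^{-u}I_s(2|A|^{2}u)$, and read off the identity by comparing coefficients of $u^{s+N}$; this is a clean generating-function argument, the termwise integration is justified exactly as you indicate, and the identity, once obtained for $\Re(s)>-1$, extends to all $s$ since both sides are rational in $s$. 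The trade-off is mostly aesthetic: the paper's route is self-contained modulo one classical ${}_2F_1$ transformation, whereas yours outsources the combinatorics to Weber's formula (whose textbook proof runs through very similar series manipulations), in exchange for avoiding any explicit hypergeometric bookkeeping.
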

\begin{proof}
By using formula \eqref{Jseries} and collecting the powers of $x$, we have
\begin{align*}
J_s(Ax)J_s(\overline{A}x)
= \sum_{\ell=0}^{\infty}\frac{1}{\ell!}\Bigl(-\frac{x^2}{4}\Bigr)^{\ell}\sum_{n=0}^{\ell} {\ell \choose n}\frac{A^{2n}\overline{A}^{2\ell-2n}}{\Gamma(s+n+1)\Gamma(s+\ell-n+1)}
\end{align*}
and
\begin{align*}
\sum_{n=0}^{\infty} \frac{\left(x|A|^2/2\right)^{s+2n}}{n! \,\Gamma(s+n+1)}J_{s+2n}(x)
= \sum_{\ell=0}^{\infty}\Bigl(-\frac{x^2}{4}\Bigr)^{\ell}\frac{1}{\Gamma(s+\ell+1)}\sum_{n=0}^{\left[\ell/2\right]} \frac{|A|^{4 n}}{n! (\ell-2n)!\, \Gamma(s+n+1)},
\end{align*}
where $[\ell/2]$ denotes the integral part of $\ell/2$. Putting $a=A^2$ and using that $\overline{a}=1-a$, we get
\begin{align*}
\sum_{n=0}^{\ell}{\ell \choose n} \frac{A^{2n}\overline{A}^{2\ell-2n}}{\Gamma(s+n+1)\Gamma(s+\ell-n+1)}
= \frac{(1-a)^{\ell}}{\Gamma(s+1)\Gamma(s+1+\ell)}\  _2F_1\left(-s-\ell,-\ell;s+1;\frac{a}{1-a}\right)
\end{align*}
and
\begin{align*}
 \sum_{n=0}^{\left[\ell/2\right]} \frac{|A|^{4 n}}{n! (\ell-2n)! \,\Gamma(s+n+1)}
 =\frac{1}{\ell!\,\Gamma(s+1)} \ _2F_1\left(\frac{1-\ell}{2},-\frac{\ell}{2};s+1;4a(1-a)\right).
\end{align*}
Thus, we just have to prove the 
identity 
\begin{align*}
(1-a)^{\ell}\ _2F_1\left(-s-\ell,-\ell;s+1;\frac{a}{1-a}\right)=\ _2F_1\left(\frac{1-\ell}{2},-\frac{\ell}{2};s+1;4a(1-a)\right).
\end{align*}
This follows from identity \eqref{A9} applied to $a=-\ell$, $b=-s-\ell$, and $z=a/(1-a)$. This completes the 
proof of the first formula. The proof of the second formula is completely analogous, so we omit the 
details.
\end{proof}

\end{document}